\documentclass[10 pt]{amsart}

\usepackage{amssymb} \usepackage{amsfonts} \usepackage{amsmath}
\usepackage{amsthm} \usepackage{epsfig} 
\usepackage{color}
\usepackage{pinlabel}
\usepackage{amscd}
\usepackage[all]{xy}
\usepackage{pinlabel}
\usepackage{tcolorbox}
\usepackage{tikz}
\usepackage{hyperref}
\usetikzlibrary{matrix,arrows,decorations.pathmorphing}


\newtheorem{lemma}{Lemma}
\newtheorem{teo}[lemma]{Theorem}
\newtheorem{prop}[lemma]{Proposition} 
\newtheorem{cor}[lemma]{Corollary} 
\newtheorem{conj}[lemma]{Conjecture}

\theoremstyle{definition}
\newtheorem{defn}[lemma]{Definition}
\newtheorem{fact}[lemma]{Fact}
\newtheorem{quest}[lemma]{Question}

\theoremstyle{remark}
\newtheorem{rem}[lemma]{Remark}

\newcommand{\Iso}{{\rm Isom}}

\footskip=35pt \linespread{1.20}

\newcommand{\matX}{\ensuremath {\mathbb{X}}}

\newcommand{\matR} {\ensuremath {\mathbb{R}}}

\newcommand{\matZ} {\ensuremath {\mathbb{Z}}}

\newcommand{\matH} {\ensuremath {\mathbb{H}}}

\newcommand{\matE} {\ensuremath {\mathbb{E}}}

\newcommand{\id} {\ensuremath {{\rm id}}}

\newcommand{\Isom} {\ensuremath {{\rm Isom}}}
\newcommand{\lk} {\ensuremath {{\rm link}}}

\usepackage{mathtools}
\DeclarePairedDelimiter{\set}{ \{ }{ \} }
\DeclareMathOperator{\Id}{Id}
\DeclareMathOperator{\agg}{Ad}
\DeclarePairedDelimiter{\tonde}{(}{)}
\DeclarePairedDelimiter{\quadre}{[}{]}
\DeclarePairedDelimiter{\spam}{\langle}{\rangle}

\usepackage{xfrac}

\usepackage{epstopdf}

\newcommand{\nota} [1] {\caption{\footnotesize{#1}}}

\DeclareGraphicsRule{.pdftex}{pdf}{.pdftex}{}

\renewcommand{\tilde}{\widetilde}

\title{Infinitesimal Rigidity for Cubulated Manifolds}
\author{Ludovico Battista}

\begin{document}

\maketitle

\begin{abstract}
    We prove the infinitesimal rigidity of some geometrically infinite hyperbolic 4- and 5-manifolds. These examples arise as infinite cyclic coverings of finite-volume hyperbolic manifolds obtained by colouring right-angled polytopes, already described in the papers \cite{BM,itamarmig}. The 5-dimensional example is diffeomorphic to $N\times \matR$ for some aspherical 4-manifold $N$ which does not admit any hyperbolic structure. To this purpose we develop a general strategy to study the infinitesimal rigidity of cyclic coverings of manifolds obtained by colouring right-angled polytopes.
\end{abstract}

\section{Introduction} \label{rigidity:section}

The deformations of hyperbolic structures form an intensively studied phenomenon. This study led to a wide variety of interesting results, among which the Hyperbolic Dehn Filling theorem in dimension 3 stands out.

The finite-volume case is the most classical one and there are several results on it. In particular it is known that hyperbolic surfaces can always be deformed, hyperbolic three-manifolds admit only non-complete deformations when non-compact and none if they are compact, and starting from dimension 4 there are no infinitesimal deformations (hence there are no deformations). 

In this article we focus on geometrically infinite (hence not finite-volume) hyperbolic manifolds. Recall that a hyperbolic manifold is called \emph{geometrically finite} if its convex core (see \cite{libroM}) has finite volume, otherwise it is called \emph{geometrically infinite}. We prove for the first time -to the best of our knowledge- the existence of rigid geometrically infinite 4- and 5-manifolds. This result is in contrast with lower dimensions: from the (now proved \cite{BrBr, B, NS}) density conjecture it follows that every geometrically infinite 3-manifold can be deformed into a geometrically finite one.

We recall some basic facts about deformations. A nice presentation of this subject can be found in \cite{CHK}.

Given a manifold with a hyperbolic metric $(M,g)$ it is possible to associate to it a holonomy $\rho$:
\[ \rho \colon \pi_1(M) \to \Isom(\matH^n) . \]
The holonomy is defined only up to inner automorphisms of $\Isom(\matH^n)$, but we often denote by $\rho$ a choice of one representative in its equivalence class.

A \emph{deformation of} $\rho$ is a smooth path of representations $\rho_t$, with $t$ varying in an interval $[0,1)$, such that $\rho_0=\rho$. Deformations of the holonomy are strictly connected with deformations of the metric $g$: when $M$ is the interior of a compact manifold, the Ehresmann-Thurston Principle (see \cite{BG}, Theorem 2.1) states that, if $\epsilon$ is small enough, for every $\rho_t$ with $ t \in [0, \epsilon)$ there is a hyperbolic metric $g_t$ on $M$ such that the associated holonomy is $\rho_t$.

The \emph{infinitesimal deformations} (we will be more precise in Section \ref{Infinitesimal_deformation:section}) are the first order solutions to the equations for the existence of deformations for a holonomy $\rho$, quotiented by the directions given by conjugations in $\Isom(\matH^n)$. We say that a manifold is \emph{infinitesimally rigid} if its infinitesimal deformations vanish. The Weil's Lemma \cite{We} states that the holonomy $\rho$ of a infinitesimally rigid manifold can be deformed only through a path of conjugations in $\Isom(\matH^n)$. The main result of this paper is the following:

\begin{teo}\label{rigidity_existence:teo}
There exist one geometrically infinite hyperbolic 4-manifold and one geometrically infinite hyperbolic 5-manifolds that are infinitesimally rigid.
\end{teo}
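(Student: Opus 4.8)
The plan is to construct the examples explicitly as infinite cyclic coverings of finite-volume hyperbolic manifolds arising from right-angled polytopes (as in \cite{BM,itamarmig}), and then to reduce the infinitesimal rigidity of each covering to a finite, combinatorial computation on the underlying polytope. First I would set up the space of infinitesimal deformations: by the standard identification (see \cite{CHK}), an infinitesimal deformation of the holonomy $\rho\colon\pi_1(M)\to\Isom(\matH^n)$ is a class in the twisted cohomology $H^1(\pi_1(M);\mathfrak{so}(n,1)_{\agg\rho})$, and infinitesimal rigidity means this group vanishes. For a hyperbolic manifold that is the interior of a manifold with boundary (or, here, an infinite cyclic cover of such) the relevant object is really the cohomology with suitable support/decay conditions; I would first make precise, in the later Section \ref{Infinitesimal_deformation:section}, what class of cocycles is admissible on the non-compact cover $N\times\matR$.

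Second, I would exploit the cyclic covering structure. Write $\widehat M\to M$ for the $\matZ$-covering with deck group generated by $t$, so $\pi_1(\widehat M)=\ker(\phi)$ for the epimorphism $\phi\colon\pi_1(M)\to\matZ$ defining the cover. The $\mathfrak{so}(n,1)$-cohomology of $\widehat M$ is then computed by a Wang-type exact sequence relating $H^*(\widehat M)$ to $H^*(M)$ with coefficients in the module twisted additionally by the $\matZ$-action; equivalently one studies the Alexander-type module over $\matZ[t,t^{-1}]$ and shows that the piece surviving the decay condition vanishes. This is where the colouring of the right-angled polytope enters: the manifold $M$ is assembled from copies of a right-angled polytope $P$ glued along facets according to a colouring, so $\pi_1(M)$ has an explicit description and the cochain complex computing $H^*(M;\mathfrak{so}(n,1))$ is built from the combinatorics of $P$ and its face poset. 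I would therefore develop (this is the ``general strategy'' promised in the abstract) a reformulation of the cocycle and coboundary conditions as a linear system indexed by the facets, ridges, and lower-dimensional faces of $P$, together with the colouring data.

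Third, I would carry out the explicit computation for the two chosen examples: a specific right-angled $4$-polytope and a specific right-angled $5$-polytope (the ones already appearing in \cite{BM,itamarmig}, chosen so that the $5$-dimensional cyclic cover is diffeomorphic to $N\times\matR$ with $N$ aspherical and not hyperbolisable). Here one checks that the finite linear system has only the ``trivial'' solutions coming from global conjugation, so $H^1$ vanishes; this can in principle be reduced to a rank computation over a finite field or over $\matQ$, performed face by face and then assembled via a Mayer--Vietoris / spectral-sequence argument over the polytope's stratification. Finally, I would record the consequence via Weil's Lemma: vanishing of infinitesimal deformations forces every deformation of $\rho$ to be a path of conjugations, giving infinitesimal (hence local) rigidity of these geometrically infinite manifolds.

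The main obstacle I expect is the interplay between the non-compactness of $\widehat M$ and the cohomological computation: on a geometrically infinite manifold one cannot simply use $H^1$ of the group, since there are infinitely many ``ends'' worth of cohomology, and one must cut down to the cocycles satisfying the correct growth/decay condition along the $\matR$-direction — identifying exactly which submodule of the Alexander module this is, and proving that \emph{that} submodule vanishes (rather than the full, typically infinite-dimensional, cohomology), is the technical heart. A secondary difficulty is bookkeeping: organising the face-by-face linear algebra on a $4$- or $5$-dimensional right-angled polytope with a fixed colouring so that the computation is genuinely finite and verifiable, and making sure the passage from the polytope's local data to the global twisted cohomology of the cover (via the cyclic structure) loses no information.
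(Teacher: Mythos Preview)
Your proposal diverges from the paper's approach and contains a genuine gap together with a conceptual misunderstanding.

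\textbf{The decay/growth conditions are a red herring.} The paper's definition of infinitesimal rigidity is simply the vanishing of the ordinary group cohomology $H^1(\pi_1(\widetilde M),\mathfrak g_{\agg\rho})$; no support or decay condition is imposed, and none is needed. Your ``main obstacle'' --- identifying the correct submodule of the Alexander module and proving \emph{that} vanishes --- is self-imposed and rests on a misreading of the definition.

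\textbf{The Wang/Alexander route does not close.} From the Hochschild--Serre spectral sequence for $1\to\pi_1(\widetilde M)\to\pi_1(M)\to\matZ\to 1$, the known rigidity $H^1(\pi_1(M),\mathfrak g)=0$ of the finite-volume base only yields that the $\matZ$-\emph{invariants} of $H^1(\pi_1(\widetilde M),\mathfrak g)$ vanish; the coinvariants sit inside $H^2(\pi_1(M),\mathfrak g)$, which you have no control over, and even if both invariants and coinvariants vanished you would only know that $t-1$ acts bijectively, not that the module is zero. So your proposed passage from the combinatorics of $M$ up to $\widetilde M$ does not yield the desired vanishing without a further idea.

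\textbf{The missing idea.} The paper goes in the opposite direction: it produces a \emph{finite} subcomplex $F_{[m,n]}$ of the infinite dual cube complex $\widetilde C$ (a union of finitely many squares sitting over a bounded range of levels) whose inclusion into $\widetilde C$ is $\pi_1$-\emph{surjective}. This surjectivity is a consequence of Bestvina--Brady: connectedness of all ascending and descending links forces the singular fiber to carry all of $\pi_1(\widetilde C)$, and a short combinatorial argument pushes this into $F_{[m,n]}$. Since a surjection on $\pi_1$ induces an \emph{injection} on $H^1$ with twisted coefficients, it suffices to show $H^1(\pi_1(F_{[m,n]}),\mathfrak g_{\agg\rho'})=0$ for the pulled-back representation $\rho'$. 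That is now a finite linear-algebra problem: using the fundamental \emph{groupoid} based at all vertices of $F_{[m,n]}$ (so that every generator is sent to a single reflection of $P$), one writes down the cocycle conditions from edges and squares and computes the rank of the resulting system symbolically. No spectral sequence, no Alexander module, no decay condition --- just a $\pi_1$-surjective finite complex and a rank computation.
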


The examples we study are the infinite cyclic coverings of finite-volume hyperbolic manifolds described in \cite{BM, itamarmig}, see Sections \ref{dim_4:section} - \ref{dim_5:section} for the details. We briefly recall some of their nice topological properties.

Let $M$ be a compact $n$-manifold possibly with boundary with a complete finite-volume hyperbolic metric on its interior. A circle-valued Morse function on $M$ is a smooth map $f \colon M\to S^1$ such that $f|_{\partial M}$ has no critical points and $f$ has finitely many critical points, all of non degenerate type. We have
\begin{align}\label{eq:critical_points}
    \chi(M) = \sum (-1)^ic_i
\end{align}
where $c_i$ is the number of critical points of index $i$. We say that $f$ is \emph{perfect} if it has exactly $|\chi(M)|$ critical points, that is the least possible amount allowed by (\ref{eq:critical_points}).

When the dimension $n$ is odd, the Euler characteristic of $M$ vanishes, hence a perfect circle-valued Morse function is simply a fibration over $S^1$. This is never the case when $n$ is even since the Euler characteristic of $M$ never vanishes due to the Chern-Gauss-Bonnet theorem. When $n=4$, the map $f$ is perfect if and only if it has only critical points of index 2 (see \cite{BM}).

Given a perfect circle-valued Morse function $f \colon M \to S^1$, the \emph{infinite cyclic covering associated to $f$} is the smallest covering $\Tilde{M}$ of $M$ such that the following diagram commutes:
\[
\begin{tikzpicture}
  \matrix (m) [matrix of math nodes,row sep=3em,column sep=4em,minimum width=2em]
  {
     \Tilde{M} & \matR \\
     M & S^1 \\};
  \path[-stealth]
    (m-1-1) edge node [left] {} (m-2-1)
            edge node [above] {$\Tilde{f}$} (m-1-2)
    (m-2-1) edge node [above] {$f$} (m-2-2)
    (m-1-2) edge node {} (m-2-2);
\end{tikzpicture}.
\]
In other words $\Tilde{M}$ is the covering associated with $\ker(f_*)$ where $f_*$ is the map induced by $f$ on the fundamental groups.

The infinitesimally rigid manifolds of Theorem \ref{rigidity_existence:teo} are infinite cyclic coverings associated to perfect circle-valued Morse functions. Using the connection between Morse functions and handle decompositions we deduce the following:
\begin{itemize}
    \item the 4-manifold in Theorem \ref{rigidity_existence:teo} is diffeomorphic to $N \times[0,1]$ with infinitely many 2-handles attached on both sides, where $N$ is a finite-volume cusped hyperbolic 3-manifold; such a manifold was constructed in \cite{BM};
    \item the 5-manifold in Theorem \ref{rigidity_existence:teo} is diffeomorphic to $N \times \matR$, where $N$ is a non-hyperbolic aspherical 4-manifold diffeomorphic to the interior
    of a compact 4-manifold with boundary; such a manifold was constructed in \cite{itamarmig}, and the question of studying rigidity was raised in \cite{itamarmig}, Problem 29.
\end{itemize}

The perfect circle-valued Morse functions are defined on finite-volume manifolds built by colouring right-angled polytopes, with a technique already used by several authors (see \cite{KolSla} for an introduction). These manifolds are naturally homotopically-equivalent to a cube complex. In the past two years, perfect circle-valued Morse functions on such manifolds were discovered and studied in \cite{itamarmig} and \cite{BM}, following the work of Jankiewicz -- Norin -- Wise \cite{JNW} based on Bestvina -- Brady theory \cite{BB}. The cube complex structure lifts to the cyclic covering, giving a nice combinatorial description of such a infinite-volume manifold in terms of a periodic cube complex.
Our main contribution consists in finding a convenient way to implement computations on infinitesimal deformations and applying it.

All the results in this article are computer-assisted, using Sage and MATLAB. In the two cases of Theorem \ref{rigidity_existence:teo} we were able to do all the computations using symbolic calculus, ending up with rigorous results. We applied our algorithm using double-precision numbers to several other geometrically infinite hyperbolic 4-manifolds, and in all these cases we found strong numerical evidences of infinitesimal rigidity. Theoretically, it is possible to promote every numerical result to a rigorous one with the necessary amount of time and computer resources.

\subsection*{Structure of the paper} In Section \ref{colostates:section} we recall the notions of colourings and states, that were used in \cite{BM} and \cite{itamarmig} to build manifolds with convenient circle-valued functions.

In Section \ref{infinite_cyclic_cover:section} we describe the combinatorial structure of the infinite cyclic coverings of such manifolds. We used these objects to prove Theorem \ref{rigidity_existence:teo}.

In Section \ref{Infinitesimal_deformation:section} we recall some notions about infinitesimal deformations and we build some machinery to compute their dimension in our cases.

In Section \ref{numerical:section} we describe the results obtained applying our algorithm.

\subsection*{Acknowledgements} I thank Bruno Martelli for the many ideas and for Figures \ref{squares:fig} and \ref{cyc_cover:fig}, Matteo Cacciola and Leonardo Robol for the insights on the numerical analysis, Viola Giovannini and Diego Santoro for the valuable discussions. I also thank the \emph{Centro Dipartimentale di Calcolo Scientifico e Nuove Tecnologie per la Didattica} of the Dipartimento di Matematica dell'Università di Pisa for letting me use the server for the computations\footnote{The server has been acquired thanks to the support of the University of Pisa, within the call "Bando per il cofinanziamento dell’acquisto di medio/grandi attrezzature scientifiche 2016".}.

\section{Colourings and States}\label{colostates:section}

Here we recall how to use colourings and states on a hyperbolic right-angled polytope to build a hyperbolic manifold with a circle-valued function on it. Since we are going to use it intensively, we will focus on the combinatorial description, and in particular on the aspects we will use more. A detailed and comprehensive presentation can be found in \cite{BM}.

\subsection{Right-angled polytopes}
A \emph{right-angled polytope} is a polytope in $\matX^n=\matE^n$ (the euclidean flat space) or $\matH^n$ (the hyperbolic space) with all dihedral angles of value $\sfrac{\pi}{2}$.

Such an object is naturally stratified in vertices (0-faces), edges (1-faces), 2-faces, \ldots, $(n-2)$-faces, facets ($(n-1)$-faces), and one $n$-face.

Let $\Gamma< \Iso(\matX^n)$ be the group generated by the reflections along the facets of $P$. The polytope $P$ is a fundamental domain for the action of $\Gamma$. A presentation for $\Gamma$ is 
$$\langle \ r_1, \ldots, r_s \ |\ r_i^2, [r_i,r_j]\ \rangle,$$
where $r_i$ is the reflection along the $i$-th facet of $P$ and we have the relation $[r_i,r_j]=0$ whenever the $i$-th and $j$-th facet are adjacent.
Sometimes we will consider $P$ as an orbifold $P=\matX^n/_\Gamma$.

\begin{table}
\begin{center}
\begin{tabular}{c||c|ccccc}
 \!\! & Lives in & Ideal Vertices & Real vertices & Facets\\
 \hline \hline
 \!\!  Octahedron & $\matH^3$ & 6 & 0 & 8 \\
 \!\!  $24$-cell & $\matH^4$ & 24 & 0 & 24 \\
 \!\!  $P_4$ & $\matH^4$ & 5 & 5 & 10 \\
 \!\!  $120$-cell & $\matH^4$ & 0 & 600 & 120 \\
 \!\!  $P_5$ & $\matH^5$ & 10 & 16 & 16 \\
\end{tabular}
\vspace{.2 cm}
\nota{Combinatorial data of the right-angled polytopes that we use.}
\label{data:table}
\end{center}
\end{table}

We recall some interesting examples of right-angled polytopes:
\begin{itemize}
    \item In dimension 2 there are regular right-angled $k$-gons for every $k \geq 5$ in the hyperbolic plane. 
    \item In dimension 3 there are two right-angled regular polyhedra in the hyperbolic space, one with ideal vertices (the octahedron) and one compact (the dodecaheron).
    \item In dimension 4 there are two right-angled regular polytopes in the hyperbolic space, one with ideal vertices (the 24-cell) and one compact (the 120-cell).
    \item There is a family of non-regular hyperbolic right-angled polytopes $P_3, \ldots, P_8$, where $P_i$ is a polytope in $\matH^i$, very nicely described in \cite{PLV}.
\end{itemize}
The combinatorics of the ones we will use can be found in Table \ref{data:table}.

There are many hyperbolic right-angled polytopes in low dimensions. The main reason for preferring those listed in Table \ref{data:table} is that these ones are reasonably small and have a huge number of symmetries that help a lot during computations.

\subsection{Colourings}
Colouring a right-angled polytope gives a way to build a manifold. The interested reader can find a general introduction in \cite{FKS}.

Pick a polytope $P$ and choose a palette of colours $ \set{1, \ldots, c}$. We assign to each facet of $P$ a colour in such a way that adjacent facets have different colours. See Figure \ref{square_coloured:fig}.

\begin{rem}\label{rem:diff_colours_intersection}
This is equivalent to requiring that when $k$ facets meet, they all have different colours: it follows from the fact that if some facets share a common sub-face (that is not an ideal vertex), they meet pairwise (see \cite{FKS}).
\end{rem}

\begin{figure}
 \begin{center}
  \includegraphics[width = 2 cm]{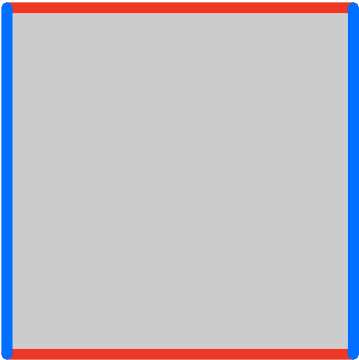}
 \end{center}
 \nota{A polytope with a colouring.}
 \label{square_coloured:fig}
\end{figure}

Using the colouring we build a manifold $M$ that is naturally tessellated into copies of $P$: we pick $2^c$ copies of $P$, denoted as $P_v$ with $v$ varying in $\tonde{\sfrac{\matZ}{2\matZ}}^c$. We then glue these polytopes along all their facets: the facet $F$ of $P_v$ is glued along the same facet of $P_{v+e_i}$ with the identity, where $i$ is the colour of $F$. See Figures \ref{torus_4_squares:fig} - \ref{torus_by_gluing:fig}.

A \emph{$k$-stratum} of $M$ is a $k$-face of any polytope $P_v$ inside $M$. Given the combinatorics of $P$, it is easy to compute the number of $k$-strata in $M$:

\begin{prop}\label{strata_number:prop}
The number of $k$-strata with $k \geq 1$ in $M$ is
\[ \#(\text{$k$-faces in $P$}) \cdot 2^{c-n+k}.\]
\end{prop}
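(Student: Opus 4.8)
The plan is to count \emph{ordered pairs} $(F,v)$, where $F$ is a $k$-face of $P$ and $v\in(\matZ/2\matZ)^c$. There are visibly $\#(k\text{-faces of }P)\cdot 2^c$ of them, and each such pair determines the copy $F_v$ of $F$ sitting inside the tile $P_v\subset M$. Since $M$ is tessellated by the tiles $P_v$, there are no identifications inside a single tile, so a $k$-stratum of $M$ is precisely an equivalence class of copies $F_v$ under the gluing relation. Hence it suffices to show that this relation partitions the pairs $(F,v)$ into classes of the same size $2^{n-k}$; dividing then gives the claimed count.

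To analyze the gluing I would first invoke that a right-angled polytope is \emph{simple away from its ideal vertices}: any $k$-face $F$ with $k\ge 1$ (so $F$ is not an ideal vertex) is the intersection of exactly $m:=n-k$ facets $F^{(1)},\dots,F^{(m)}$ of $P$. These $m$ facets share the common non-ideal subface $F$, so they meet pairwise (this is exactly the geometric input behind Remark \ref{rem:diff_colours_intersection}), and therefore the colouring assigns to them $m$ pairwise distinct colours $i_1,\dots,i_m$. Now a tile $P_v$ is glued to $P_{v+e_i}$ only along its facet of colour $i$, and that gluing is the identity; hence in a single gluing step $F_v$ can be identified only with $F_{v+e_i}$ for $i\in\{i_1,\dots,i_m\}$. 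It follows that $F_v$ and $F_{v'}$ represent the same $k$-stratum of $M$ if and only if $v-v'$ lies in the subgroup $H_F:=\spam{e_{i_1},\dots,e_{i_m}}\cong(\matZ/2\matZ)^m$. Thus the $k$-strata coming from a fixed face $F$ are in bijection with the cosets of $H_F$, of which there are $2^{c}/2^{m}=2^{c-n+k}$.

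Finally I would note that the gluings never mix distinct faces of $P$: each elementary gluing sends a copy of a face $F$ of $P$ to a copy of the \emph{same} face $F$. Therefore the $k$-strata arising from different $k$-faces of $P$ are genuinely different, and summing the count $2^{c-n+k}$ over the $\#(k\text{-faces of }P)$ faces of $P$ yields the formula. The only ingredient that is not pure bookkeeping on the group $(\matZ/2\matZ)^c$ is the simplicity of right-angled polytopes away from ideal vertices, i.e.\ that a $k$-face with $k\ge1$ meets exactly $n-k$ facets, pairwise; this is precisely where the hypothesis $k\ge 1$ is used, since at an ideal vertex the vertex figure is a higher-dimensional right-angled Euclidean polytope and simplicity can fail there.
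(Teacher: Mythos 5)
Your proof is correct and follows essentially the same approach as the paper's: count the $k$-faces in the disjoint union $\bigsqcup_v P_v$, note that each $k$-face with $k\ge 1$ lies in exactly $n-k$ facets of pairwise distinct colours, and deduce that the gluing identifies the copies in classes of size $2^{n-k}$. The paper states the class size and divides; you add the welcome precision that the classes are exactly the cosets of $H_F=\spam{e_{i_1},\dots,e_{i_m}}$ and that gluings never merge distinct faces of $P$, but this is the same argument spelled out in more detail rather than a different route.
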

\begin{proof}
Before gluing the facets, the number of $k$-faces in the collection $\set{P_v}$ is the number of $k$-faces of $P$ times $2^c$. When we identify the facets, the $k$-faces are glued together in groups of $2^{n-k}$. This is because every $k$-face is in the intersection of $n-k$ facets, and they all have different colours (see Remark \ref{rem:diff_colours_intersection}). We deduce that the number of $k$-strata in $M$ is:
\[ \#(\text{$k$-strata in $M$})= \frac{\#(\text{$k$-faces in $P$}) \cdot 2^c}{2^{n-k}} = \]\[ = \#(\text{$k$-faces in $P$}) \cdot 2^{c-n+k}.\]
\end{proof}

\begin{figure}
 \begin{center}
  \includegraphics[width = 4 cm]{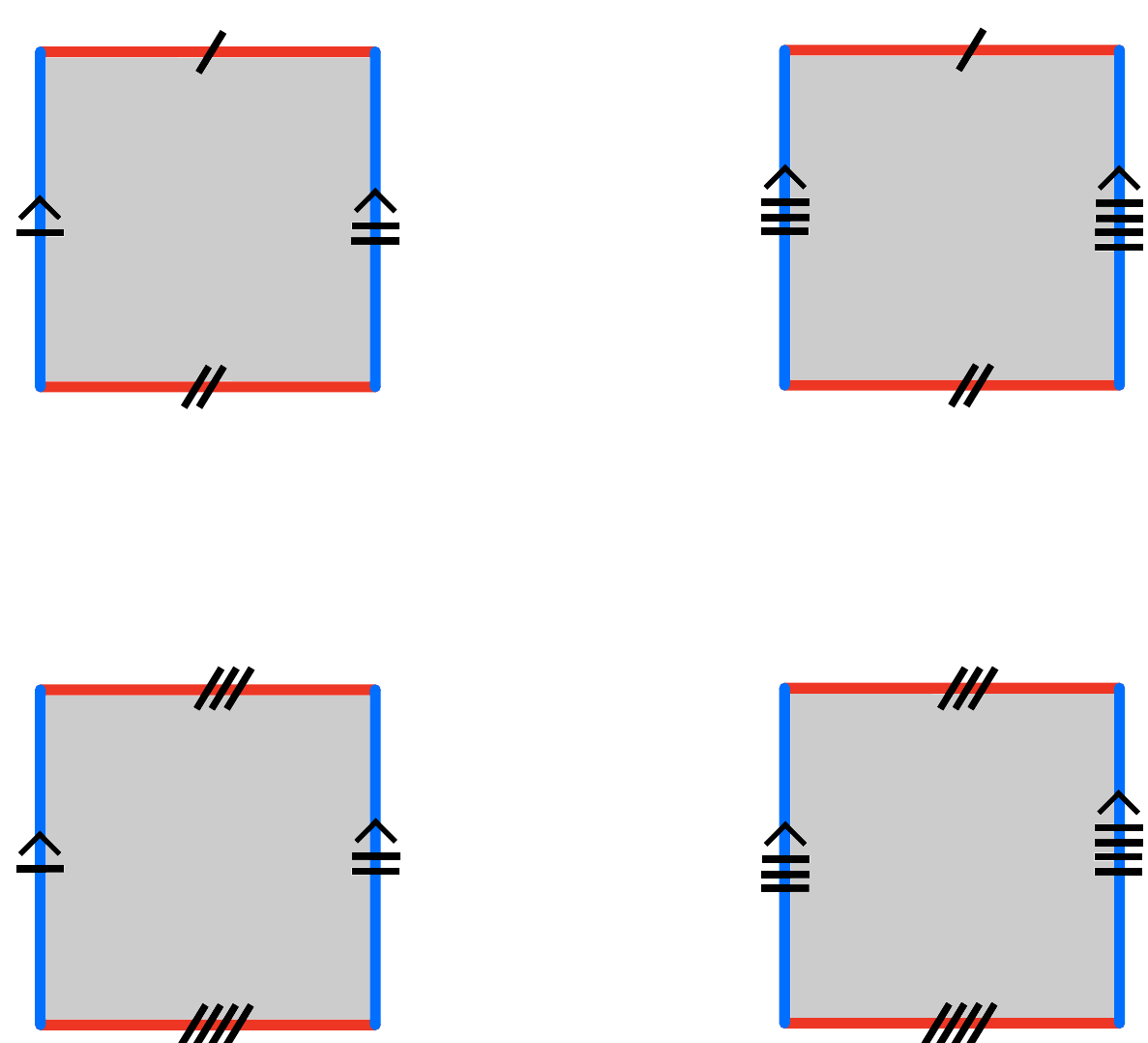}
 \end{center}
 \nota{How to build a manifold using a polytope and a colouring.}
 \label{torus_4_squares:fig}
\end{figure}

\begin{figure}
 \begin{center}
  \includegraphics[width = 4 cm]{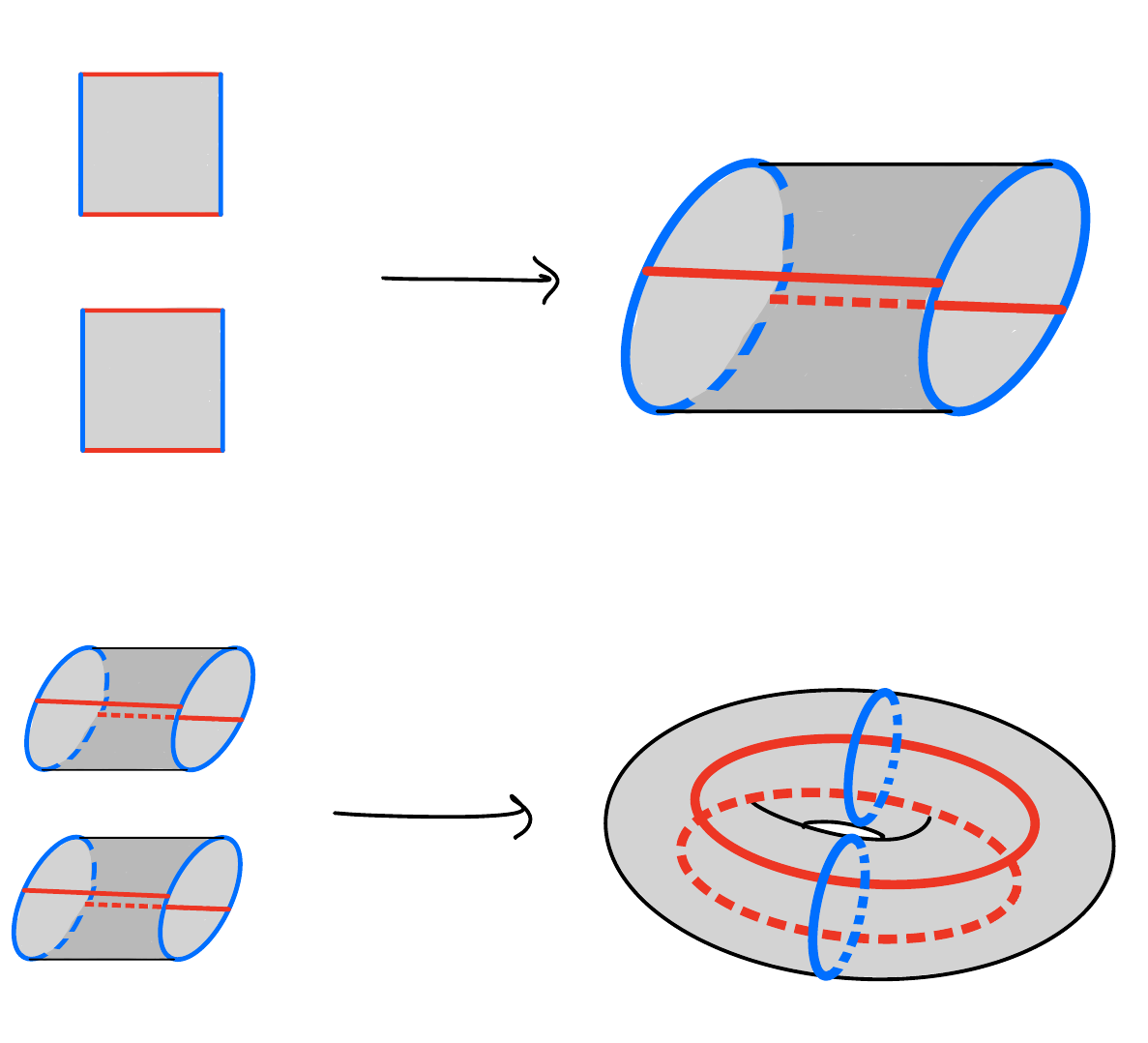}
 \end{center}
 \nota{A more visual way to identify the manifold $M$.}
 \label{torus_by_gluing:fig}
\end{figure}

\subsection{The dual cube complex}
The dual to the tessellation of $M$ is a cube complex $C$. Again, a formal definition of it can be found in \cite{BM}.

The cube complex $C$ is the core of all the constructions of the circle-valued functions. In this paper we focus on the 2-skeleton of $C$, since we are mostly interested in its fundamental group.

The cube complex $C$ can be obtained in the following way: 
\begin{itemize}
    \item we pick one vertex for each copy of the polytope $P$. We obtain $2^c$ vertices, and we denote by $x_v$ the vertex that corresponds to $P_v$, where $v$ is an element of $(\sfrac{\matZ}{2\matZ})^c$. We call an element $v \in (\sfrac{\matZ}{2\matZ})^c$ \emph{even} (\emph{odd}) if $\sum_{i=1}^c v_i$ is even (odd). A vertex $x_v$ is \emph{even} (\emph{odd}) if $v$ is even (odd);
    \item we add one edge between the vertices $x_v$ and $x_w$ for each facet separating $P_v$ and $P_w$. Notice that $v$ and $w$ differ only in one coordinate, so exactly one of them is even: one useful way to list all the edges is by considering all the facets of the polytopes $P_v$ where $v$ is even;
    \item we add one square with vertices $x_{v^1}$, $x_{v^2}$, $x_{v^3}$, and $x_{v^4}$ whenever $P_{v^1}$, $P_{v^2}$, $P_{v^3}$, and $P_{v^4}$ share a common codimension 2 face. Notice that $v^1$, $v^2$, $v^3$, and $v^4$ must be of the form $v$, $v+e_i$, $v+e_j$, and $v+e_i+e_j$. The edges of such a square can be deduced from the parity of the vertices $x_{v^i}$: there is an edge between vertices with different parity;
    \item we go on with all other faces.
\end{itemize}   

The number of $k$-cubes in $C$ is exactly the number of $(n-k)$-strata of the tessellation of $M$. See Figure \ref{dual_cube_complex:fig}.

\begin{figure}
 \begin{center}
  \includegraphics[width = 4 cm]{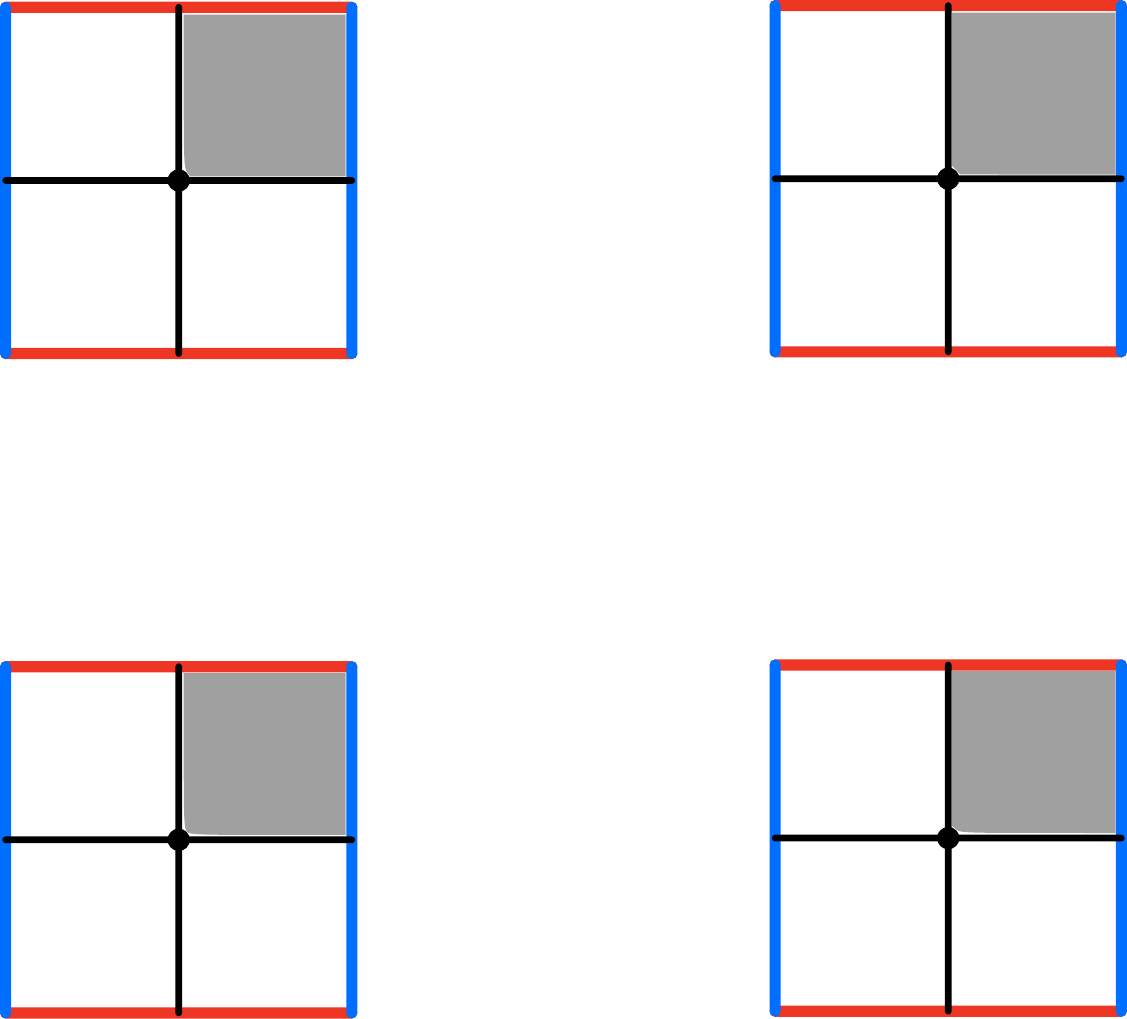}
 \end{center}
 \nota{The cube complex dual to the tessellation. Only one of the four squares is highlighted.}
 \label{dual_cube_complex:fig}
\end{figure}

When $P$ has no ideal vertices, the underlying space of $C$ is homeomorphic to $M$. When there are ideal vertices, the manifold $M$ deformation retracts onto $C$. In any case, the homotopy type of $M$ and $C$ is the same. 

We are interested in the holonomy of a covering of $M$:
\[ \Tilde{\rho} \colon \Tilde{M} \to \Isom(\matH^n). \]
The cube complex structure $C$ of $M$ will lift to a cube complex structure $\Tilde{C}$ on $\Tilde{M}$.

In the works \cite{itamarmig} and \cite{BM} the cube complex $C$ has been enlarged to a bigger cube complex when there are ideal vertices. This is because the authors wanted to define a circle-valued function on $M$. Here, we do not need to put any attention on this aspect, since only the homotopy type of the maps will matter.

\subsection{States}
A \emph{state} gives a way to define a map from $M$ to $S^1$. This notion was introduced in \cite{JNW}. We work on $C$ instead of $M$, disregarding the fact that $C$ is only a deformation retract of $M$ when $P$ has ideal vertices.

We want to build a map from $C$ to $S^1$. We send all the vertices to $1 \in S^1$. We then orient each edge. We use the orientation to identify each edge with the standard interval $I=[0,1]$, on which we have a natural function to $S^1$ (see Figure \ref{map_on_edges:fig}). While doing this, we want to make sure we will be able to extend the map on the 2-skeleton: every square of the 2-skeleton describes one obstruction to the extension, see Figure \ref{maps_on_squares:fig}. In particular, we need the image of the boundary of the square to be trivial in the fundamental group of $S^1$. If we manage to extend the function on the 2-skeleton, then the map can be defined on the whole $C$ (the $k$-skeleta with $k \geq 3$ do not provide any obstruction because $S^1$ is aspherical).

\begin{figure}
 \begin{center}
  \includegraphics[width = 6 cm]{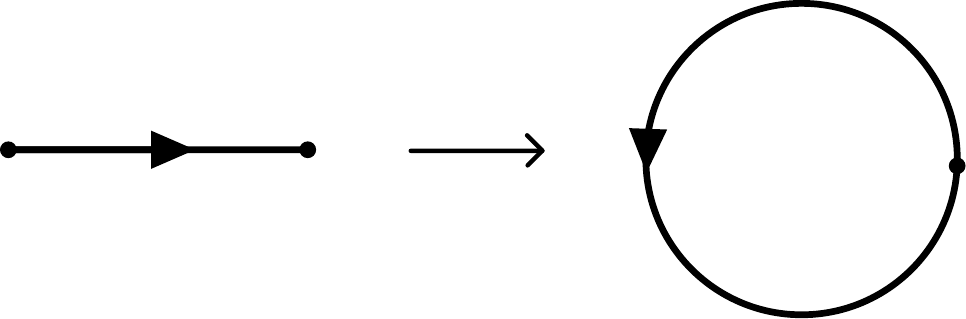}
 \end{center}
 \nota{Given an orientation on an edge, it is possible to identify it with the standard interval $[0,1]$, that is sent to $S^1$ by the quotient $\sfrac{[0,1]}{\set{0;1}}$.}
 \label{map_on_edges:fig}
\end{figure}

\begin{figure}
 \begin{center}
  \includegraphics[width = 10 cm]{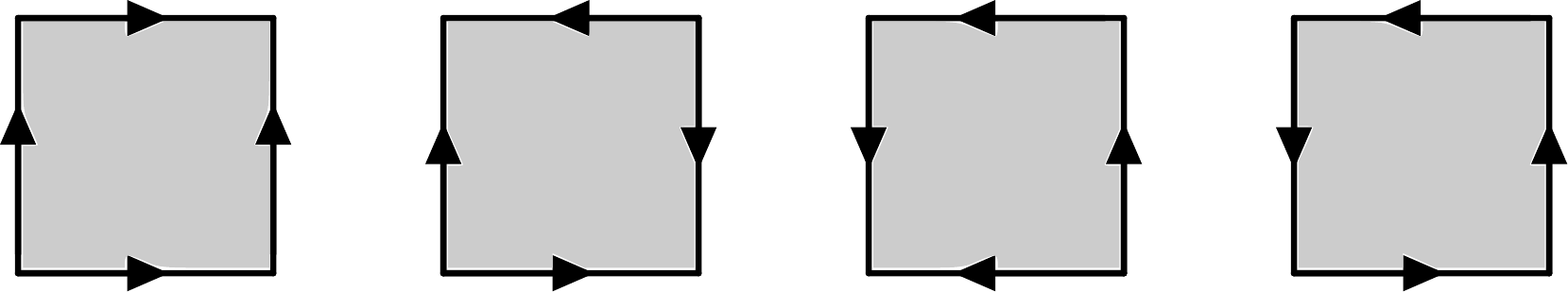}
 \end{center}
 \nota{The four possible maps that can be defined on the boundary of a square, up to equivalence. The first two can be extended on the interior of the square, the last two do not. We call the first square \emph{coherently oriented}, and the second one \emph{bad}.}
 \label{maps_on_squares:fig}
\end{figure}

To do this we use the notion of \emph{state} (see Figure \ref{square_state:fig}):
\begin{defn}
A \emph{state} of a polytope $P$ is the assignment of the letter "I" or the letter "O" to each of its facets.
\end{defn}
\begin{figure}
 \begin{center}
  \includegraphics[width = 2 cm]{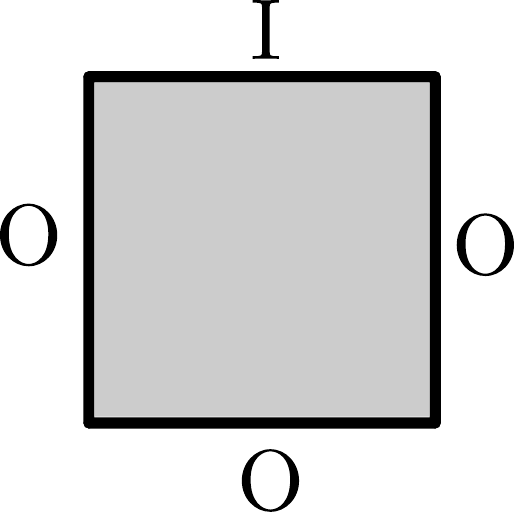}
 \end{center}
 \nota{A polytope with a state.}
 \label{square_state:fig}
\end{figure}

We choose a state $s_0$ on $P_0$. Consider the edges of $C$ with one endpoint in $x_0$. Each of these edges is dual to a facet of $P_0$. We orient an edge outward (inward) with respect to $x_0$ if the corresponding dual facet has letter "O" ("I"). We now want to orient all the other edges of $C$ making sure that we find a function that can be extended on the 2-skeleton. There are two ways of doing this, one used in \cite{BM} and one used in \cite{itamarmig}.

The first one is the following: we define the state $s_{e_{i_1}+ \ldots +e_{i_r}}$ to be $s_{0}$ where we swapped the letters of the facets that have colours in $\set{i_1, \ldots, i_r}$. Each $s_v$ defines an orientation on each edge with one endpoint in $x_v$. It is easy to check that this definition is well-posed. Furthermore, we can extend this map on the 2-skeleton. Suppose we have a square: it is the dual of the intersection of two facets of $P_v$ that have colours $i$ and $j$. The vertices of the cube are $v$, $v + e_i$, $v + e_j$, and $v + e_i + e_j$. Following the recipe, it is easy to see that the orientation of the edge $(v, v+e_i)$ is the same as the orientation $(v+e_j, v+e_j+e_i)$. The same holds for $(v, v+e_j)$ and $(v+e_i, v+e_i+e_j)$. Hence, opposite sides of the square have the same orientation (in this case, we say that the edges are oriented \emph{coherently}, see Figure \ref{maps_on_squares:fig}). This ensures that the map is homotopically trivial on the boundary of the square, hence it can be extended.

The second way comes from the following consideration: for a map defined on the boundary of the square to be trivial in homotopy, we do not really need opposite edges to be oriented coherently. There is also another possibility, as shown in Figure \ref{maps_on_squares:fig}. We call squares of this type \emph{bad}. In \cite{itamarmig} the authors partition the colours in disjoint pairs. The state $s_{v+e_i}$ is defined as the state $s_v$ where the letters of the facets with colour $i$ and the colour paired with $i$ are switched from "O" to "I" and viceversa. In this case we talk about \emph{paired colours}. To ensure that this process will produce a map on the 1-skeleton that can be extended on the 2-skeleton one has to check that every square will be either oriented coherently or a bad square.

\begin{rem}
In the papers \cite{BM, itamarmig}, the authors exploit some methods to control the critical points of the maps obtained. In particular, they exhibit perfect circle-valued Morse functions in dimension 4 (in \cite{BM}) and 5 (in \cite{itamarmig}).
\end{rem}

There is a convenient way to extend the map to the whole $C$. It is called \emph{diagonal map}. This is used to describe precisely the fiber of the map. We describe it rapidly.

\subsection{The diagonal map}\label{diag_map:subsection} We start with a definition:
\begin{defn}
A cube with oriented edges is \emph{coherently oriented} if every square inside it is coherently oriented. A cube complex with an orientation on the edges is \emph{coherently oriented} if every cube inside it is coherently oriented.
\end{defn}
When $C$ is coherently oriented, one can identify every cube with the standard cube in $\matR^n$ with the edges oriented as going outside from the origin (up to permutation of coordinates). On the standard cube $[0,1]^d$ we can define the map with values is $\sfrac{\matR}{\matZ} = S^1$:
\[ (x_1,\ldots,x_d) \mapsto \quadre*{\sum_{i=1}^d x_i}.\]
These maps extend the maps defined on the edges and glue together to a well-defined map on $C$ that we call \emph{the diagonal map}. 

We need some attention in the case we have some squares that are not oriented coherently. 
\begin{defn}
Let $Q_1$ and $Q_2$ be two cubes with oriented edges. The \emph{orientation induced on the edges of $Q_1 \times Q_2$} is the only one such that both projections preserve the orientation of the edges (see Figure \ref{induced_orientation:fig}).
\end{defn}

\begin{figure}
 \begin{center}
  \includegraphics[width = 3 cm]{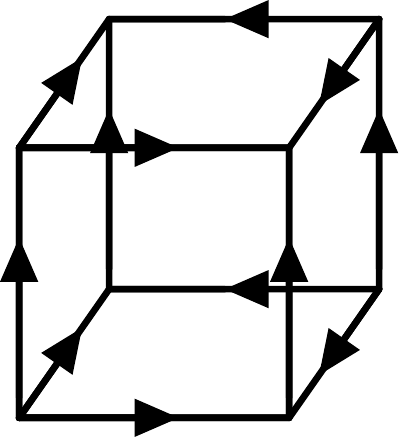}
 \end{center}
 \nota{The orientation induced on a 3-dimensional cube by a bad square times an interval.}
 \label{induced_orientation:fig}
\end{figure}

An useful property of an orientation on the edges of a cube is the following:
\begin{defn}\label{quasi_coherent:def}
A $d$-cube with oriented edges is \emph{quasi-coherently oriented} if one of the following holds:
\begin{itemize}
    \item it is coherently oriented;
    \item it has the orientation induced by a bad square times a coherently oriented $(d-2)$-cube.
\end{itemize}
A cube complex with an orientation on the edges is \emph{quasi-coherently oriented} if every cube inside it is quasi-coherently oriented.
\end{defn}

Suppose that we have a quasi-coherent orientation on $C$ (this is the case in \cite{itamarmig}). On the coherently oriented cubes we use the same map as before. On the other type of cubes, we divide the bad square $Q'$ in four triangles $T_i$ as in Figure \ref{square_in_triangles:fig}. We identify each triangle with a standard one with vertices (0,0), (1,0) and $(\frac12, \frac12)$, and we consider on it the projection $p$:
\[ (y_1, y_2) \mapsto y_1. \]
We then divide the cube in prisms $T_i \times Q$, whose factors are identified with the standard triangle and the standard $(d-2)$-cube. On every prism we can define a map in a way very similar to the previous case:
\[ ((y_1,y_2),(x_1,\ldots,x_{d-2})) \mapsto \quadre*{y_1 + \sum_{i=1}^d x_i}.\]

\begin{figure}
 \begin{center}
  \includegraphics[width = 8 cm]{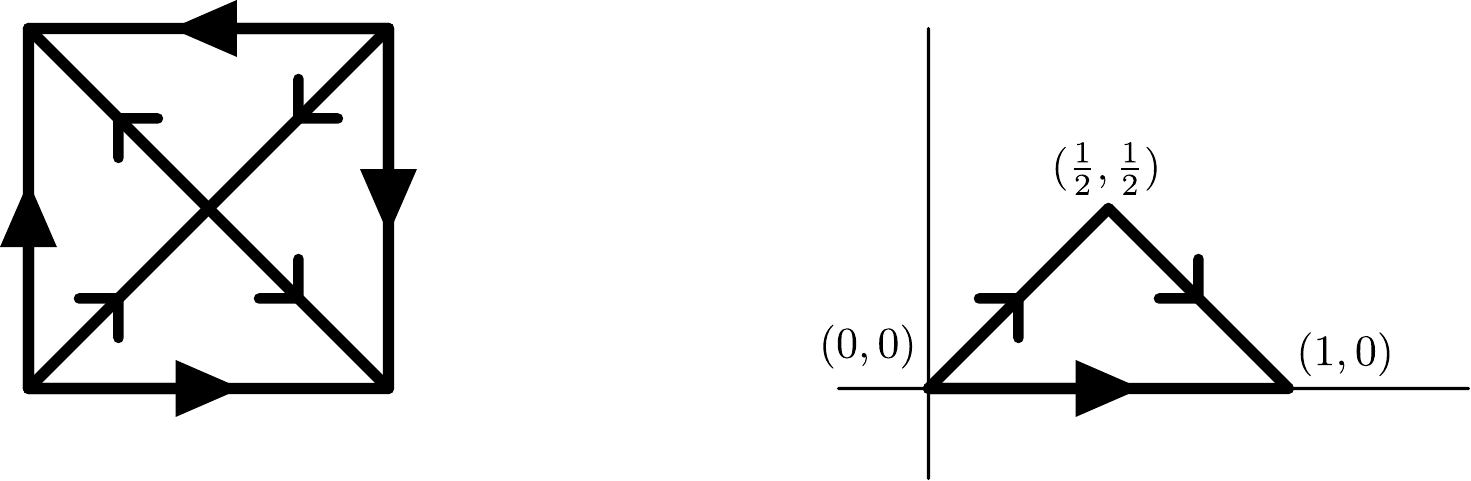}
 \end{center}
 \nota{We divide a bad square in four triangles (left). We can identify each of them with the standard one respecting the arrows (right).}
 \label{square_in_triangles:fig}
\end{figure}

Also in this case these maps extend the maps defined on the edges and glue together to a well-defined map on $C$ that we still call \emph{the diagonal map}. 

\section{The Infinite Cyclic Covering} \label{infinite_cyclic_cover:section}

In this article we focus on the infinite cyclic covering associated with the maps built in \cite{BM, itamarmig}. Here we describe in detail the combinatorial structure of these infinite cyclic coverings. 

\subsection{The Cube Complex structure}
Let $f \colon C \to S^1$ be a homotopically non-trivial map. The map $f$ induces a map $f_*$ on the fundamental groups. Since $M$ retracts on $C$, we can identify their fundamental groups. The \emph{infinite cyclic covering of $M$ (resp. $C$) associated to $f$} is the covering of $M$ (resp $C$) associated with $\ker(f_*)$, and we denote it by $\Tilde{M}$ (resp. $\Tilde{C}$).

The tessellation of $M$ into polytopes $P_v$ lifts to a tessellation for $\widetilde M$ into polytopes $P_v^t$, parametrized by $v \in \tonde{\sfrac{\matZ}{2\matZ}}^c$ and $t\in \matZ$, with the requirement that $v_1+\cdots + v_c + t$ is even. 

The facet $F$ of $P_v^t$ is identified with the identity map to the corresponding facet of $P_{v+e_i}^{t \pm 1}$, where $i$ is the colour of $F$ and the sign $+1$ or $-1$ depends on whether the status of $F$ in $P_v$ is O or I. 

The covering $\widetilde M \to M$ is the forgetful map $P_v^t \to P_v$. The monodromy of the covering $\widetilde M \to M$ is the map $\tau\colon \widetilde M\to \widetilde M$ that sends $P_v^t$ to $P_v^{t+2}$ identically.

\begin{figure}
 \begin{center}
  \includegraphics[width = 12.5 cm]{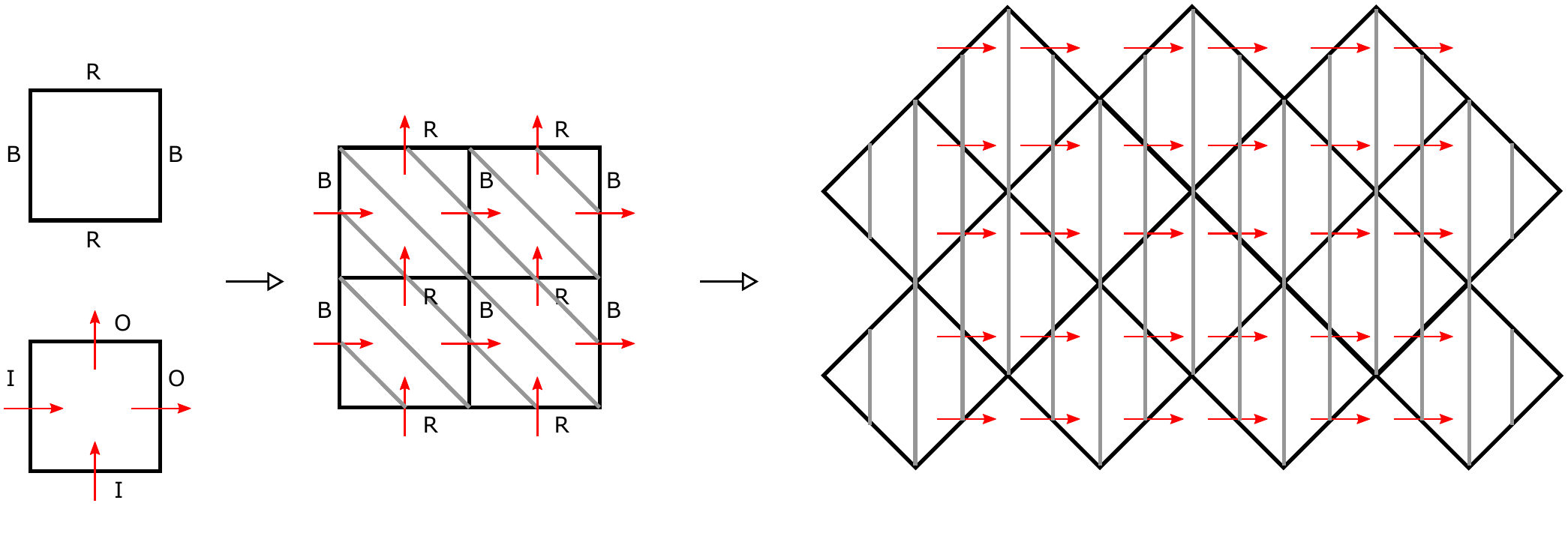}
 \end{center}
\nota{The manifold $\widetilde M$ when $P$ is a square with the colours and state indicated on the left. The central figure shows the torus $M$ tessellated into four squares (the opposite external vertical and horizontal edges should be identified), and the fibration $f\colon M\to S^1$ is suggested by the red arrows, with some fibers drawn in grey. The manifold $\widetilde M$ is the infinite annulus shown on the right (the zigzagged top and bottom edges should be identified), and the fibration $\tilde f\colon \widetilde M \to \matR$ is also suggested by the red arrows, with some fibers drawn in grey (so $\tilde f$ is the projection on the horizontal axis).}
 \label{squares:fig}
\end{figure}

The dual of this tessellation of $\widetilde M$ is $\Tilde{C}$, onto wich $\Tilde{M}$ retracts. The vertices of $\widetilde C$ are parametrized by the pairs $(v,t)$ with $v\in \tonde*{\sfrac{\matZ}{2 \matZ}}^c, t\in \matZ$ and $v_1+\cdots+v_c+t$ even. The vertex $(v,t)$ is dual to $P_v^t$.

The lifted map $\tilde f\colon \widetilde C \to \matR$ sends the vertex $(v,t)$ of $\widetilde C$ to $t$ and is extended diagonally on $\widetilde C$. See an instructing example with the square in Figure \ref{squares:fig}. 



The \emph{level} of a vertex $(v,t)$ of $\widetilde C$ is the number $t$, that is its image along $\tilde f$. More generally, the \emph{level} of a $k$-cube in $\widetilde C$ is the average of the levels of its vertices, that is the image of its center along $\tilde f$. When the states are obtained as in \cite{BM}, this number lies in $\matZ$ or in $\matZ + \frac 12$ according to the parity of $k$. When we admit paired colours (as in \cite{itamarmig}), this is no longer true: for example, bad squares have level in $\matZ + \frac 12$. The level of every $k$-stratum of the tessellation into polytopes $\{P_v^t\}$ is by definition the level of its dual $(n-k)$-cube.

We are interested in particular in the 2-skeleton of $\Tilde{C}$. Every facet $F$ of the tessellation of $\Tilde{M}$ is adjacent to $P_v^t$ and $P_{v+e_i}^{t+1}$ and has level $t+\frac 12$. Facets correspond to the edges of the cube complex. Given a codimension-2 face, the corresponding dual square can be of two types: if the edges are oriented coherently, then its lifts are adjacent to four polytopes
$$P_v^{t-1}, P_{v+e_i}^t, P_{v+e_j}^t, P_{v+e_i+e_j}^{t+1}$$
and have level $t$.
If it is a bad square (remember Figure \ref{maps_on_squares:fig}), then its lifts are adjacent to four polytopes
$$P_v^{t}, P_{v+e_i}^{t+1}, P_{v+e_j}^{t+1}, P_{v+e_i+e_j}^t$$
and have level $t+\frac12$.

\subsection{The fiber}\label{fibers:subsection}

The map $f \colon C \to S^1$ lifts to a map $\Tilde{f} \colon \Tilde{C} \to \matR$. We are especially interested in the fiber $C^{\rm sing}_t = \tilde f^{-1}(t)$ for some $t \in \matZ$. We call it \emph{singular fiber} following the notation of \cite{BM}; see Figure \ref{fiber_in_cubes:fig}.

\begin{figure}
 \begin{center}
  \includegraphics[width = 8cm]{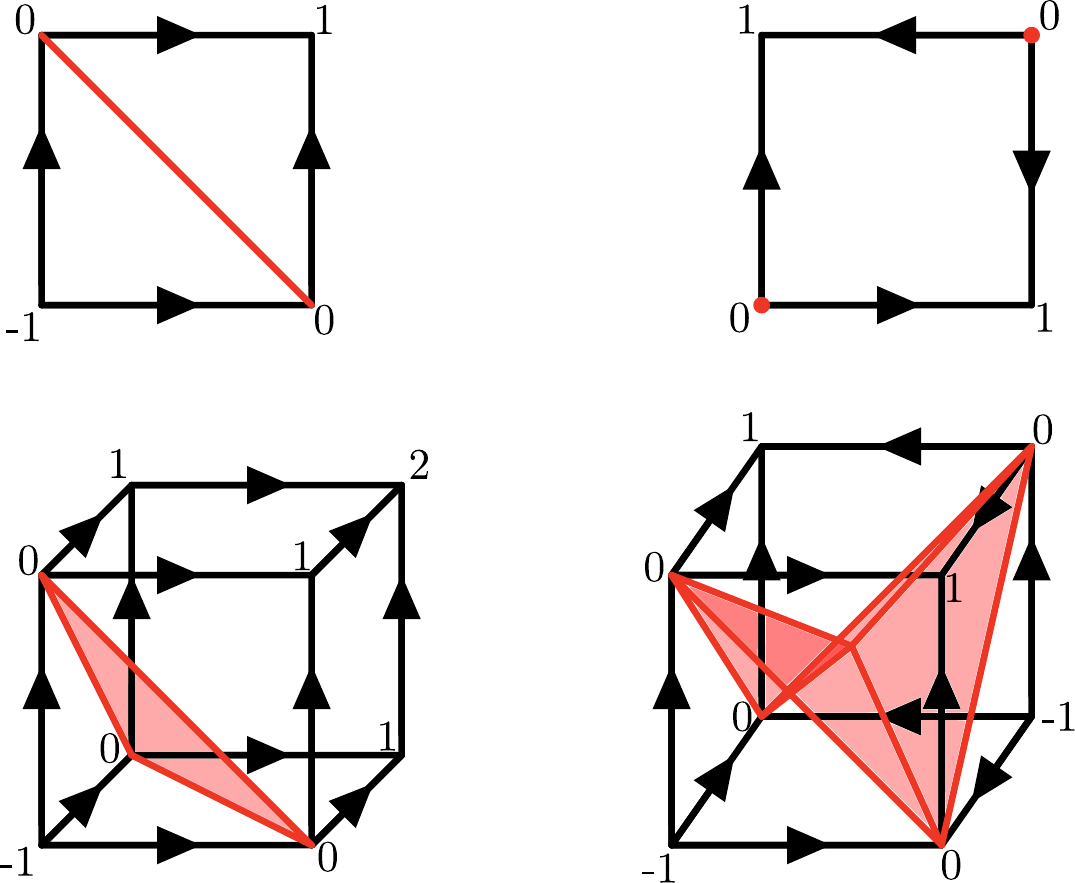}
 \end{center}
 \nota{The singular fiber $\Tilde{f}^{-1}(0)$ inside the cubes of the cube complex $\Tilde{C}$. In the top left in a coherently oriented square, in the top right in a bad square (where the intersection is only 2 vertices), in the bottom left in a coherently oriented 3-cube, in the bottom right in a bad square times an interval. The numbers denote the levels of the vertices in $\Tilde{C}$.}
 \label{fiber_in_cubes:fig}
\end{figure}

We describe a condition introduced in \cite{BB} that assures that the inclusion of $C^{\rm sing}_t$ in $\Tilde{C}$ is $\pi_1$-surjective. See also \cite{BM,itamarmig} for more details.

Let $C$ be a cube complex equipped with an orientation on its edges, and $f\colon C \to S^1$ be the diagonal map. Let $v$ be a vertex of $C$. Let $\lk(v)$ be the link of $v$ in $C$. By construction $\lk(v)$ is an abstract simplicial complex. Every vertex of $\lk(v)$ indicates an edge incident to $v$, and we assign to it the \emph{status} I (In) or O (Out) according to whether the edge points towards $v$ or away from $v$.

Following \cite{BB}, we define the \emph{ascending link} $\lk_\uparrow(v)$ (respectively, \emph{descending link} $\lk_\downarrow(v)$) to be the subcomplex of $\lk(v)$ generated by all the vertices with status O (respectively, I).

The following result is proved in \cite{BB}, Corollary 2.6:

\begin{fact}\label{inclusion_pi1_surjective:fact}
If the descending and ascending link of every vertex are connected, the inclusion of $C^{\rm sing}_t$ in $\Tilde{C}$ is $\pi_1$-surjective.
\end{fact}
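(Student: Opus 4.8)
I would prove this following the piecewise-affine Morse theory of Bestvina--Brady, applied to the map $\tilde f\colon\tilde C\to\matR$. The plan has three parts: set up the local structure of $\tilde f$ near vertices; prove a ``Morse Lemma'' describing how the homotopy type of a thick level set $\tilde f^{-1}([a,b])$ changes as the endpoints move; and run a direct-limit argument to pass from the thin fiber $C^{\rm sing}_t=\tilde f^{-1}(\{t\})$ to all of $\tilde C$. The starting observation is that $\tilde f$ is affine-linear on every cube of $\tilde C$ and non-constant on every oriented edge, along which it changes by exactly $\pm1$; hence the level sets $\tilde f^{-1}(r)$ with $r\notin\matZ$ are ``regular'' and the only critical behaviour occurs at the vertices, all of which have integer level.

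For a vertex $v$ of level $c$, the status-$I$ edges at $v$ (the ones pointing towards $v$) are precisely those along which $\tilde f$ decreases away from $v$, and a small neighbourhood of $v$ in the sublevel set $\tilde f^{-1}((-\infty,c])$ deformation retracts onto the cone, with apex $v$, over a geometric realisation of $\lk_\downarrow(v)$ (a $k$-cube all of whose $v$-edges have status $I$ contributes a $(k-1)$-simplex, realised by a slice of that cube just below level $c$); symmetrically $\lk_\uparrow(v)$ governs superlevel sets. Gluing these local models along common faces — which is legitimate because $\tilde f$ is affine on cubes — yields the Morse Lemma: up to homotopy, $\tilde f^{-1}([a,b+1])$ is obtained from $\tilde f^{-1}([a,b])$ by attaching a cone on $\lk_\downarrow(w)$ for every vertex $w$ of level $b+1$ (glued along a copy of $\lk_\downarrow(w)$ realised inside $\tilde f^{-1}(\{b\})\subseteq\tilde f^{-1}([a,b])$), and dually $\tilde f^{-1}([a-1,b])$ is obtained from $\tilde f^{-1}([a,b])$ by coning off the ascending links of the vertices of level $a-1$. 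The only place requiring genuine care is the bookkeeping at the vertices sitting exactly at the ends of the interval, but this is harmless here, since every such enlargement cones off ascending or descending links, and these are all connected by hypothesis.

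Now the rest is formal. Attaching a cone on a connected complex $L$ to a space $X$ along a subcomplex produces a space $X'$ with $\pi_1(X)\to\pi_1(X')$ surjective — van Kampen gives $\pi_1(X')\cong\pi_1(X)/N$ with $N$ normally generated by the image of $\pi_1(L)$, and connectedness of $L$ is exactly what lets one choose a compatible basepoint. Combining this with the Morse Lemma, each inclusion
\[
\tilde f^{-1}\big([t-k,\,t+k]\big)\ \hookrightarrow\ \tilde f^{-1}\big([t-k-1,\,t+k+1]\big)
\]
is $\pi_1$-surjective, and so is the inclusion $C^{\rm sing}_t=\tilde f^{-1}(\{t\})\hookrightarrow\tilde f^{-1}([t-k,t+k])$ for every $k\ge0$ (built up one level at a time from $k=0$). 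Since $\tilde C=\bigcup_{k\ge0}\tilde f^{-1}([t-k,t+k])$ is an increasing union of subcomplexes, $\pi_1(\tilde C)=\varinjlim_k\pi_1(\tilde f^{-1}([t-k,t+k]))$, and composing the surjections shows that $\pi_1(C^{\rm sing}_t)\to\pi_1(\tilde C)$ is surjective, which is the claim. (A more hands-on variant avoids the direct limit: a loop in $\tilde C$ has compact image, hence lies in some $\tilde f^{-1}([t-k,t+k])$, and one homotopes it into the fiber by repeatedly using connectedness of the descending link of a top-level vertex, and of the ascending link of a bottom-level vertex, to reroute the loop off that vertex and shrink the range of $\tilde f$ on it.)

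The main obstacle is the Morse Lemma itself: constructing the piecewise-affine deformation retractions in the half-open stars of the vertices of each level, checking that they agree on common faces so that they glue to a global retraction, and handling the endpoint cases correctly. Everything after that is soft, and the connectedness hypothesis enters in exactly one spot — ensuring that coning off each ascending or descending link does not disturb $\pi_1$-surjectivity of the relevant inclusion.
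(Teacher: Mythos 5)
The paper does not prove this Fact; it cites it directly from Bestvina--Brady (Corollary 2.6 of \cite{BB}), so there is no in-paper argument to compare against. Your proof correctly reconstructs the Bestvina--Brady piecewise-affine Morse theory that underlies the citation: $\tilde f$ is affine and non-constant on each cube, so level sets change only at vertices; widening the slab $\tilde f^{-1}([a,b])$ across an integer level cones off the ascending or descending links of the vertices crossed; van Kampen shows that coning a \emph{connected} subcomplex into a space induces a $\pi_1$-surjection; and the exhaustion of $\tilde C$ by the slabs $\tilde f^{-1}([t-k,t+k])$ (plus compactness of a loop's image) finishes the argument. The delicate points you flag --- realizing the link as a simplicial slice of the cubical star near a vertex, and the bookkeeping at vertices sitting exactly at the endpoints of the interval --- are precisely where Bestvina--Brady's technical lemmas spend their care, and your sketch handles them at the right level of rigour.
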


All the orientations of the edges of $C$ that we consider in this paper satisfy the hypothesis of Fact \ref{inclusion_pi1_surjective:fact}.

\begin{figure}
 \begin{center}
  \includegraphics[width = 8cm]{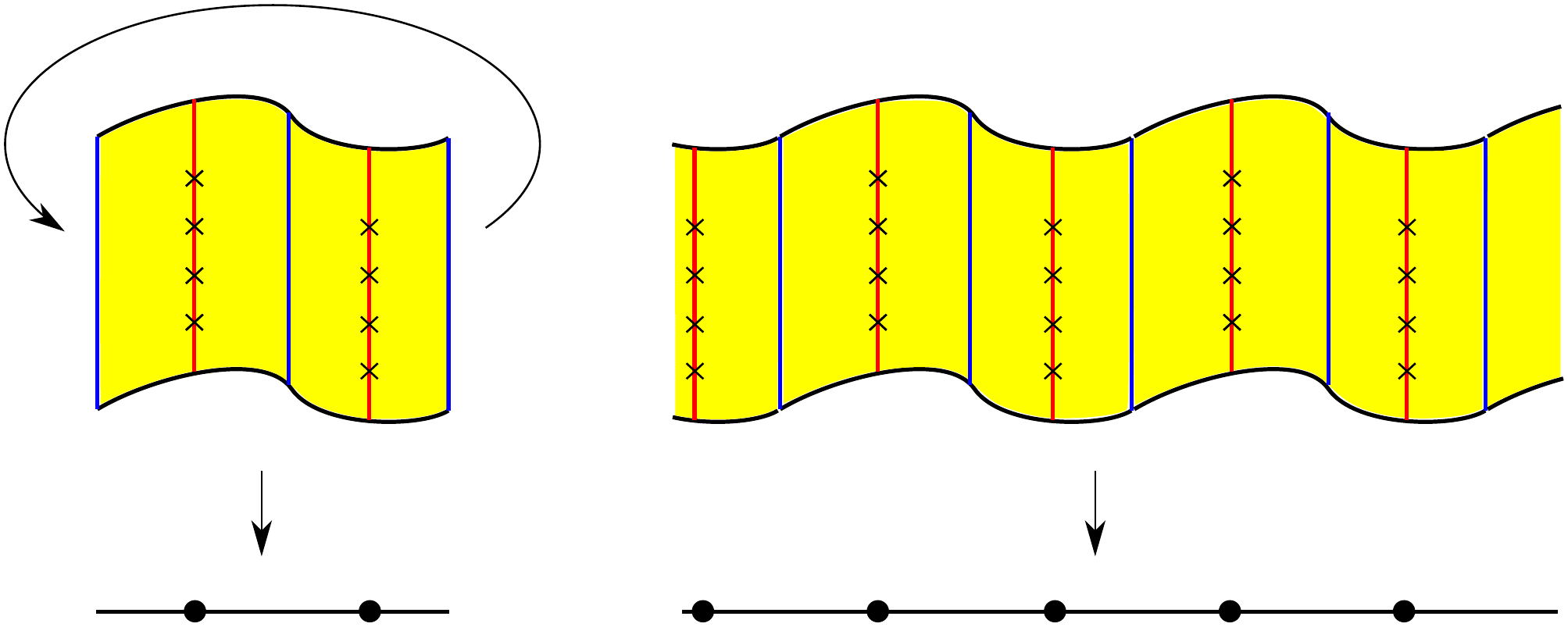}
 \end{center}
 \nota{A circle-valued Morse function (left) and its lift (right). The critical points are indicated with an X. The regular fiber is the blue line, the singular fiber is the red line. Every critical point of index $i$ corresponds to the attachment of a $i$-handle.}
 \label{cyc_cover:fig}
\end{figure}

\subsection{The finite subcomplex $F_{[m,n]}$}\label{Fnm:subsection}
The cube complex structure is a nice combinatorial structure that can help in finding the space of infinitesimal deformations of $\Tilde{M}$. The main problem with $\Tilde{C}$ is the fact that it is infinite, hence we cannot apply any finite algorithm to it. Here we define a nice finite subcomplex $F_{[m,n]}$ of $\Tilde{C}$ that contains all the information we need, \emph{i.e.}\ its inclusion is $\pi_1$- surjective.

Recall that the map $\Tilde{f}$ sends every vertex in $\widetilde C$ to some integer $m \in \matZ$, every edge to an interval $[m,m+1]$, and every square either to an interval $[m,m+1]$ (if it is the lift of a bad square) or to an interval $[m,m+2]$ (if it is the lift of a coherently oriented square). For every pair $m < n$ of integers we define $F_{[m,n]}$ as the union of all the (finitely many) squares whose image lies in $[m,n]$. We want to prove that under certain conditions its inclusion in $\Tilde{C}$ is $\pi_1$-surjective. We start with a lemma:
\begin{lemma}\label{zigzag:lemma}
Let $G$ be a quasi-coherently oriented cube (see Definition \ref{quasi_coherent:def}) inside $C$ with $\dim(G) \leq 9$. Let $\Tilde{f} \colon G \to \matR$ be a lift of the diagonal map on $G$ that takes the value $0$. Let $\gamma \colon [0,1] \to G$  be a path such that $\gamma(0)$ and $\gamma(1)$ are vertices of $G$ that are sent to $0$ via $\Tilde{f}$. Let $E$ be the union of the edges of $G$ whose endpoints have image through $\Tilde{f}$ in the set $\set{-1;0;1}$. Then $\gamma$ is fixed-endpoint homotopic to a path $\gamma'$ contained in $E$.
\end{lemma}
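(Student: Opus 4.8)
The plan is to reduce to a purely local, combinatorial statement on a single cube (the bound $\dim(G)\le 9$ suggests that the heart of the argument is a finite check, carried out either by hand or by computer, on the possible quasi-coherently oriented cubes). First I would set up the right picture of the level function on a standard cube. If $G$ is coherently oriented, we may identify it with $[0,1]^d$ with all edges oriented away from the origin, so that $\tilde f(x_1,\dots,x_d)=\sum x_i$ up to the additive constant chosen so that $\tilde f$ hits $0$; if $G$ has the orientation induced by a bad square times a coherently oriented $(d-2)$-cube, we use instead the product coordinates from Section~\ref{diag_map:subsection}, where on the bad-square factor $\tilde f$ restricts to $y_1$ (using the triangulated diagonal map). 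In either case the vertices of $G$ have integer levels, and the set $E$ of edges with endpoints at levels in $\{-1,0,1\}$ forms a connected subgraph of the $1$-skeleton containing all the level-$0$ vertices: this connectivity is the key structural fact, and it follows because in the standard cube the level-$0$, level-$\pm 1$ vertices together with the edges between them form the "middle layers" of the Boolean lattice (or its mild modification coming from a bad square), which is connected as soon as $d$ is not too large relative to where $0$ sits — and here $\dim G \le 9$ keeps us in the controlled range.

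The main step is then a deformation-retraction argument. I would show that $G$ deformation retracts onto the subcomplex $E'$ obtained from $E$ by filling in the $2$-cells (squares of $G$) all of whose edges lie in $E$, and that this retraction is through maps that are the identity on the level-$0$ vertices. Concretely, order the strata of $G$ of level-distance from $0$ exceeding $1$ (i.e. cells meeting a vertex of level $\ge 2$ or $\le -2$) and collapse them one at a time: each top-dimensional such cell is a free face relative to the part already retracted, because the subset of its boundary at level $\ge 2$ (or $\le -2$) is a cone, hence contractible, and can be pushed down. Since $\gamma$ is a path with endpoints among the level-$0$ vertices, applying this retraction to $\gamma$ produces the desired $\gamma'$ into $E'$; and since $E'\subseteq \overline{E}$ deformation retracts onto $E$ (again by collapsing the added $2$-cells from their free edges in $E$), we land in $E$. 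Throughout, the homotopies fix the endpoints because they fix all level-$0$ vertices, and $\gamma(0),\gamma(1)$ are such vertices by hypothesis.

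The hard part will be verifying the connectivity and collapsibility claims uniformly over \emph{all} quasi-coherently oriented cubes of dimension $\le 9$, in particular handling the bad-square factor, where the level function is not the naive sum of coordinates and the "layers" are subtly different (a bad square contributes levels $0,1,1,0$ rather than $0,1,1,2$, so the level-$0$ vertices are two opposite corners and the edge graph $E$ there is just the boundary of the square). I expect this case analysis to be where the dimension bound is actually used — one has to check that the subcomplex of cells at level $\ge 2$ (equivalently $\le -2$) is contractible in each cube, which is clear for the coherent cube (it is a cone on the top vertex) but needs a separate look for the product with a bad square. A clean way to organize this is to treat the bad-square factor and the coherent factor separately via a Künneth-style argument on the collapses, reducing everything to the two model cases; I would carry out that reduction first and then dispatch the two models directly.
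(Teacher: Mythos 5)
Your instinct that the heart of the lemma is a finite combinatorial check on a single cube, and that the key structural fact is the connectivity of $E$, matches the paper. However, you miss the one observation that makes everything else unnecessary: $G$ is a cube, hence \emph{contractible}, so any two paths in $G$ with the same endpoints are automatically fixed-endpoint homotopic. Once you note that every level-$0$ vertex of $G$ lies in $E$ (since levels of adjacent vertices differ by exactly $1$), the lemma reduces entirely to the statement that $E$ is connected: just pick any edge-path in $E$ from $\gamma(0)$ to $\gamma(1)$ and contractibility does the rest. This is what the paper does, checking the connectivity of $E$ for all quasi-coherently oriented cubes of dimension $\le 9$ with a Sage script.

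The elaborate deformation-retraction argument you propose (collapsing high-level cells, filling and re-collapsing $2$-cells, Künneth-style reduction to model cases) is therefore a large detour, and you yourself flag that the collapsibility claims would still need uniform verification over all the cube types. As written this is a genuine gap: you have not actually exhibited the homotopy, and the machinery you outline is both harder than the problem demands and not carried through. The fix is short — replace the retraction step with the contractibility of $G$ — after which your program collapses to exactly the paper's: verify that $E$ is connected, which is a finite check enabled by the dimension bound.
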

\begin{proof}
We do not know whether the hypothesis on the dimension of $G$ is necessary. In any case, we will need this result only in dimension less or equal than $5$.

We start by noticing that $G$ is contractible, hence two paths are fixed-endpoint homotopic if and only if they have the same extremal points. We also notice that every vertex $v$ such that $\Tilde{f}(v)=0$ is contained in $E$, because the values of $\Tilde{f}$ on the two endpoints of any edge differ by 1. Therefore to complete the proof we just need to check that the subcomplex $E$ is connected. To do this we use the script "Check\_zigzag" written in Sage available at \cite{code}.
\end{proof}

We are now ready to prove the following:
\begin{prop}
Suppose that $C$ is quasi-coherently oriented and $\dim(M) \leq 9$. Let $n-m \geq 2$ and let $x$ be a vertex of $\Tilde{C}$ of level $m+1$. Inside $\pi_1(\Tilde{C},x)$, the subgroup $i_*(\pi_1(F_{[m,n]},x))$ contains the subgroup $i_*(\pi_1(\Tilde{f}^{-1}(m+1),x))$.
\end{prop}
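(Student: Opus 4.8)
The plan is to start from an arbitrary loop $\gamma$ based at $x$ with image in the singular fibre $\tilde f^{-1}(m+1)$, deform it inside $\tilde C$ (rel $x$) to an edge-loop all of whose vertices have level in $\{m,m+1,m+2\}$, and then check that such an edge-loop already lies in $F_{[m,n]}$. The engine is the zigzag lemma (Lemma~\ref{zigzag:lemma}), applied one cube at a time.

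\emph{Step 1 (cutting $\gamma$ into single-cube pieces).} For a cube $G$ of $\tilde C$ put $S_G=\tilde f^{-1}(m+1)\cap G$. Inspecting the two types of quasi-coherently oriented cubes (cf.\ Figure~\ref{fiber_in_cubes:fig}) one sees that every connected component of $S_G$ is path-connected and contains a vertex of $G$; since an edge of $\tilde C$ changes level by exactly $1$, such a vertex necessarily has level $m+1$. Using compactness of the image of $\gamma$ I subdivide $[0,1]$ so that each subarc $\gamma|_{[t_j,t_{j+1}]}$ lies in a single cube $G_j$ of $\tilde C$; the breakpoint $\gamma(t_j)$ lies in the face $H_j=G_{j-1}\cap G_j$, hence in one component of $S_{H_j}$, and I slide it inside that component to a level-$(m+1)$ vertex $w_j$ of $H_j$ (hence of $G_j$). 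Inserting the back-and-forth corrections produced by these slidings shows that $\gamma$ is homotopic rel $x$, \emph{inside} $\tilde f^{-1}(m+1)$, to a concatenation $\beta_0*\cdots*\beta_{k-1}$ in which $\beta_j$ is a path contained in the single cube $G_j$ joining the level-$(m+1)$ vertices $w_j$ and $w_{j+1}$, with $w_0=w_k=x$. Each $G_j$ is quasi-coherently oriented with $\dim G_j\leq\dim M\leq 9$, so Lemma~\ref{zigzag:lemma}, applied with the lift $\tilde f-(m+1)$, homotopes $\beta_j$ rel endpoints, inside $G_j\subseteq\tilde C$, to an edge-path $\gamma_j'$ every edge of which has both endpoints of level in $\{m,m+1,m+2\}$. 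Concatenating, $\gamma$ is homotopic rel $x$ in $\tilde C$ to an edge-loop $\gamma'=\gamma_0'*\cdots*\gamma_{k-1}'$, all of whose vertices have level in $\{m,m+1,m+2\}$.

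\emph{Step 2 ($\gamma'$ lies in $F_{[m,n]}$).} It suffices to show that every edge occurring in some $\gamma_j'$ is a side of a square of $\tilde C$ whose $\tilde f$-image is contained in $[m,m+2]$, since $[m,m+2]\subseteq[m,n]$ when $n-m\geq 2$. Consider first a coherent cube $G_j$. If $S_{G_j}$ is a single point then $\beta_j$ is constant and $\gamma_j'$ is empty; otherwise, writing $G_j=[0,1]^d$ with $\tilde f=\ell_0+x_1+\cdots+x_d$, the slice $S_{G_j}=\{x_1+\cdots+x_d=m+1-\ell_0\}$ being more than a point forces $1\leq m+1-\ell_0\leq d-1$, i.e.\ $\ell_0\leq m$ and $\ell_0+d\geq m+2$. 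Under these inequalities a direct computation shows that any edge of $G_j$ with both endpoints of level in $\{m,m+1,m+2\}$ is a side of a $2$-face of $G_j$ whose four vertices have levels $m,m+1,m+1,m+2$, that is, a square of $\tilde C$ of $\tilde f$-image $[m,m+2]$. For a quasi-coherently but not coherently oriented cube $G_j$ — a bad square times a coherent cube — the edges of $\gamma_j'$ lying in a sub-bad-square are automatically sides of a $2$-face of $\tilde f$-image of length $1$ meeting level $m+1$, hence in $[m,m+2]$, and the edges lying in a coherent sub-cube are handled by the same computation together with a short additional case check; again every such edge is a side of a square of $\tilde C$ with $\tilde f$-image in $[m,m+2]$. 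Therefore $\gamma'\subseteq F_{[m,n]}$, so the image of $[\gamma]$ in $\pi_1(\tilde C,x)$ equals the image of $[\gamma']$, which lies in $i_*(\pi_1(F_{[m,n]},x))$; as $\gamma$ was an arbitrary loop in $\tilde f^{-1}(m+1)$ based at $x$, the asserted inclusion of subgroups follows.

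\emph{Where the difficulty lies.} The genuinely delicate point is Step 2, and within it the bookkeeping for the bad-square cube type: one must check, over the finitely many combinatorial shapes of the fibre slice $S_G$ inside a cube of dimension $\leq 9$, that no edge surviving the zigzag lemma is ``one-sided'' with respect to level $m+1$, i.e.\ that each such edge genuinely bounds a $2$-face whose $\tilde f$-image stays inside $[m,m+2]$; should a residual degenerate configuration remain, it can be ruled out using the connectivity of all ascending and descending links assumed throughout (Fact~\ref{inclusion_pi1_surjective:fact}). Step 1 is routine, but the subdivision and the sliding of breakpoints must be organised carefully so that the zigzag lemma is applied to an honest path inside an honest single cube.
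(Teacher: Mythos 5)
Your proof follows the same strategy as the paper's: decompose a fibre loop into single-cube pieces, reroute the breakpoints to level-$(m+1)$ vertices chosen inside the intersection cubes, and apply Lemma~\ref{zigzag:lemma} to each piece. Your Step~2, which checks that the edges produced by the zigzag lemma are genuinely sides of squares whose $\tilde f$-image lies in $[m,n]$, spells out a point the paper passes over silently --- the paper jumps directly from ``the path lies in $E$'' to ``the path lies in $F_{[-1,1]}$'', which is not immediate from the definition of $F_{[m,n]}$ as a union of squares; your computation for coherent cubes is correct, and the remaining bad-square bookkeeping is, as you acknowledge, a finite case check. One small simplification worth noting: there is no need to keep the connecting paths inside the fibre in Step~1, nor to argue that each component of $S_G$ contains a vertex. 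As in the paper, any path inside the intersection cube $J_j$ from the breakpoint to a level-$(m+1)$ vertex of $J_j$ suffices, since all that matters is the homotopy class of the final loop in $\pi_1(\tilde C,x)$, not in $\pi_1(\tilde f^{-1}(m+1),x)$; one only needs the (true, by the structure of the diagonal map) fact that a cube on which $\tilde f$ attains the value $m+1$ has a vertex of level $m+1$.
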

\begin{proof}
We prove that $i_*(\pi_1(F_{[-1,1]},x))$ contains the subgroup $i_*(\pi_1(\Tilde{f}^{-1}(0),x))$. The general case is a straightforward generalization.

Consider a loop $\alpha \colon [0,1] \to \Tilde{f}^{-1}(0)$ with $\alpha(0)=\alpha(1)=x$. Up to homotopy inside $\Tilde{f}^{-1}(0)$, we can suppose that $\alpha$ is the concatenation of a finite number of paths\footnote{secondo te devo giustificarlo meglio?}:
\[ \alpha = \alpha_1 \cdot \ldots \cdot \alpha_n \]
where the image of each $\alpha_i$ is contained in a cube $G_i$ inside $\Tilde{C}$.
For each $i=1,\ldots,n-1$, the point $v_i=\alpha_i(1)=\alpha_{i+1}(0)$ is contained in a cube $J_i$ contained in the intersection $G_i \cap G_{i+1}$. Now we notice that, by the definition of the diagonal map, the fact that $\Tilde{f}$ takes the value $0$ on the cube $J_i$ implies that there is a vertex $w_i$ of $J_i$ such that $\Tilde{f}(w_i)=0$. Let $\beta_i$ be any path inside $J_i$ that connects the points $v_i$ and $w_i$. For convenience, let $\beta_0=\beta_{n}$ be the constant path that takes value $x=\alpha(0)=\alpha(1)$. Consider now
\[ \gamma =  \prod_{i=0}^{n-1} \beta_i ^{-1} \cdot \alpha_{i+1} \cdot \beta_{i+1} .\]
The loop $\gamma$ is homotopic to $\alpha$, because the compositions $\beta_i \cdot \beta_i^{-1} $ cancel out. Moreover, each term of the form $\beta_i ^{-1} \cdot \alpha_{i+1} \cdot \beta_{i+1}$ is a path inside $G_{i+1}$ whose endpoints are sent to $0$ by $\Tilde{f}$, see Figure \ref{cammino_tra_0:fig}. We can then apply the Lemma \ref{zigzag:lemma} and conclude that $\gamma$ is homotopic to a composition of paths contained in $F_{[-1,1]}$. This concludes the proof.
\begin{figure}
 \begin{center}
  \includegraphics[width = 4cm]{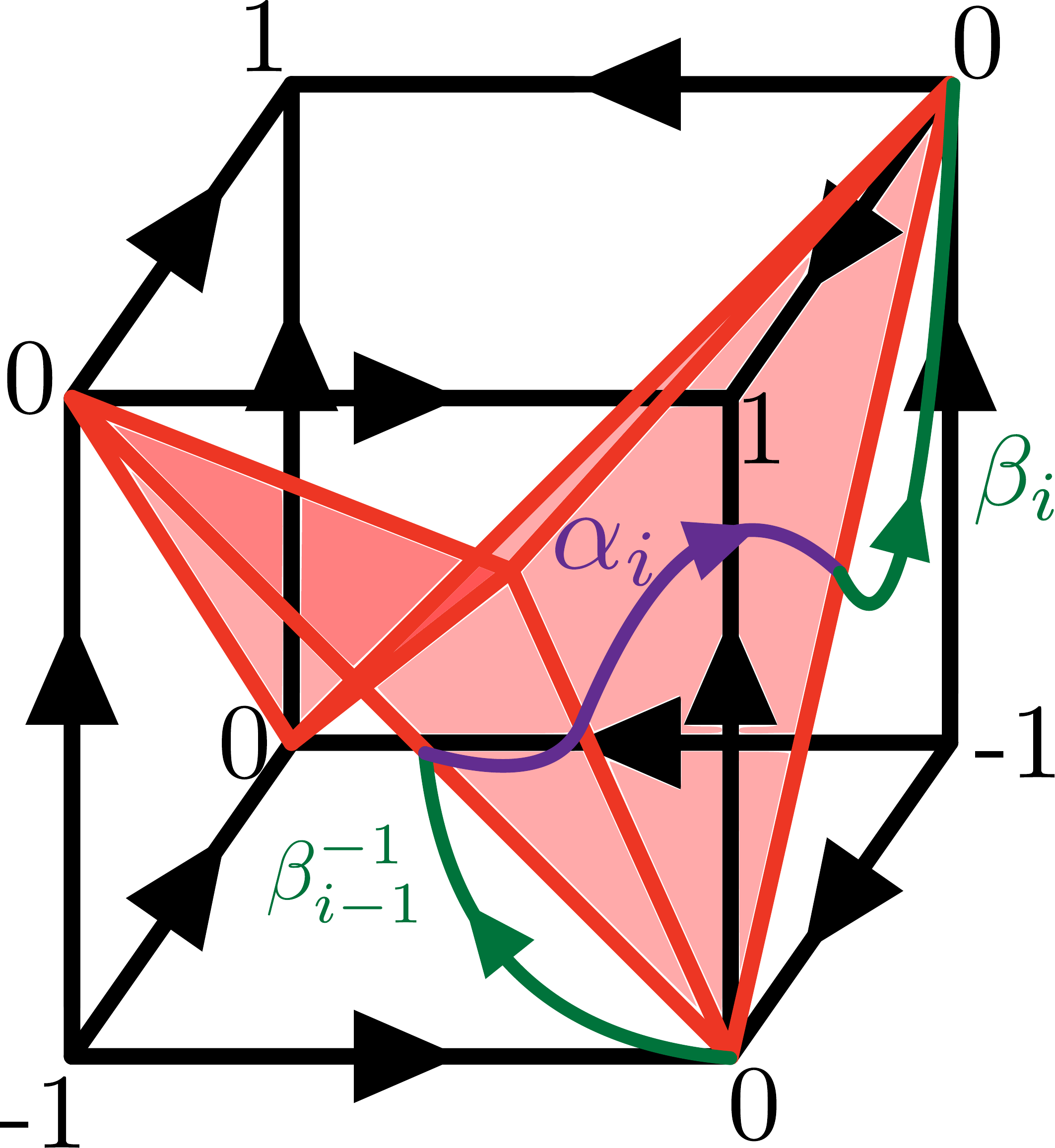}
 \end{center}
 \nota{The concatenation $\beta_{i-1} ^{-1} \cdot \alpha_{i} \cdot \beta_i$ is a path inside $G_i$ that connects two vertices that are sent to 0 via $\Tilde{f}$.}
 \label{cammino_tra_0:fig}
\end{figure}
\end{proof}

Using this proposition together with Fact \ref{inclusion_pi1_surjective:fact} we deduce the following:

\begin{cor}\label{Fnmsurj:cor}
Suppose that $C$ is quasi-coherently oriented, that $\dim(M)<10$ and that all the ascending and descending links in $C$ are connected. If $n-m \geq 2$, the map induced by the inclusion $i \colon F_{[m,n]} \to \Tilde{M}$ on the fundamental groups
\[i_* \colon \pi_1(F_{[m,n]}) \twoheadrightarrow \pi_1(\Tilde{M}) \]
is surjective.
\end{cor}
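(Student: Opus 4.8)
The plan is to chain together two results already available: Fact~\ref{inclusion_pi1_surjective:fact}, stating that (when all ascending and descending links are connected) the singular fiber $C^{\rm sing}_t=\widetilde f^{-1}(t)$ includes $\pi_1$-surjectively into $\widetilde C$, and the Proposition just proved, stating that inside $\pi_1(\widetilde C)$ the image of $\pi_1(F_{[m,n]})$ already contains the image of $\pi_1(C^{\rm sing}_{m+1})$ as soon as $n-m\ge 2$. Together these force $i_*\colon\pi_1(F_{[m,n]})\to\pi_1(\widetilde C)$ to be onto, and since $\widetilde M$ deformation retracts onto $\widetilde C$ this is the same as $\pi_1(F_{[m,n]})\to\pi_1(\widetilde M)$.

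Concretely, I would first fix a vertex $x$ of $\widetilde C$ of level $m+1$ and check that, because $n-m\ge 2$, it lies in $F_{[m,n]}$, so that it can serve as a common basepoint for $F_{[m,n]}$, $C^{\rm sing}_{m+1}$ and $\widetilde C$. Next I would invoke Fact~\ref{inclusion_pi1_surjective:fact} with $t=m+1$: its hypotheses hold because $C$ --- hence $\widetilde C$, the covering map being a local isomorphism on links --- is quasi-coherently oriented with connected ascending and descending links, so by Bestvina--Brady theory the fiber $C^{\rm sing}_{m+1}$ is connected and
\[ i_*\bigl(\pi_1(C^{\rm sing}_{m+1},x)\bigr)=\pi_1(\widetilde C,x). \]
Then I would invoke the previous Proposition, whose hypotheses --- $C$ quasi-coherently oriented, $\dim(M)<10$, $n-m\ge 2$, $x$ of level $m+1$ --- are exactly those assumed here, obtaining inside $\pi_1(\widetilde C,x)$ the containment
\[ i_*\bigl(\pi_1(F_{[m,n]},x)\bigr)\ \supseteq\ i_*\bigl(\pi_1(C^{\rm sing}_{m+1},x)\bigr). \]
Combining the two displays gives $i_*\bigl(\pi_1(F_{[m,n]},x)\bigr)=\pi_1(\widetilde C,x)$, i.e.\ $F_{[m,n]}\hookrightarrow\widetilde C$ is $\pi_1$-surjective. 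Finally, since $M$ deformation retracts onto $C$, the cover $\widetilde M$ deformation retracts onto $\widetilde C$, so $\pi_1(\widetilde C,x)\cong\pi_1(\widetilde M,x)$ canonically and the composite $F_{[m,n]}\hookrightarrow\widetilde C\hookrightarrow\widetilde M$ realizes $i_*$; hence $i_*\colon\pi_1(F_{[m,n]})\twoheadrightarrow\pi_1(\widetilde M)$.

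The whole substance of the argument lives in the Proposition and in Fact~\ref{inclusion_pi1_surjective:fact} (the latter resting on Lemma~\ref{zigzag:lemma} and its computer check), so this corollary is essentially a formal combination. The only delicate point --- and the one I expect to be the main, if minor, obstacle --- is the basepoint bookkeeping: verifying that a vertex of level $m+1$ genuinely lies in the finite subcomplex $F_{[m,n]}$ when $n-m\ge 2$, and that all three $\pi_1$-surjectivity statements can be based at the same vertex $x$, which is precisely where the connectedness of $C^{\rm sing}_{m+1}$ (from Bestvina--Brady theory) is needed.
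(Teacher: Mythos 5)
Your proof is correct and follows exactly the route the paper intends: the paper offers no explicit argument, saying only that the corollary follows by combining the preceding Proposition with Fact~\ref{inclusion_pi1_surjective:fact}, and your chain (Bestvina--Brady $\pi_1$-surjectivity of the singular fiber, then the Proposition's containment $i_*\pi_1(F_{[m,n]})\supseteq i_*\pi_1(\widetilde f^{-1}(m+1))$, then the deformation retraction $\widetilde M\simeq\widetilde C$) is precisely that combination, spelled out with the basepoint bookkeeping made explicit.
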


\section{The space of infinitesimal deformations of a representation}\label{Infinitesimal_deformation:section}
We recall some standard facts on deformations of hyperbolic structures. These can be found for instance in \cite{KS}.

Let $X$ be a hyperbolic $n$-manifold and $\rho$ a representation of $\pi_1(X)$ in the Lie group $G={\rm O}^+(n,1)=\Isom(\matH^n)$. The main example we keep in mind is when $\rho$ is the holonomy of $X$. Recall that a \emph{deformation} of $\rho$ is a smooth path of representations $\rho_t$, with $t$ varying in an interval $[0,1)$, such that $\rho_0=\rho$. 
For each element $g$ in $\pi_1(X)$ we consider the initial deformation direction: 
\[ \frac{d}{dt} \rho_t(g) |_{t=0}.  \]
In this way we assign to any element $g$ of $\pi_1(X)$ a vector $\zeta(g)$ in the tangent space of $\rho(g)$ in $G$. From the Leibnitz rule we deduce the following equality
$$\zeta(gh) = \zeta(g) \rho (h) + \rho(g) \zeta(h).$$
In the literature the element $\zeta(g)$ is often moved to $\mathfrak{g}$ using the differential of the right-multiplication by $\rho(g)^{-1}$, that is itself the right multiplication by $\rho(g)^{-1}$ since $G$ is a matrix group. So we define
$z(g) = \zeta(g) \rho(g)^{-1}$
and obtain a map
\[ z : \pi_1(X) \longrightarrow \mathfrak{g}. \]
To switch between $\zeta$ and $z$ it is sufficient to right-multiply by $\rho(g)$ or by $\rho(g)^{-1}$. The Leibnitz rule for $\zeta$ transforms into the \emph{cocycle condition}
\begin{equation} \label{cocycle:eqn}
z(gh) = z(g) + \rho(g) z(h) \rho (g)^{-1}.
\end{equation}
A \emph{cocycle} is a map $z \colon \pi_1(X) \to \mathfrak{g}$ that satisfies the cocycle condition. The cocycles form a vector space denoted by
\[Z^1(\pi_1(X), \mathfrak{g}_{\agg \rho}). \]
This space contains the directions along which there could be a chance to deform the representation  $\rho$. Some of these directions are quite obvious and we would like to ignore them: consider a smooth path $g_t$ of elements in $G$ such that $g_0=\Id$, and let $k_t$ be the conjugation by the element $g_t$. It is possible to deform the representation $\rho$ as $\rho_t = k_t(\rho)$. The resulting deformation direction $z$ only depends on the tangent vector $V=\frac{d}{dt}g_t|_{t=0} \in \mathfrak{g}$. Explicitly, we get
$$\zeta(g) = V \rho(g) - \rho(g) V, \qquad z(g)= V - \rho(g)V\rho(g)^{-1}.$$ 
A cocycle obtained in this way is called a \emph{coboundary}, and the subspace of all coboundaries is denoted by $ B^1(\pi_1(X), \mathfrak{g}_{\agg \rho}) $. The quotient of these two spaces
\[ H^1(\pi_1(X), \mathfrak{g}_{\agg \rho}) = \frac{Z^1(\pi_1(X), \mathfrak{g}_{\agg \rho})}{B^1(\pi_1(X), \mathfrak{g}_{\agg \rho})} \]
is called the \emph{space of infinitesimal deformations} of $\rho$. We say that $\rho$ is \emph{infinitesimally rigid} if such space is trivial.

This definition gains importance under the light of Weil's lemma \cite{We}, that asserts that any infinitesimally rigid representation is also \emph{locally rigid}, that is any deformation of $\rho$ is induced by a path of conjugations.

\begin{rem}
The cocycle condition implies the following:
$$z(g^{-1})= - \rho(g)^{-1} z(g) \rho (g), \qquad z(e) = 0.$$
\end{rem}

By construction we have a surjective homomorphism
$$\psi\colon \mathfrak g \longrightarrow B^1(\pi_1(X),\mathfrak{g}_{\agg \rho}), \qquad
\psi(V)( g) = V\rho(g) - \rho(g) V .$$
\begin{prop} \label{B:prop}
If the image of $\rho$ has limit set $S^{n-1} = \partial \matH^n$, the map $\psi$ is an isomorphism.
\end{prop}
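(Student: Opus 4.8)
The plan is to show that $\psi$ is injective, since it is already surjective by construction. So suppose $V \in \mathfrak{g}$ satisfies $\psi(V) = 0$, i.e.\ $V\rho(g) = \rho(g)V$ for every $g \in \pi_1(X)$; equivalently $\rho(g)V\rho(g)^{-1} = V$ for all $g$. We must deduce $V = 0$. Since $\mathfrak{g} = \mathfrak{so}(n,1)$ acts on $\matH^n$ (or on the ambient $\matR^{n,1}$) by Killing fields, the condition says that the Killing field $X_V$ associated to $V$ is invariant under the isometries $\rho(g)$ for all $g$; equivalently its flow $\exp(tV)$ commutes with the whole image $\rho(\pi_1(X))$. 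The strategy is to use the dynamics of the action on the boundary $S^{n-1} = \partial\matH^n$: an element of $\mathfrak{g}$ commuting with a group whose limit set is all of $S^{n-1}$ is forced to be central, hence $0$ since $\mathfrak{so}(n,1)$ has trivial center.

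I would carry this out as follows. First, pass to the group level: because $\exp\colon\mathfrak g \to G$ has image generating the identity component and $V$ is fixed by $\operatorname{Ad}\rho(g)$, the one-parameter subgroup $\phi_t = \exp(tV)$ centralizes $\rho(\pi_1(X))$. The (closure of the) group generated by $\rho(\pi_1(X))$ acts on $S^{n-1}$ with limit set all of $S^{n-1}$, and $\phi_t$ commutes with this action, so $\phi_t$ preserves the limit set and commutes with the dynamics on it. Second, analyze the possible types of the flow $\phi_t$ (elliptic, parabolic, or hyperbolic/loxodromic). If $\phi_{t_0}\neq\id$ is hyperbolic it has exactly two fixed points on $S^{n-1}$, an attracting and a repelling one; these must be preserved by every $\rho(g)$ (since $\rho(g)$ commutes with $\phi_{t_0}$ and hence permutes its fixed-point set while respecting the attracting/repelling dichotomy), so the limit set of $\rho(\pi_1(X))$ is contained in $\{p^+,p^-\}$, contradicting that it is all of $S^{n-1}$ (here $n\geq 2$). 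If $\phi_{t_0}$ is parabolic it has a unique fixed point on $S^{n-1}$, which is then fixed by all of $\rho(\pi_1(X))$, again contradicting that the limit set is everything. If $\phi_t$ is elliptic, it fixes a point in $\matH^n$ (or a totally geodesic subspace), and one argues that the whole group $\rho(\pi_1(X))$ then preserves the fixed-point set $\mathrm{Fix}(\phi_t)\subset \matH^n$; if this is a proper subspace its boundary sphere would contain the limit set, again a contradiction, and if $\mathrm{Fix}(\phi_t) = \matH^n$ then $\phi_t = \id$. In all cases $\phi_t \equiv \id$, hence $V = 0$, and $\psi$ is injective.

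The main obstacle is the elliptic case, because a nontrivial elliptic one-parameter group need not determine a single distinguished point on $S^{n-1}$ (it rotates a whole subsphere), so the "preserve the fixed point" argument is less immediate; one should instead note that $\mathrm{Fix}(\exp(tV))$ is a totally geodesic submanifold $Y \subsetneq \matH^n$ independent of $t$, that $\rho(g)Y = Y$ for all $g$ since $\rho(g)$ commutes with the flow, and that therefore $\partial Y \subsetneq S^{n-1}$ is a $\rho(\pi_1(X))$-invariant closed set — but the limit set is the smallest nonempty closed invariant set once there are no global fixed points, and here it equals all of $S^{n-1}$, forcing $\partial Y = S^{n-1}$, i.e.\ $Y = \matH^n$. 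The hyperbolic and parabolic cases are easy once one records the standard fact that the fixed-point set on $S^{n-1}$ of a loxodromic (resp.\ parabolic) isometry is a pair of points (resp.\ a single point), and that a group commuting with such an isometry preserves this set. I would also remark that the hypothesis on the limit set is exactly what makes the centralizer of $\rho(\pi_1(X))$ in $G$ trivial, which is the conceptual content of the proposition.
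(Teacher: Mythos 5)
Your proof is correct and follows essentially the same route as the paper: a nonzero $V$ in the kernel of $\psi$ would yield nontrivial isometries $\exp(tV)$ commuting with the image of $\rho$, and the paper's proof simply asserts that this is "easily seen to be impossible" when the limit set is all of $S^{n-1}$. You have merely spelled out that last assertion via the elliptic/parabolic/loxodromic trichotomy; this is a more detailed rendering of the same argument, not a different method.
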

\begin{proof}
If $\psi$ were not injective, we would get a non-trivial matrix $V\in \mathfrak g$ that commutes with the image of $\rho$. Therefore $e^V \in G = \Iso(\matH^n)$ would be a non-trivial isometry that commutes with the image of $\rho$. This is easily seen to be impossible when the limit set is the whole $S^{n-1}$.
\end{proof}

\begin{cor} \label{B:cor}
With the same hypothesis, we get 
$$\dim B^1(\pi_1(X), \mathfrak g_{\agg \rho}) = \dim \mathfrak g = \dim G = \frac{n(n+1)}{2}.$$
\end{cor}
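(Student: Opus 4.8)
The statement to prove is Corollary~\ref{B:cor}, which computes $\dim B^1(\pi_1(X), \mathfrak g_{\agg \rho})$. This is essentially immediate from Proposition~\ref{B:prop}, so the proof should be short.

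The plan is as follows. First I would invoke Proposition~\ref{B:prop}: under the hypothesis that the limit set of $\rho(\pi_1(X))$ is all of $S^{n-1}$, the canonical surjection $\psi\colon \mathfrak g \to B^1(\pi_1(X),\mathfrak g_{\agg\rho})$ is an isomorphism of vector spaces. Therefore $\dim B^1(\pi_1(X),\mathfrak g_{\agg\rho}) = \dim \mathfrak g$.

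Second, I would compute $\dim \mathfrak g$ directly. Here $G = {\rm O}^+(n,1) = \Isom(\matH^n)$, so $\mathfrak g = \mathfrak o(n,1)$, the Lie algebra of the orthogonal group of a non-degenerate symmetric bilinear form of signature $(n,1)$ on $\matR^{n+1}$. This consists of the $(n+1)\times(n+1)$ matrices that are skew-symmetric with respect to that form, and its dimension equals that of $\mathfrak{so}(n+1)$, namely $\binom{n+1}{2} = \frac{n(n+1)}{2}$. (One can also see this as $\dim G$, since $G$ and its Lie algebra have the same dimension.)

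Combining the two displays yields $\dim B^1(\pi_1(X),\mathfrak g_{\agg\rho}) = \dim \mathfrak g = \dim G = \frac{n(n+1)}{2}$, which is exactly the claim. There is essentially no obstacle here: the only nontrivial input is Proposition~\ref{B:prop}, which is assumed, and the dimension count for $\mathfrak o(n,1)$ is standard linear algebra.
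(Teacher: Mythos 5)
Your proof is correct and is exactly the intended argument: the paper states the corollary without a separate proof because it follows immediately from Proposition~\ref{B:prop} together with the standard dimension count $\dim \mathfrak{o}(n,1) = \frac{n(n+1)}{2}$.
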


\subsection{Finitely presented groups}
If $\pi_1(X)$ is finitely presented, we can determine
$H^1(\pi_1(X), \mathfrak{g}_{\agg \rho})$ as follows.
Given a finite presentation 
\[ \pi_1(X)= \langle\ g_1, \ldots, g_h\ |\ r_1, \ldots, r_k\ \rangle, \]
and a representation $ \rho\colon \pi_1(X) \to G$, 
a deformation of $\rho$ is of course determined by its behaviour on the generators. The same holds for a cocycle $z$ in $Z^1(\pi_1(X), \mathfrak{g}_{\agg \rho})$ since the following equalities hold
\[ z(g h) = z(g) + \rho(g) z(h) \rho (g)^{-1}, \qquad z(g^{-1})= - \rho(g)^{-1} z(g) \rho (g). \]
In particular $H^1(\pi_1(X), \mathfrak{g}_{\agg \rho})$ has finite dimension. 
An arbitrary assignment of elements in $\mathfrak{g}$ to the generators of $\pi_1(X)$ will not give rise to a cocycle in general: this assignment must fulfill some requirements that we now describe. 

We represent the cocycles using $\zeta$ instead of $z$. We get
\begin{equation} \label{zeta:eqn}
\zeta(gh) = \zeta(g) \rho(h) + \rho(g) \zeta(h), \qquad \zeta (g^{-1}) = - \rho(g)^{-1} \zeta(g) \rho(g)^{-1}.
\end{equation}
Given a word $w$ in $g_1,\ldots, g_h$ and their inverses and some invertible matrices $A_1,\ldots, A_h$, we denote by $w(A_1,\ldots,A_h)$ the matrix obtained by substituting in $w$ each $g_j$ with $A_j$. Consider a $h$-uple of matrices 
\[ D = (D_1, \ldots, D_h) \in (M(n+1,\matR))^h. \]

\begin{prop} \label{Dh:prop}
There is a cocyle $\zeta$ with $\zeta(g_i) = D_i$ for all $i$ if and only if:
\begin{itemize}
    \item every element $D_i$ is in the tangent space in $G$ at $M_i=\rho(g_i)$, and
    \item the relations vanish at first order along the direction $D$, \emph{i.e.}
\begin{equation} \label{ri:eqn}
      \frac{d}{dt}r_i(M_1+tD_1,\ldots,M_h+tD_h)\Big|_{t=0}=0 \ \ \forall i=1,\ldots,k. 
\end{equation}
\end{itemize}
In this case we have
\begin{equation} \label{zetag:eqn}
\zeta(g) = \frac{d}{dt}w(M_1+tD_1,\ldots,M_h+tD_h)\Big|_{t=0} 
\end{equation}
for every word $w$ that represents $g$.
\end{prop}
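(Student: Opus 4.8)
The plan is to translate the cocycle condition into the finitely many constraints coming from a presentation, by viewing a deformation as a first-order perturbation of the matrices $M_i = \rho(g_i)$ and tracking how the word map behaves under differentiation. First I would establish the ``only if'' direction: if $\zeta$ is a cocycle, then since each $\rho_t(g_i)$ lies in $G$ and $\rho_0(g_i)=M_i$, the derivative $D_i = \zeta(g_i) = \frac{d}{dt}\rho_t(g_i)|_{t=0}$ automatically lies in the tangent space $T_{M_i}G$; and since each relator $r_j$ is trivial in $\pi_1(X)$, we have $r_j(\rho_t(g_1),\dots,\rho_t(g_h)) = \Id$ for all $t$, whose $t$-derivative at $0$ is exactly \eqref{ri:eqn} with $M_i+tD_i$ replaced by the curve $\rho_t(g_i)$. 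The key point making this work with $M_i+tD_i$ rather than the actual curve is the chain-rule observation that for any word $w$, the value $\frac{d}{dt}w(M_1+tD_1,\dots)|_{t=0}$ depends only on the matrices $M_i$ and the tangent vectors $D_i$, not on the second-order behaviour of any chosen curve through $M_i$ with velocity $D_i$; this is because $w$ is a polynomial map (a product of the $A_j$ and their inverses, and matrix inversion is smooth) so its differential at $(M_1,\dots,M_h)$ is linear in the velocity vector. I would state and prove this chain-rule lemma first, as it is used in both directions and in establishing \eqref{zetag:eqn}.

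Next, for the ``if'' direction, suppose $D=(D_1,\dots,D_h)$ satisfies both bullet conditions. I would \emph{define} $\zeta$ on an arbitrary group element $g$ by choosing a word $w$ representing $g$ and setting $\zeta(g) = \frac{d}{dt}w(M_1+tD_1,\dots,M_h+tD_h)|_{t=0}$, which is formula \eqref{zetag:eqn}. The work is then to check: (a) this is well-defined, i.e. independent of the chosen word $w$; and (b) the resulting map satisfies the cocycle condition \eqref{zeta:eqn} (equivalently \eqref{cocycle:eqn} after the standard translation $z(g)=\zeta(g)\rho(g)^{-1}$). For (b), the identities $\zeta(gh)=\zeta(g)\rho(h)+\rho(g)\zeta(h)$ and $\zeta(g^{-1}) = -\rho(g)^{-1}\zeta(g)\rho(g)^{-1}$ follow directly from the chain-rule lemma applied to the concatenation of words and to the inversion map respectively: differentiating $w_1 w_2(M_\bullet + tD_\bullet) = w_1(M_\bullet+tD_\bullet)\, w_2(M_\bullet+tD_\bullet)$ at $t=0$ gives the Leibniz rule, and differentiating $w(M_\bullet+tD_\bullet)\, w^{-1}(M_\bullet+tD_\bullet) = \Id$ gives the inverse formula. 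For (a), I would use that any two words representing the same $g$ differ by a finite sequence of insertions/deletions of trivial subwords $g_i g_i^{-1}$ and of the relators $r_j^{\pm 1}$: insertions of $g_ig_i^{-1}$ change nothing by the (already verified) inverse formula, and insertions of $r_j$ change the derivative by a term that vanishes precisely because of hypothesis \eqref{ri:eqn} (again via the chain-rule lemma and Leibniz, inserting $r_j$ in the middle of a word contributes $\pm\,($stuff$)\cdot\big(\frac{d}{dt}r_j(M_\bullet+tD_\bullet)|_{t=0}\big)\cdot($stuff$)$, and the middle factor is $0$).

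The main obstacle, and the place requiring the most care, is step (a) — well-definedness — because it is where the relator condition \eqref{ri:eqn} actually gets used and where one must be careful that the contribution of inserting a relator is a clean ``product rule'' expression with the relator's first-order term isolated as a factor. Concretely, if $w = u\, v$ and $w' = u\, r_j\, v$, then $\frac{d}{dt}w'(M_\bullet+tD_\bullet)|_{t=0} - \frac{d}{dt}w(M_\bullet+tD_\bullet)|_{t=0}$ must be shown to equal $u(M_\bullet)\cdot\big(\frac{d}{dt}r_j(M_\bullet+tD_\bullet)|_{t=0}\big)\cdot v(M_\bullet)$, using $r_j(M_\bullet)=\Id$; this is a direct Leibniz computation but one has to set up notation for ``differentiate one factor of a product at a time'' carefully to avoid sign or ordering errors. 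Everything else — the tangent-space membership, the two cocycle identities, and formula \eqref{zetag:eqn} — falls out of the single chain-rule lemma once it is stated cleanly, so I would invest the writing effort there. I expect the total proof to be short, essentially a careful bookkeeping argument around that lemma.
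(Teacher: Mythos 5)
Your ``if'' direction tracks the paper's own proof: define $\zeta(g)$ by formula~(\ref{zetag:eqn}) on a chosen word, derive the Leibniz identities~(\ref{zeta:eqn}) for this word-level $\zeta$, and get well-definedness by reducing a change of representative word to a finite sequence of insertions/deletions of trivial cancellations and of relators, the latter contributing nothing by hypothesis~(\ref{ri:eqn}). You also correctly locate well-definedness as the only place where~(\ref{ri:eqn}) is actually used.

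The ``only if'' direction as you wrote it has a genuine gap. You obtain~(\ref{ri:eqn}) by writing ``$r_j(\rho_t(g_1),\dots,\rho_t(g_h)) = \Id$ for all $t$'' and differentiating at $t=0$. But a cocycle $\zeta$ is, by definition, a map satisfying the algebraic identities~(\ref{zeta:eqn}); it need not be the derivative of any actual path of \emph{homomorphisms} $\rho_t$ — indeed the whole subject is about cocycles that may fail to integrate. If $\rho_t$ is merely some smooth path through each $M_i$ with velocity $D_i$ (which does exist), then $r_j(\rho_t(g_1),\dots,\rho_t(g_h))$ is not $\Id$ for $t\neq 0$, so there is nothing to differentiate, and your chain-rule lemma cannot rescue the step (it only says the $t=0$ derivative is curve-independent, not that the path can be taken to be a homomorphism). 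The correct route, which the paper takes, is to first prove~(\ref{zetag:eqn}) directly from the cocycle axioms: it holds for $w=g_i$ by definition, and extends to arbitrary words by induction on length via~(\ref{zeta:eqn}). Then specialize to $w=r_j$: since $r_j$ represents the identity element, $\zeta(r_j)=\zeta(e)=0$, which combined with~(\ref{zetag:eqn}) gives exactly~(\ref{ri:eqn}). Note also that the tangency of $D_i$ to $G$ at $M_i$ requires no deformation either: $z(g_i)=\zeta(g_i)M_i^{-1}\in\mathfrak{g}$ by definition of a cocycle, so $\zeta(g_i)\in\mathfrak{g}\,M_i=T_{M_i}G$.
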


\begin{proof}
Let $D_1,\ldots, D_h$ satisfy both conditions. We define $\zeta(g)$ using (\ref{zetag:eqn}). The definition is well-posed: indeed, if for every word $w$ we define 
$$\zeta(w) = \frac{d}{dt}w(M_1+tD_1,\ldots,M_h+tD_h)\Big|_{t=0}$$
then we easily deduce from the Leibnitz rule that 
$$\zeta(w_1w_2) = \zeta(w_1) \rho(w_2) + \rho(w_1) \zeta(w_2), \qquad \zeta (w^{-1}) = - \rho(w)^{-1} \zeta(w) \rho(w)^{-1}.$$
By hypothesis $\zeta(r_i)=0$ and of course $\rho(r_i) = I$. This implies easily that $\zeta$ vanishes on every word obtained from the relators by conjugations, products, and inverses. This in turn easily implies that $\zeta(w_1) = \zeta(w_2)$ whenever two words $w_1$ and $w_2$ indicate the same element $g$ of the group. 

Conversely, let $\zeta$ be a cocycle. We first prove (\ref{zetag:eqn}). The equality holds when $w= g_i$, and we deduce easily that it holds for any word $w$ using (\ref{zeta:eqn}). We then deduce (\ref{ri:eqn}) using $w=r_i$.
\end{proof}

We can view a cocycle $\zeta$ as an assignment of matrices $D_1,\ldots, D_h$ to the generators $g_1,\ldots, g_h$ that fulfill some requirements. A coboundary is determined by a vector $V\in \mathfrak{g}$ as $\zeta(g) = V \rho(g) - \rho(g) V$. We may pick a basis for $\mathfrak{g}$ and get a finite set of generators for 
${B^1(\pi_1(X), \mathfrak{g}_{\agg \rho})}$.

\subsection{Fundamental groupoids}
The theory introduced in the previous pages extends from fundamental groups to fundamental groupoids with roughly no variation. This extension will be useful for us to prove Theorem \ref{rigidity_existence:teo}.

Let $X$ be a path-connected topological space and $V\subset X$ a finite set of points. The \emph{fundamental groupoid} $\pi_1(X,V)$ relative to $V$ is the 
the set of continuous maps $\alpha: [0,1] \to X$ with extremal points in $V$, up to homotopy which fixes the extremal points. It is possible to concatenate two such paths if the ending point of the first is the initial point of the second. When we write $\alpha_2 \alpha_1$ we mean that the first path is $\alpha_1$ and the second one is $\alpha_2$, and concatenation is possibile. 

The fundamental groupoid has a trivial element $e_v$ for every $v\in V$. For every $v\in V$, the inclusion defines an injection $i_*\colon \pi_1(X,v) \hookrightarrow \pi_1(X,V)$.

A \emph{finite presentation} of a groupoid is defined in the same way as for groups, as a set of generators and relators
$$\langle\ g_1, \ldots, g_h\ |\ r_1, \ldots, r_k \ \rangle$$
where each $g_i$ is an element of the groupoid, each $r_j$ is a word in the $g_i^{\pm 1}$ that represents some trivial element, every element of the groupoid is represented as a word in the $g_i^{\pm 1}$, and two words $w_1, w_2$ represent the same element if and only if $w_1w_2^{-1}$ makes sense and is obtained from the relations by formal conjugations, inversions, and multiplications.

Let $V = \{v_0,\ldots, v_s\}$. For every $i=1,\ldots, s$ pick an arc $\alpha_i$ connecting $v_0$ and $v_i$. Given a finite presentation $\langle\ g_i \ |\  r_j\ \rangle$ for $\pi_1(X,v_0)$, we can construct one for $\pi_1(X,V)$ by adding the arcs $\alpha_1,\ldots, \alpha_s$ as generators. 

We will always suppose that $\pi_1(X,V)$ has a finite presentation.

\subsection{Representations and cocycles}
A \emph{groupoid representation} is a map
\[ \sigma: \pi_1(X,V) \to G \]
such that $\sigma( \beta \alpha) = \sigma( \beta) \sigma(\alpha)$ whenever it is possible to concatenate $\alpha$ and $\beta$. (Usually, a groupoid representation assigns a vector space to each $v\in V$ and sends elements to morphisms between these vectors spaces: here we simply assign the same vector space $\matR^{n+1}$ to every $v$ and require the morphisms to lie in $G=O^+(n,1)$.)

Given a grupoid representation $\sigma$, it is possible to define its deformations and the cocycles as we did in the previous section, the only difference being that multiplications should be considered only when they make sense. 

A cocycle $z$ is a map $z\colon \pi_1(X,V) \to \mathfrak{g}$ that fulfills the cocycle condition (\ref{cocycle:eqn}) for every pair $g,h \in \pi_1(X,V)$ of elements that can be multiplied. We denote the vector space of all cocycles as $Z^1(\pi_1(X,V), \mathfrak{g}_{\agg \sigma})$. We still have two versions $z$ and $\zeta$ of the same cocycle that differ only by right multiplication by $\sigma(g)^{-1}$. As in the previous case a coboundary is determined by a vector $V\in \mathfrak{g}$ as $\zeta(g) = V \sigma(g) - \sigma(g) V$ and the subspace of all coboundaries is denoted by $B^1(\pi_1(X,V), \mathfrak{g}_{\agg \sigma})$. The quotient of these two spaces is $H^1(\pi_1(X,V), \mathfrak{g}_{\agg \sigma})$.

Proposition \ref{Dh:prop} is still valid in this context, with the same proof. If we have a finite presentation of $\pi_1(X,V)$, we can determine all the cocycles in $H^1(\pi_1(X,V), \mathfrak{g}_{\agg \sigma})$ in their $\zeta$ version by assigning some matrices $D_i$ to the generators that fulfill the indicated requirements at the relators.

The representation $\sigma$ induces a representation $\rho = \sigma \circ i_*$ for $\pi_1(X,v_0)$. The spaces $Z^1(\pi_1(X,V), \mathfrak{g}_{\agg \sigma})$ and $Z^1(\pi_1(X,v_0), \mathfrak{g}_{\agg \rho})$ are related in a simple way: 

\begin{prop}\label{rel_gruppoide_gruppo:prop}
The inclusion $i$ induces a surjective map
\[  i^* \colon Z^1(\pi_1(X,V), \mathfrak{g}_{\agg \sigma}) \twoheadrightarrow Z^1(\pi_1(X,v_0), \mathfrak{g}_{\agg \rho}). \]
The dimension of its kernel is $\dim (G) \cdot (|V|-1)$.
\end{prop}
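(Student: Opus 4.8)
The plan is to reduce the statement to the presentation-level description of cocycles given by Proposition~\ref{Dh:prop}, which, as observed in the text, holds verbatim for groupoids. First I would fix a finite presentation
\[
\pi_1(X,V)=\langle\ g_1,\dots,g_h,\ \alpha_1,\dots,\alpha_s\ \mid\ r_1,\dots,r_k\ \rangle ,
\]
obtained, as explained above, from a presentation $\langle\, g_i\mid r_j\,\rangle$ of $\pi_1(X,v_0)$ by adjoining the arcs $\alpha_1,\dots,\alpha_s$ as generators and \emph{no} new relators; here $|V|=s+1$, and it is convenient to also set $\alpha_0=e_{v_0}$. The structural point to record is that every relator $r_j$ is a word in the $g_i^{\pm1}$ alone, so that the paths $\alpha_j\,\ell\,\alpha_i^{-1}$ with $\ell\in\pi_1(X,v_0)$ give a normal form for the elements of $\pi_1(X,V)$.

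Next I would apply Proposition~\ref{Dh:prop} on both sides. For $\pi_1(X,V)$ it says that a cocycle $\zeta\in Z^1(\pi_1(X,V),\mathfrak{g}_{\agg \sigma})$ is the same datum as matrices $D_i$ attached to $g_i$ and $E_j$ attached to $\alpha_j$, with $D_i$ tangent to $G$ at $\rho(g_i)$ and $E_j$ tangent to $G$ at $\sigma(\alpha_j)$, subject only to the first-order relator equations~(\ref{ri:eqn}). Since the $r_j$ involve no $\alpha$'s, these equations constrain only the $D_i$'s, and they are exactly the equations defining $Z^1(\pi_1(X,v_0),\mathfrak{g}_{\agg \rho})$ via Proposition~\ref{Dh:prop} for the group. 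This yields a natural linear isomorphism
\[
Z^1(\pi_1(X,V),\mathfrak{g}_{\agg \sigma})\ \cong\ Z^1(\pi_1(X,v_0),\mathfrak{g}_{\agg \rho})\ \oplus\ \bigoplus_{j=1}^{s}T_{\sigma(\alpha_j)}G ,
\]
the first coordinate recording the values $\zeta(g_i)=D_i$. Since $i^*\zeta$ is by definition the restriction of $\zeta$ to loops based at $v_0$, which on generators is $g_i\mapsto D_i$, the map $i^*$ is precisely the projection onto the first summand.

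The conclusion is then immediate: $i^*$ is surjective, and $\ker i^*\cong\bigoplus_{j=1}^{s}T_{\sigma(\alpha_j)}G$, whence $\dim\ker i^* = s\cdot\dim G = (|V|-1)\dim G$. If one prefers to avoid quoting Proposition~\ref{Dh:prop}, the same two facts can be checked by hand: for surjectivity, extend a given $z_0\in Z^1(\pi_1(X,v_0),\mathfrak{g}_{\agg \rho})$ by setting $z(\alpha_i)=0$ and $z(\alpha_j\,\ell\,\alpha_i^{-1})=\sigma(\alpha_j)\,z_0(\ell)\,\sigma(\alpha_j)^{-1}$, then verify that this is well defined on the normal form and satisfies the cocycle condition~(\ref{cocycle:eqn}) across concatenations; for the kernel, a cocycle vanishing on $\pi_1(X,v_0)$ is completely determined by the $s$ values $\zeta(\alpha_i)\in T_{\sigma(\alpha_i)}G$, and conversely any such assignment extends to a cocycle. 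I expect the only mildly delicate point to be this bookkeeping with the cocycle identity --- in particular keeping in mind that here $\sigma(\beta)\in G$ even when $\beta$ joins distinct points of $V$, so the conjugations $\sigma(\alpha_j)(-)\sigma(\alpha_j)^{-1}$ make literal sense; the dimension count is then just $\dim T_{\sigma(\alpha_j)}G=\dim G$ summed over the $|V|-1$ arcs.
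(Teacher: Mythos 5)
Your proof is correct and follows essentially the same route as the paper's: both fix the presentation $\langle\, g_i,\alpha_k\mid r_j\,\rangle$ of $\pi_1(X,V)$ obtained by adjoining the arcs $\alpha_k$ with no new relators, invoke Proposition~\ref{Dh:prop} to identify cocycles with assignments to generators satisfying the relator equations, and observe that since the $r_j$ involve only the $g_i$'s the values on the $\alpha_k$'s are unconstrained, giving surjectivity of $i^*$ and a kernel of dimension $(|V|-1)\dim G$. Your direct-sum formulation and the optional by-hand verification are just a more explicit packaging of the same argument.
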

\begin{proof}
Pick a finite presentation $\langle\ g_i\ |\ r_j\ \rangle$ for $\pi_1(X,v_0)$ and some arcs $\alpha_k$ connecting $v_0$ to $v_k$. Then $\langle\ g_i, \alpha_k\ |\ r_j \ \rangle$ is a presentation for $\pi_1(X,V)$. A cocycle $\zeta$ for $\rho$ extends to a cocycle $\zeta'$ for $\sigma$ by assigning to each $\alpha_k$ and arbitrary matrix tangent to $G$ in $\sigma(\alpha_k)$. The resulting $\zeta'$ is a cocycle by Proposition \ref{Dh:prop}, since both presentations have the same relators. There are $|V|-1$ arcs $\alpha_k$ and a space of dimension $\dim (G)$ to choose from for each arc.
\end{proof}

The previous proposition can be upgraded for the representation $\rho$ we are interested in. 

\begin{cor}\label{rel_gruppoide_gruppo_2:cor}
If the image of $\rho$ has limit set $S^{n-1}$, 
the inclusion $i$ induces a surjective homomorphism
\[  i^* \colon H^1(\pi_1(X,V), \mathfrak{g}_{\agg \sigma}) \longrightarrow H^1(\pi_1(X,v_0), \mathfrak{g}_{\agg \rho}). \]
The dimension of its kernel is $\dim (G) \cdot (|V|-1)$.
\end{cor}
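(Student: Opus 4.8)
The plan is to deduce the corollary from Proposition \ref{rel_gruppoide_gruppo:prop} by pushing the statement from cocycles down to cohomology, the only extra input being that the limit-set hypothesis forces the restriction of $i^*$ to coboundaries to be an isomorphism (and not merely surjective).

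First I would record that $i^*$ carries coboundaries to coboundaries. If $\zeta\in B^1(\pi_1(X,V),\mathfrak g_{\agg\sigma})$ is the coboundary of $V\in\mathfrak g$, i.e.\ $\zeta(g)=V\sigma(g)-\sigma(g)V$, then restricting to loops based at $v_0$ gives $g\mapsto V\rho(g)-\rho(g)V$, which is the coboundary $\psi(V)$ of Section \ref{Infinitesimal_deformation:section}. Writing $\psi'\colon\mathfrak g\to B^1(\pi_1(X,V),\mathfrak g_{\agg\sigma})$ for the tautologically surjective map $V\mapsto\bigl(g\mapsto V\sigma(g)-\sigma(g)V\bigr)$, this computation says precisely $i^*\circ\psi'=\psi$. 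Since the image of $\rho$ has limit set $S^{n-1}$, Proposition \ref{B:prop} gives that $\psi$ is an isomorphism; together with $i^*\circ\psi'=\psi$ this immediately forces $\psi'$ to be injective (hence an isomorphism onto $B^1(\pi_1(X,V),\mathfrak g_{\agg\sigma})$), and it forces the restriction $i^*\colon B^1(\pi_1(X,V),\mathfrak g_{\agg\sigma})\to B^1(\pi_1(X,v_0),\mathfrak g_{\agg\rho})$ to be an isomorphism as well (it equals $\psi\circ(\psi')^{-1}$). This is the one and only place the hypothesis on the limit set enters.

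Next I would assemble the commutative ladder with exact rows $0\to B^1\to Z^1\to H^1\to 0$ for the groupoid $\pi_1(X,V)$ and for the group $\pi_1(X,v_0)$, with all three vertical arrows given by $i^*$. By Proposition \ref{rel_gruppoide_gruppo:prop} the middle arrow is surjective with kernel of dimension $\dim(G)\cdot(|V|-1)$, and by the previous paragraph the left arrow is an isomorphism. Surjectivity of $i^*$ on $H^1$ is then immediate, since a cocycle on $\pi_1(X,v_0)$ lifts to one on $\pi_1(X,V)$ and passing to cohomology classes gives the desired lift. For the kernel, observe that the quotient map $Z^1(\pi_1(X,V),\mathfrak g_{\agg\sigma})\to H^1(\pi_1(X,V),\mathfrak g_{\agg\sigma})$ restricts to an \emph{injection} on $\ker(i^*|_{Z^1})$, because $\ker(i^*|_{Z^1})\cap B^1(\pi_1(X,V),\mathfrak g_{\agg\sigma})=0$ (the left vertical arrow being injective); and its image is exactly $\ker(i^*|_{H^1})$, since if a class $[\zeta]$ dies in $H^1$ of the group then $i^*\zeta=\psi(V)$ for some $V$, and $\zeta-\psi'(V)$ is cohomologous to $\zeta$ and lies in $\ker(i^*|_{Z^1})$. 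Hence $\ker(i^*|_{H^1})\cong\ker(i^*|_{Z^1})$ has dimension $\dim(G)\cdot(|V|-1)$. Equivalently, one applies the snake lemma to the ladder: the left vertical map being an isomorphism makes the connecting homomorphism vanish and identifies the kernels and cokernels of the middle and right maps.

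The argument is essentially formal once Propositions \ref{B:prop} and \ref{rel_gruppoide_gruppo:prop} are available; the only subtlety — the ``main obstacle'', such as it is — is to verify that the restriction of $i^*$ to coboundaries is an isomorphism rather than just surjective, as this is exactly what allows the kernel computation on $Z^1$ to pass verbatim to $H^1$. Everything else is diagram chasing with finite-dimensional vector spaces.
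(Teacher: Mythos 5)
Your proof is correct and takes essentially the same route as the paper: both rely on the fact that the limit-set hypothesis (via Proposition \ref{B:prop}) makes the restriction of $i^*$ to coboundaries an isomorphism, and then transfer the surjectivity and kernel-dimension count from the $Z^1$ level (Proposition \ref{rel_gruppoide_gruppo:prop}) to the $H^1$ level. The paper's version is terser — it simply notes both $B^1$ spaces have dimension $\dim G$ and leaves the diagram chase implicit — whereas you spell out the snake-lemma argument and the needed commutativity $i^*\circ\psi'=\psi$; the substance is identical.
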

\begin{proof}
The spaces $B^1(\pi_1(X,V), \mathfrak{g}_{\agg \sigma})$ and $B^1(\pi_1(X,v_0), \mathfrak{g}_{\agg \rho})$ have both dimension equal to $\dim G$ by Proposition \ref{B:prop}. Hence the map $i^*$ sends the former to the latter isomorphically.
\end{proof}

\subsection{Cube complexes}
Let $C$ be a finite connected cube complex. It is natural here to consider its set of vertices $V$ and the fundamental groupoid $\pi_1(C,V)$. This has a natural presentation
\begin{equation} \label{pres:eqn}
\langle\ g_1,\ldots, g_h\ |\  r_1,\ldots, r_k\ \rangle
\end{equation}
where $g_1,\ldots, g_h$ are the edges of $C$, oriented arbitrarily, and $r_1,\ldots, r_k$ are 4-letters words in the $g_i^{\pm 1}$ arising from the square faces of $C$, oriented arbitrarily. 

Let $v$ be a fixed vertex of $C$. To pass from the presentation (\ref{pres:eqn}) of $\pi_1(C,V)$ to one for the fundamental group $\pi_1(C,v)$ it suffices to choose a maximal tree $T$ in the 1-skeleton of $C$ and to add a relator $g_i$ representing every edge contained in $T$.

\subsection{Proving the rigidity of $\widetilde{M}$}
Here we explain the main method to prove infinitesimal rigidity.
The holonomy of $\widetilde{M}$ is a homomorphism
\[ \rho: \pi_1(\widetilde{M}) \longrightarrow \Isom(\matH^n) .\]
The manifold $\widetilde{M}$ deformation retracts onto the infinite cube complex $\widetilde{C}$.
The fundamental group $\pi_1(\widetilde{M})$ could be not finitely presented (this is the case in dimension 4, see \cite{BM}) and $\widetilde{C}$ is infinite, so the techniques introduced in the previous section do not apply here. However, it will be sufficient to consider a finite portion $C'$ of $\widetilde{C}$ and use the following.
\begin{prop} \label{teorema_subcom_finito}
Let $C'$ be a finite subcomplex of $\widetilde{C}$ such that
\[ i_*:\pi_1(C', v_0) \twoheadrightarrow \pi_1(\widetilde{C}, v_0) \]
is surjective. 
If $ \rho' = \rho \circ i_* $ is infinitesimally rigid then $\rho$ is infinitesimally rigid.
\end{prop}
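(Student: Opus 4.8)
The plan is to show that the surjection $i_*$ induces, by pullback, an \emph{injection} on the spaces of infinitesimal deformations, so that vanishing downstairs forces vanishing upstairs. Write $\phi := i_* \colon \pi_1(C',v_0) \twoheadrightarrow \pi_1(\widetilde{C},v_0) = \pi_1(\widetilde{M},v_0)$, so that by definition $\rho' = \rho \circ \phi$. Here I would work with $\pi_1(\widetilde C, v_0)$ throughout (using that $\widetilde M$ retracts onto $\widetilde C$), and phrase everything directly in terms of the abstract cocycle condition \eqref{cocycle:eqn}, since $\pi_1(\widetilde M)$ need not be finitely presented.

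First I would define the pullback on cocycles. Given $z \in Z^1(\pi_1(\widetilde{M}), \mathfrak{g}_{\agg \rho})$, set $(\phi^* z)(a) := z(\phi(a))$ for $a \in \pi_1(C',v_0)$. Since $\phi$ is a homomorphism and $\rho' = \rho\circ\phi$, the computation
\[
(\phi^*z)(ab) = z\bigl(\phi(a)\phi(b)\bigr) = z(\phi(a)) + \rho(\phi(a))\, z(\phi(b))\, \rho(\phi(a))^{-1} = (\phi^*z)(a) + \rho'(a)\,(\phi^*z)(b)\,\rho'(a)^{-1}
\]
shows $\phi^*z \in Z^1(\pi_1(C'), \mathfrak{g}_{\agg \rho'})$, so $\phi^*$ is a linear map $Z^1(\pi_1(\widetilde M),\mathfrak g_{\agg\rho}) \to Z^1(\pi_1(C'),\mathfrak g_{\agg\rho'})$. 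It is injective: if $\phi^*z = 0$ then $z$ vanishes on $\mathrm{im}(\phi) = \pi_1(\widetilde M)$, so $z = 0$.

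Next I would track coboundaries. For $V \in \mathfrak g$, the coboundary $\partial_\rho V\colon g \mapsto V - \rho(g)V\rho(g)^{-1}$ satisfies $(\phi^*\partial_\rho V)(a) = V - \rho(\phi(a))V\rho(\phi(a))^{-1} = (\partial_{\rho'}V)(a)$, so $\phi^*$ sends $B^1(\pi_1(\widetilde M),\mathfrak g_{\agg\rho})$ into $B^1(\pi_1(C'),\mathfrak g_{\agg\rho'})$ and hence descends to $\overline{\phi^*}\colon H^1(\pi_1(\widetilde M),\mathfrak g_{\agg\rho}) \to H^1(\pi_1(C'),\mathfrak g_{\agg\rho'})$. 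The crucial step is to check $\overline{\phi^*}$ remains injective: if $z \in Z^1(\pi_1(\widetilde M),\mathfrak g_{\agg\rho})$ has $\phi^*z = \partial_{\rho'}V$, then for any $g \in \pi_1(\widetilde M)$, writing $g = \phi(a)$ by surjectivity, $z(g) = (\phi^*z)(a) = V - \rho'(a)V\rho'(a)^{-1} = V - \rho(g)V\rho(g)^{-1}$, so $z = \partial_\rho V$ is a coboundary; thus $\ker\overline{\phi^*} = 0$.

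Finally, if $\rho'$ is infinitesimally rigid then $H^1(\pi_1(C'),\mathfrak g_{\agg\rho'}) = 0$, and injectivity of $\overline{\phi^*}$ forces $H^1(\pi_1(\widetilde M),\mathfrak g_{\agg\rho}) = 0$, i.e.\ $\rho$ is infinitesimally rigid. I do not expect a genuine obstacle — this is a routine pullback argument — but the one point requiring care is precisely that $\pi_1(\widetilde M)$ may fail to be finitely presented, so the entire argument must be run at the level of the abstract cocycle and coboundary spaces rather than through the matrix-on-generators description of Proposition \ref{Dh:prop}; the manipulations above are valid in that generality.
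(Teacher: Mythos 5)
Your proof is correct and takes essentially the same approach as the paper: the paper's proof simply asserts that the surjection $i_*$ induces an injection $i^*$ on $H^1$ and concludes, while you spell out the verification (well-definedness on cocycles, injectivity on $Z^1$, preservation of coboundaries, and injectivity on cohomology via surjectivity of $\phi$). Your added remark that the argument must run at the level of abstract cocycle spaces because $\pi_1(\widetilde M)$ may not be finitely presented is a useful clarification consistent with the paper's framing.
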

\begin{proof}
The surjective homomorphism $i_*$ induces an injective homomorphism 
$$i^* \colon H^1(\pi_1(\widetilde{C},v_0), \mathfrak g_{\agg \rho}) \longrightarrow H^1(\pi_1(C',v_0), \mathfrak g_{\agg \rho\circ i_*}).$$
If the target space is trivial, the domain also is.
\end{proof}

Recall that every $k$-cube of $\widetilde{C}$ has some level. The finite subcomplex $F_{[m,n]}$ defined in Subsection \ref{Fnm:subsection} is connected, and by Corollary \ref{Fnmsurj:cor} the inclusion $F_{[m,n]} \rightarrow \widetilde{C}$ is $\pi_1$-surjective (we suppose that $n-m \geq 2$). In the cases described in Section \ref{dim_4:section}-\ref{dim_5:section} we will prove that the representation $\rho' = \rho \circ i_*$ is infinitesimally rigid.

Let $V$ (respectively, $V'$) be the set of vertices of $\widetilde{C}$ (respecively, $F_{[m,n]}$). The representation $\rho$ (respectively, $\rho'$) extends to a representation $\sigma$ (respectively, $\sigma'$) of the groupoid $\pi_1(\widetilde{C},V)$ (respectively, $\pi_1(F_{[m,n]},V')$) that is easy to describe.  
Consider the orbifold-covering
\[ p: \widetilde{M} \longrightarrow P \]
where $P$ is the polytope used in the construction of the manifold, interpreted as an orbifold $P = \matH^n/_\Gamma$. Here $\Gamma$ is the Coxeter group generated by reflections along the facets of $P$. The map $p$ sends every vertex of $V$ to the center $v$ of $P$.
It induces a map
\[ p_*: \pi_1(\widetilde{C}, V) \longrightarrow \pi_1^{orb}(P, v) = \Gamma. \]
Consider the natural presentation $\langle \ g_i\ | \ r_j\ \rangle$ of $\pi_1(\widetilde{C},V)$, with generators and relators corresponding to oriented edges and squares of $\widetilde{C}$. Every $g_i$ corresponds to an oriented edge of $\widetilde{C}$, which in turn determines a facet $F$ of $P$. The map $p_*$ sends $g_i$ to the reflection $r_F$ along $F$. The orientation of $g_i$ is not important since $r_F^2 = \id$.

The map $p_*$ is very convenient because it sends every generator $g_i$ to a reflection $r_F$. We write $\sigma = p_*$ and denote by $\sigma'$ its restriction to $\pi_1(F_{[m,n]},V')$.

We now need to calculate the dimension of $H^1(\pi_1(F_{[m,n]},V'), \mathfrak{g}_{\agg \sigma'})$. To do this we create a linear system and we study its solutions using MATLAB.

\section{The numerical analysis}\label{numerical:section}

Here we describe in detail the methods used and the results obtained.

\subsection{The algorithm}
In this subsection we explain the algorithm that we used to build up and solve our linear system. The code can be found at \cite{code}. To understand the numbers involved in the computation, we give names to certain quantities:

\begin{itemize}
    \item we call $n$ the dimension of the ambient space $\matH^n$ of the polytope $P$;
    \item we call $n_{\text{facets}}$ the number of facets of the polytope $P$;
    \item we call $n_{\text{2-cofaces}}$ the number of faces of codimension 2 of $P$;
    \item we call $c$ the number of colours used.
\end{itemize}

\subsubsection{Getting the combinatorics of $F_{[m,n]}$}
We choose to work with the subcomplex $F_{[-1,2s-1]}$, where $s$ is a positive integer. As $s$ grows, we are considering a bigger part of $\widetilde{C}$. Keep in mind that the deck transformation acts on $\widetilde{C}$ translating levels by $\pm 2$.

The vertices of $F_{[-1,2s-1]}$ are $2^{c-1} \cdot (2s+1)$ points.

We have written a Python code that enumerates the edges of the cube complex $C$ associated to $M$. As we already pointed out, these edges are in correspondence with the facets of the polytopes $P_v$ where $v$ is even.
The edges of $F_{[-1,2s-1]}$ will be $s$ copies of the edges of $C$: each edge of $C$ has a unique lift with one vertex of level $2i$ for $i=0,\ldots,s-1$. The number of edges of $F_{[-1,2s-1]}$ will be
\[ n_{\text{facets}} \cdot 2^{c-1} \cdot s. \]

The edges of $F_{[-1,2s-1]}$ can be used to provide a set of generators of the fundamental groupoid $\pi_1(F_{[-1,2s-1]},V')$. To promote a list of edges to a set of generators we need to orient each one of them. This orientation is used to interpret edges as paths, and should not be confused with the one given by the state of the polytope. We orient each edge from the vertex of even level towards the vertex of odd level.
Each generator $g_i$ is sent by $\sigma'$ to a reflection $r_F$ along a facet $F$ of the polytope $P$, that we denote by $M_i$ to be consistent with Proposition \ref{Dh:prop}.

Then we need to encode the squares of $F_{[-1,2s-1]}$. We start by getting a list of the squares of $C$. Then for each square we consider the lifts that have vertices with level in $[-1,2s-1]$. See Figures \ref{Hyperbolic_Octahedron:fig} - \ref{sqares_oct:fig} for an example.

\begin{figure}
 \begin{center}
  \includegraphics[width = 4cm]{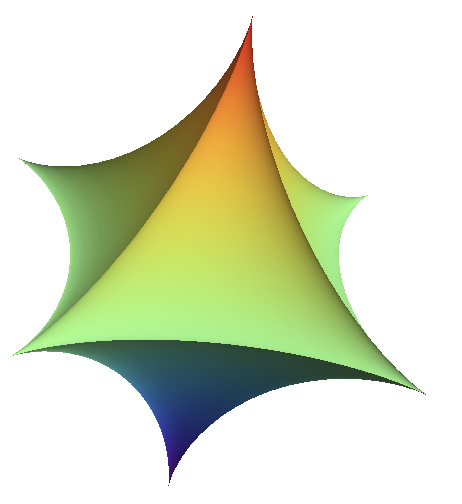}
 \end{center}
 \nota{The right-angled hyperbolic regular octahedron. Figure downloaded from \href{https://commons.wikimedia.org/wiki/File:Hyperbolic_Octahedron.jpg}{Wikimedia} and licensed under the \href{https://creativecommons.org/licenses/by-sa/3.0/deed.en}{Creative Commons Attribution-Share Alike}.}
 \label{Hyperbolic_Octahedron:fig}
\end{figure}

\begin{figure}
 \begin{center}
  \includegraphics[width = 8cm]{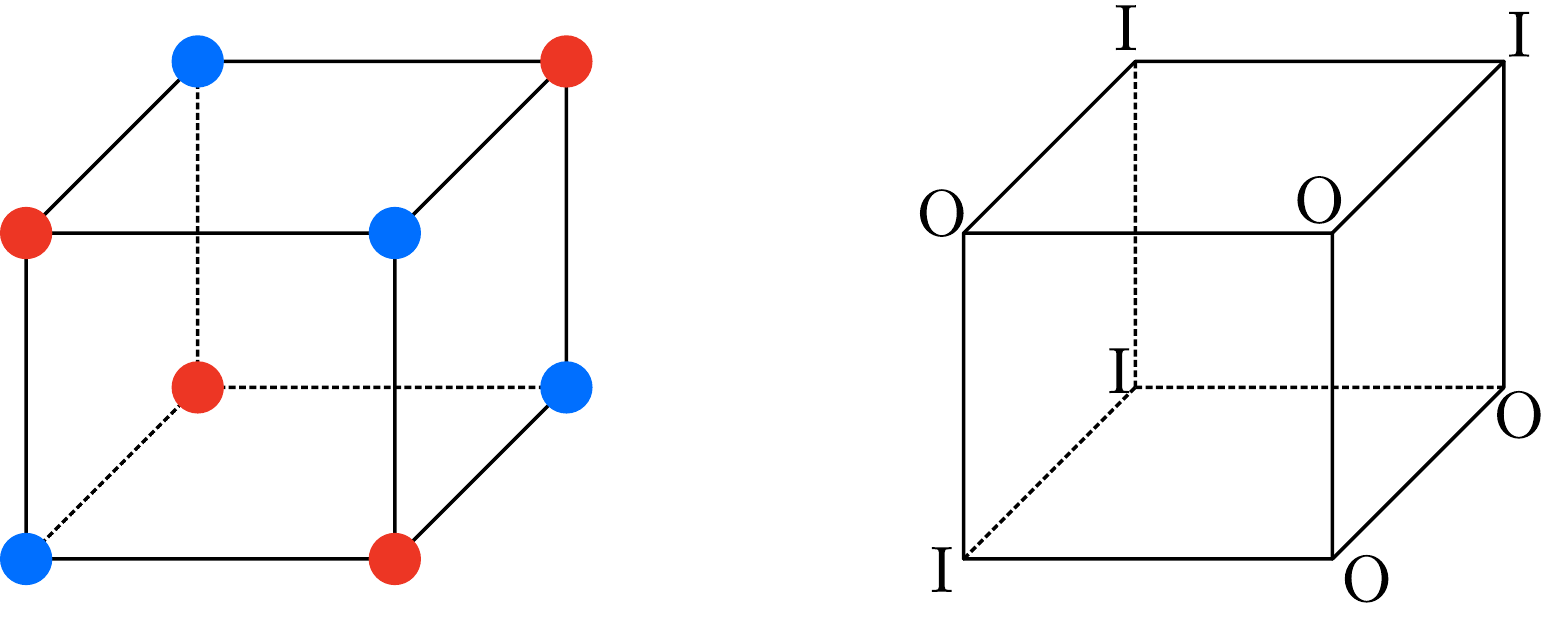}
 \end{center}
 \nota{The colouring and the state we consider on the octahedron, already described in \cite{JNW}. The cube depicted is the dual of the boundary of the octahedron, hence vertices correspond to facets and edges correspond to edges. There are no faces of dimension 2 because the vertices of the octahedron are ideal.}
 \label{colouring_and_state_oct:fig}
\end{figure}

\begin{figure}
 \begin{center}
  \includegraphics[width = 5cm]{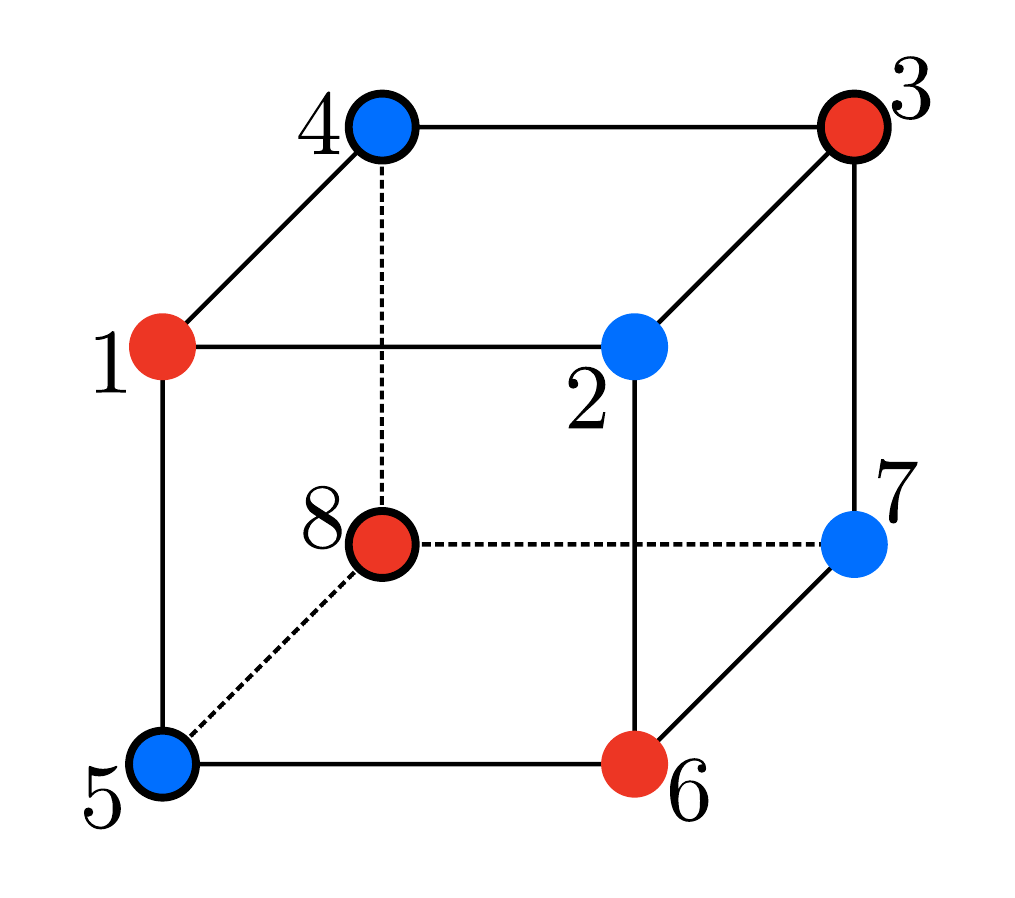}
 \end{center}
 \nota{One way to encode at the same time the colouring and the state. The contour of a vertex is enhanced (resp. not enhanced) if the corresponding facet has letter I (resp. O) in the state. We also enumerate the facets. We need this in order to list all the edges of the cube complexes $C$ and $\Tilde{C}$.}
 \label{colostate_oct:fig}
\end{figure}

\begin{figure}
 \begin{center}
  \includegraphics[width = 8cm]{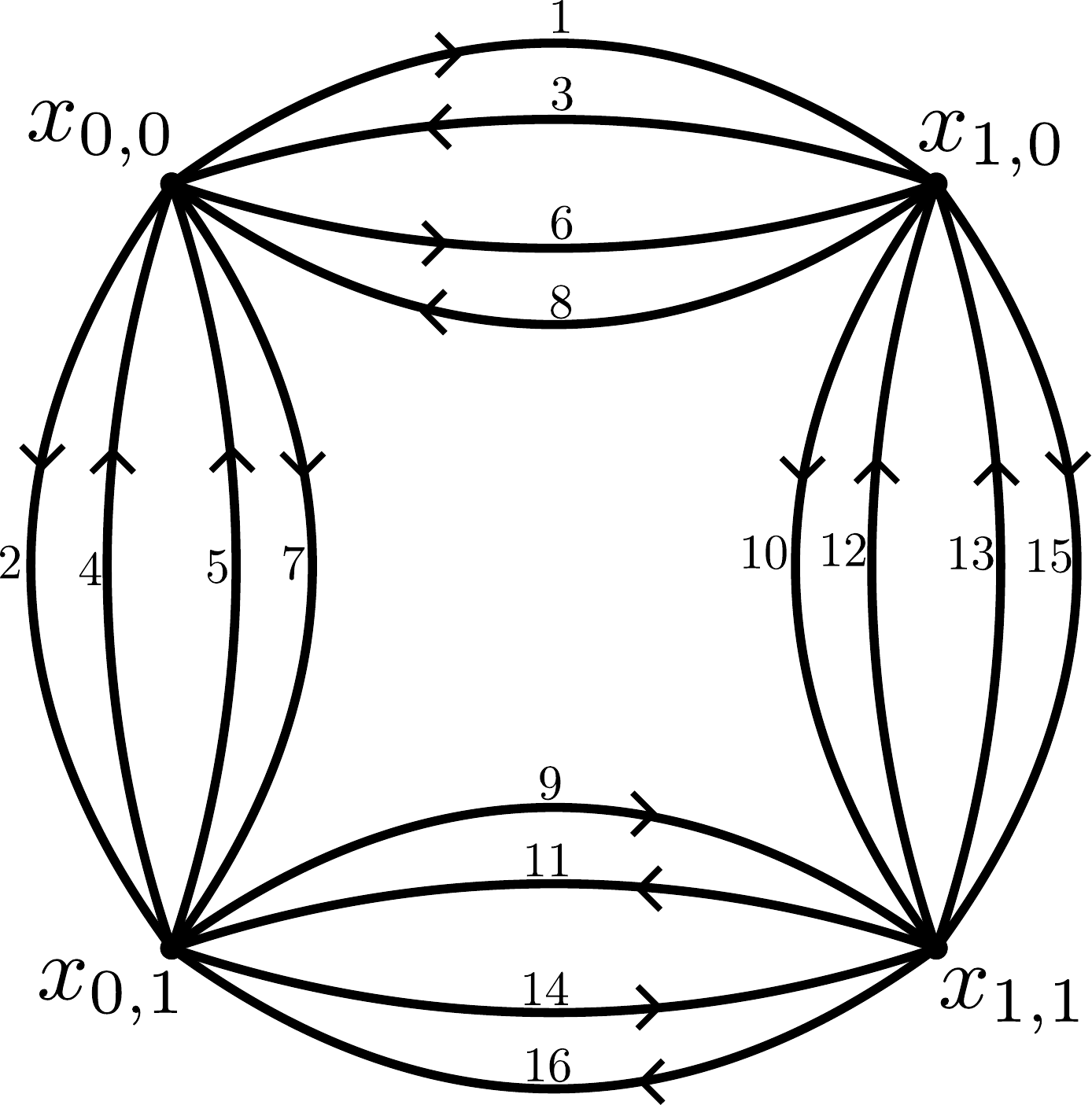}
 \end{center}
 \nota{The 1-skeleton of the cube complex $C$. The orientation on the edges is the one given by the state. In order to get the full $C$ we should add 12 squares. The enumeration of the edges is obtained in the following way: we enumerate the facets of $P$ (see Figure \ref{colostate_oct:fig}). The edges coming from the vertex $x_{0}$ are in correspondence with these facets. We then enumerate the vertices $x_{v}$ with $v$ even (in this case only $x_{1,1}$) and extend the enumeration on the edges coming from them.}
 \label{cube_compl_oct_M:fig}
\end{figure}

\begin{figure}
 \begin{center}
  \includegraphics[width = 9cm]{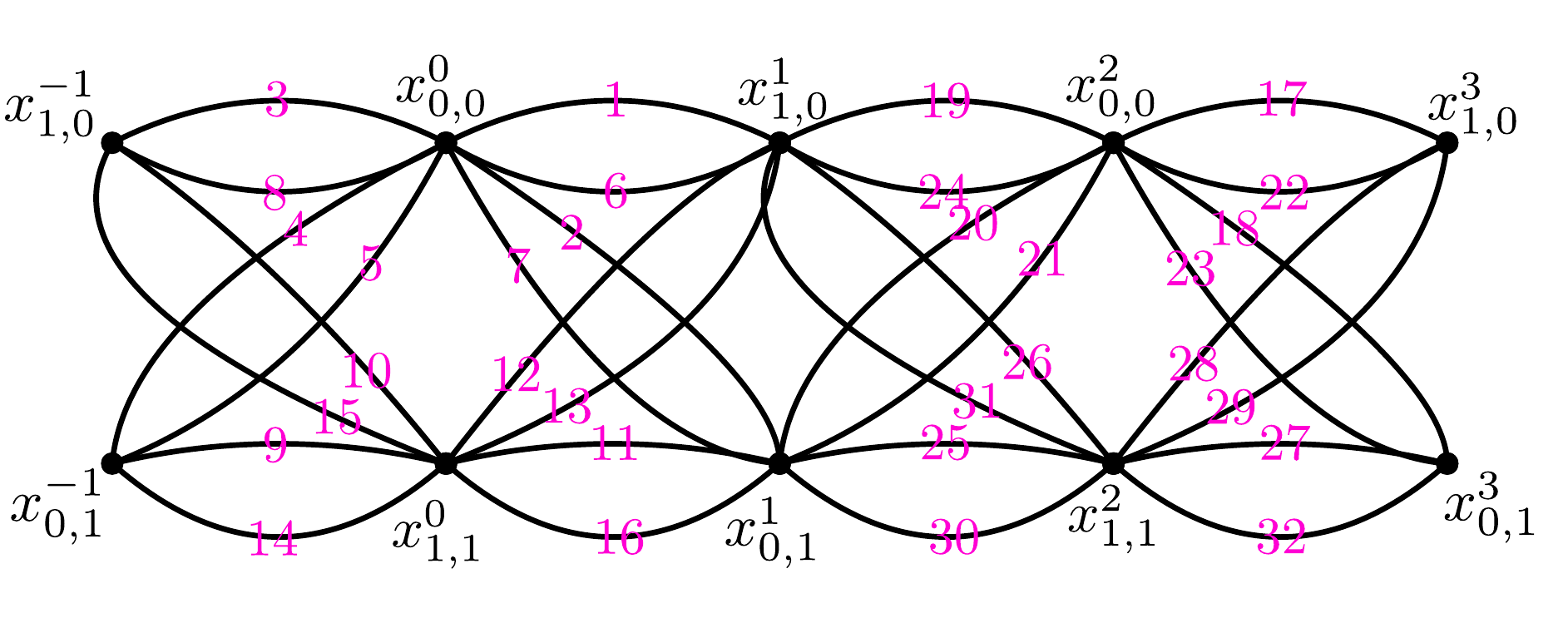}
 \end{center}
 \nota{The 1-skeleton of $F_{[-1,3]}$. The enumeration of edges follows the one of Figure \ref{cube_compl_oct_M:fig}: the edges with one vertex of level zero are in correspondence with the ones of $C$, and there is a copy of each of them with one vertex at level 2.}
 \label{F-13:fig}
\end{figure}

\begin{figure}
 \begin{center}
  \includegraphics[width = 8cm]{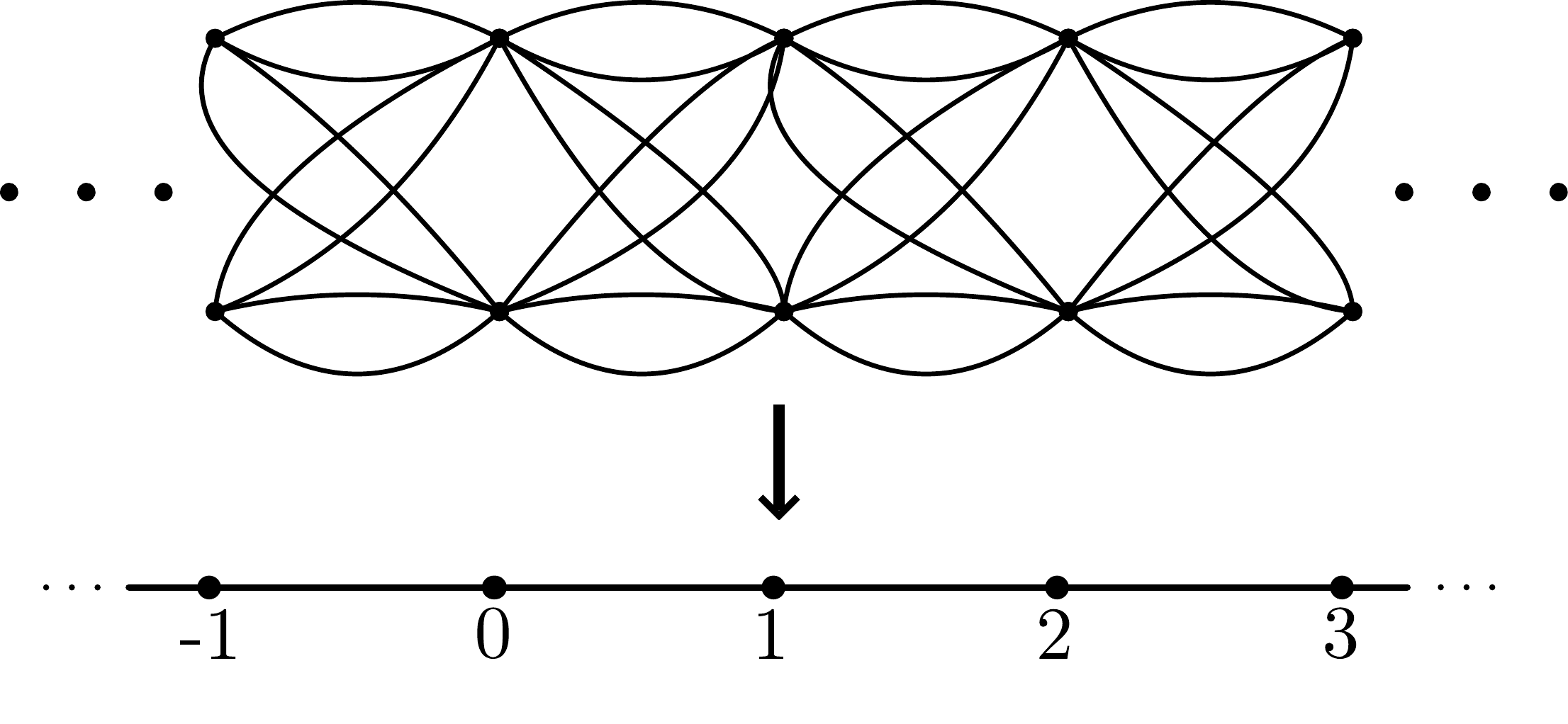}
 \end{center}
 \nota{One portion of the infinite 1-skeleton of the cube complex $\Tilde{C}$.}
 \label{cube_compl_oct_C_tilde:fig}
\end{figure}

\begin{figure}
 \begin{center}
  \includegraphics[width = 10cm]{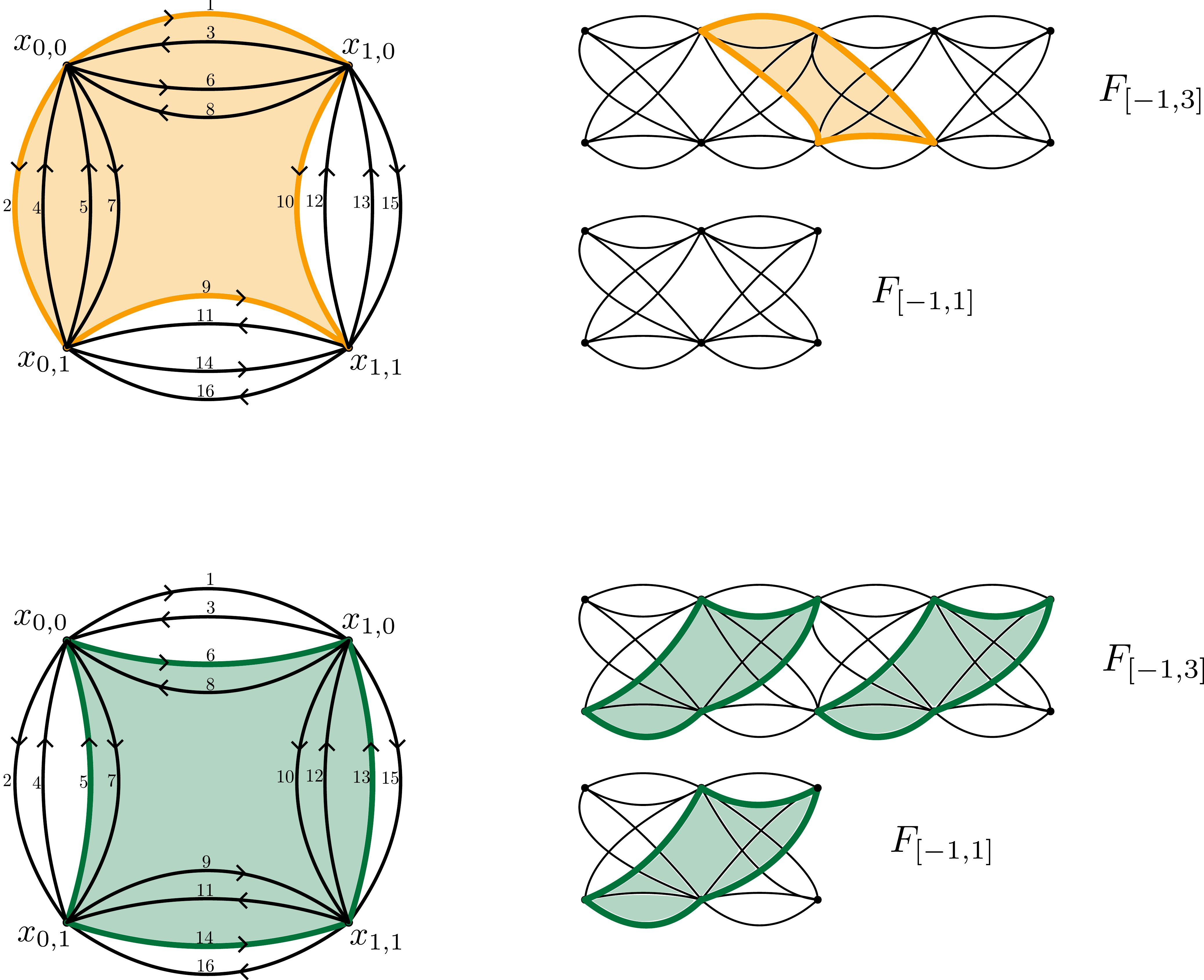}
 \end{center}
 \nota{Two squares and their lifts in $F_{[-1,1]}$ and $F_{[-1,3]}$.}
 \label{sqares_oct:fig}
\end{figure}

\subsubsection{Creating the linear system}\label{System:subsubsection}

We can now write the linear system with MATLAB following Proposition \ref{Dh:prop}. The dimension of the kernel of this linear system will be the dimension of $Z^1(\pi_1(F_{[m,n]},V'), \mathfrak{g}_{\agg \sigma'})$. We have a deformation $D_i$ for every generator $g_i$, and this gives $$ n_{\text{facets}} \cdot 2^{c-1} \cdot s \cdot (n+1)^2 $$ variables, since each $D_i$ is an unknown matrix $(n+1, n+1)$. 
\\
\\
\\
\\

Following Proposition \ref{Dh:prop}, the deformations have to fulfill two types of linear equations:
\begin{itemize}
    \item they must lie in the tangent space at $M_i$. This is achieved by imposing the equation
    \[ D_i^t \cdot J\cdot M_i + (J\cdot M_i)^t \cdot D_i=0, \]
    where $J$ is the diagonal matrix $(1,\ldots,1,-1)$;
    \item they have to solve the equations given by the relations. We have one equation for each square in $F_{[-1,2s-1]}$. There are a couple of tricks that can be used to simplify this type of equations; we make an example to show them explicitly. Let $Q$ be a square with boundary $g_1g_2^{-1}g_3g_4^{-1}$. The corresponding equation will be:
    \[ D_1 M_2^{-1} M_3 M_4^{-1} + M_1 (-M_2^{-1}D_2M_2^{-1}) M_3 M_4^{-1} + \] \[ + M_1 M_2^{-1} D_3 M_4^{-1} +  M_1 M_2^{-1} M_3 (-M_4^{-1}D_4M_4^{-1}) = 0.\]
    Every generator is sent by $\sigma'$ to a reflection, hence $M_i=M_i^{-1}$. In this way we avoid matrix inversions that are computationally heavy and could add numerical noise to the problem.
    Furthermore, we know that opposite sides of one square always go to the same matrix $M_i$ (because they correspond to the same facet of $P$). We can then simplify the equation using $M_1=M_3$ and $M_2=M_4$. It is also true that $M_1M_2=M_2M_1$ because they correspond to adjacent facets of a right-angled polytope.
    In the end we obtain:
    \[ D_1 M_1 - M_1 M_2 D_2 M_1 + M_1 M_2 D_3 M_2 - D_4 M_2 = 0.\]
    These simplifications helped a lot in numerical computations, and they were possible only because we used the groupoid structure. They also allow us to make the symbolical computations run faster.
\end{itemize}
Notice that these are matrix equations, so each of them represents actually $(n+1)^2$ equations (one for every entry of the matrix). By virtue of Proposition \ref{strata_number:prop}, the number of squares in $C$ is \[ n_{\text{2-cofaces}} \cdot 2^{c-2}. \]  We approximate the number of squares of $C$ that have a lift in $F_{[-1,1]}$ with one half of this value (this value is often correct for symmetric reasons. In any case, it is a good approximation), see Figure \ref{sqares_oct:fig}. Hence, the number or squares of $F_{[-1,2s-1]}$ is approximately
\[  n_{\text{2-cofaces}} \cdot 2^{c-2} \cdot \tonde*{s-\frac{1}{2}}. \] 

The linear system that we obtain has size that is approximately
\[ (n+1)^2 \tonde*{ n_{\text{facets}}  2^{c-1}  s + n_{\text{2-cofaces}} 2^{c-2} \tonde*{s-\frac{1}{2}} } \times (n+1)^2 n_{\text{facets}}  2^{c-1}  s  . \]

\subsubsection{The MATLAB rank function}
Once we have built the linear system, we need to compute its rank. This can be done in two different ways in MATLAB.
\begin{itemize}
    \item If the matrix is a symbolic matrix, the rank is calculated in a rigorous way. This is more time-consuming and requires more RAM in order to be carried out.
    \item If the matrix is a double matrix (\emph{i.e.}\ its entries are numerical values in the double precision) the rank function does the following: it calculates the singular values of the matrix (see \cite{svd}) and counts the ones that are greater than a tolerance value (the standard tolerance used by MATLAB depends on the size and on the norm of the matrix). The result is not rigorous. Its reliability can be estimated using the gap between the singular values greater than the tolerance and the smaller ones. In particular, a measure of the reliability of the computation is given by $\sfrac{\Sigma_{\text{min}}}{\sigma_{\text{max}}}$, where $\Sigma_{\text{min}}$ is the smallest singular value greater than the tolerance and $\sigma_{\text{max}}$ is the greatest singular value smaller than the tolerance. When the gap is big, the result is reliable. In our cases this gap is always big enough to let us trust the result, as we will show in detail.
    The Singular Value Decomposition algorithm is more time consuming than some alternatives, but it is also the most reliable.
\end{itemize}
We had not the time and the resources to carry out all the computations in the symbolic form for all the examples that we had. However, if we focus on one specific case, we are probably able to compute it in a rigorous way. For the 5-dimensional example (that is probably the most interesting one) we needed to use ad-hoc simplifications of the linear system in order to carry out the computations symbolically.

\subsection{Applying the Algorithm}
Here we describe the results obtained, that prove Theorem \ref{rigidity_existence:teo}.

\subsubsection{An example in dimension 3: checking known results}
We start by using the algorithm on a colouring and a state on the right-angled ideal octahedron in dimension 3, where all the results can be checked using the theory of hyperbolic 3-manifolds.

The colouring and the state $s_{0}$ are shown in Figure \ref{colouring_and_state_oct:fig}. The manifold obtained from this colouring is the minimally twisted chain link with six components. Using the methods in \cite{BM}, it is easy to show that this state induces a fibration on $S^1$. The fiber $N$ of this map is a six-punctured sphere.

The infinite cyclic covering is diffeomorphic to the product $N \times \matR$, hence its fundamental group is simply
\[ \pi_1(\Tilde{M})= \spam{g_1,\ldots,g_5}. \]
Since there are no relations, the dimension of the space of the cocycles is simply $\dim(O(3,1))\cdot 5 = 30$.
Using Corollary \ref{B:cor}, we deduce that $\dim(B^1) = \dim(O(3,1)) = 6$. Hence, the dimension of the space of infinitesimal deformation of $\Tilde{M}$ is:
\[ \dim H^1(\pi_1(\Tilde{M}), \mathfrak{g}_{\agg \rho'}) = 30-6 = 24.\] 

We now use our algorithm to compute the same quantity.

We start by finding the holonomy of the octahedron. Then using Sage we obtain the combinatorics of $F_{[-1,1]}$. With this data we can run the algorithm and find the singular values of the linear system of size $352 \times 256$ defined as in Section \ref{System:subsubsection}. The singular values that we obtain are plotted in Figure \ref{svd_oct:fig}. Of the 256 singular values, 196 are greater than $10^{-1}$, and 60 are smaller than $10^{-14}$. The tolerance suggested by MATLAB is $10^{-12}$. The values that are greater than the tolerance are 11 order of magnitude greater and the ones smaller than the tolerance are 2 order of magnitude smaller. Notice also that the singular values smaller than the tolerance are close to the machine epsilon: they are only two order of magnitude greater. For these reasons, we consider this computation reliable.

\begin{figure}
 \begin{center}
  \includegraphics[width = 8cm]{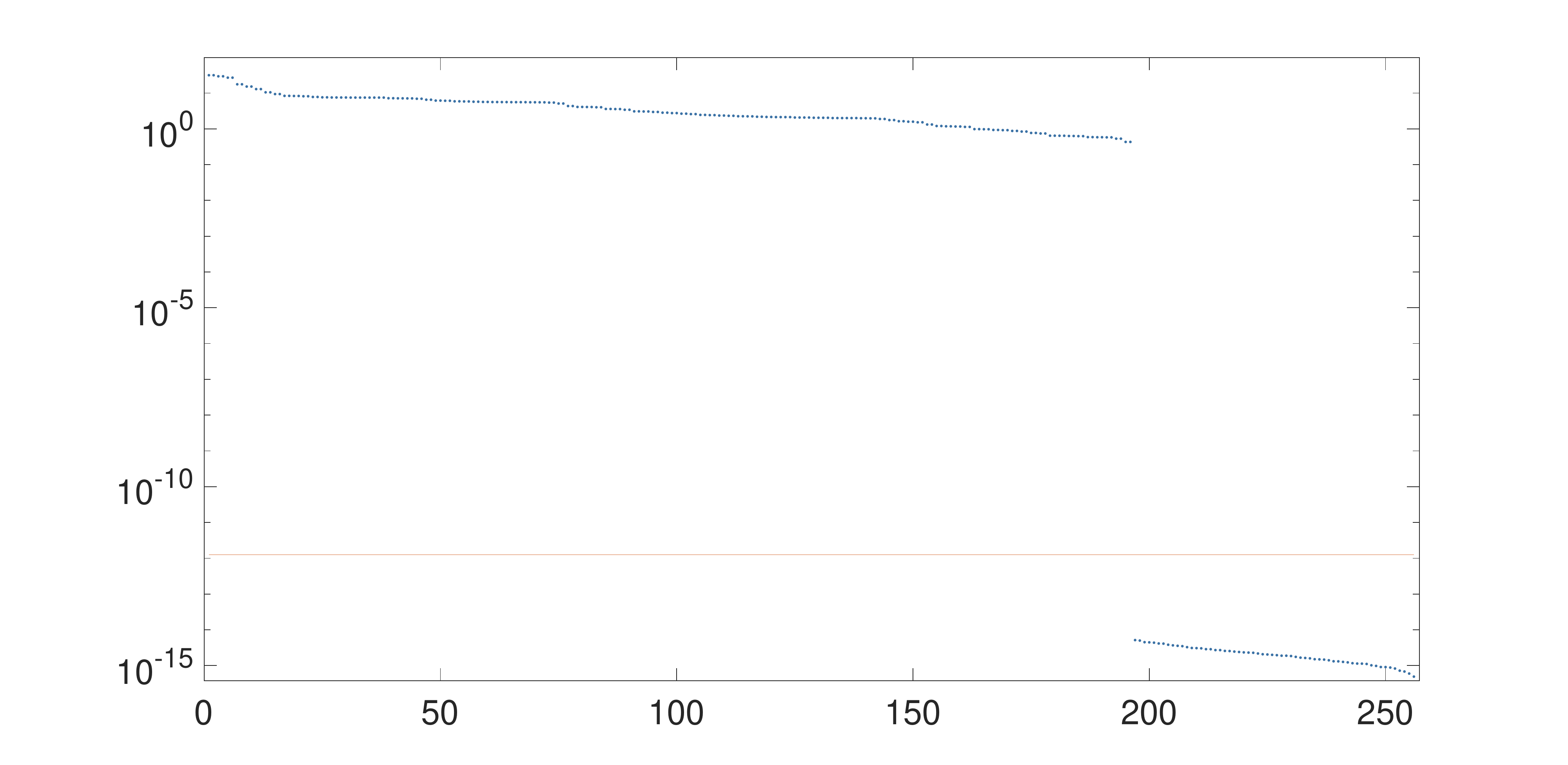}
 \end{center}
 \nota{In blue, the singular values of the linear system of the octahedron. In red, the tolerance suggested by MATLAB. The $y$-axis is in a logarithmic scale. The first 196 singular values are much greater than the tolerance, and the last 60 are much smaller than the tolerance. In every computation, the graph we get is very similar. }
 \label{svd_oct:fig}
\end{figure}

Once we get the value 60, we need to subtract the amount that comes from the fact that we are using the fundamental groupoid and the dimension of the coboundaries. Using Proposition \ref{rel_gruppoide_gruppo:prop} and Corollary \ref{B:cor} we estimate the dimension of the space of infinitesimal deformations with
\[ \dim(H^1(\pi_1(\widetilde{C},v_0)) \leq 60 - \#\set{\text{vertices of $F_{[-1,1]}$} }\cdot\dim(G)=60-6\cdot 6 = 24. \]

\begin{rem}
In this case the bound we find is sharp. However, with our algorithm we only get an upper bound for the dimension of the space of infinitesimal deformations.
This is because the map induced by the inclusion of $F_{[-1,1]}$ in $\Tilde{M}$ on the fundamental groups is surjective, but we do not know whether it is injective in general. However, when the upper bound is 0, we can conclude that $\Tilde{M}$ is infinitesimally rigid, as stated in Proposition \ref{teorema_subcom_finito}.
\end{rem}

\subsubsection{Dimension 4}\label{dim_4:section}

In dimension 4 we find new results using this machinery. Recall that we cannot have fibrations: this follows from the positiveness of the Euler characteristic. 

We start a right-angled polytope with a colouring and a state. We use the state to define an orientation on all edges of $C$ (in this section we follow the convention of \cite{BM}). In order to apply the algorithm we need all the ascending and descending links to be connected (recall Fact \ref{inclusion_pi1_surjective:fact}).

In \cite{BM} there are several examples that satisfy this condition. In particular we applied our algorithm in the following cases:
\begin{itemize}
    \item the polytope $P_4$ with the colouring and the state shown in \cite{BM}, Figures 4-5. In this case the manifold $M$ is the manifold $W$ described in \cite{BM}, Section 2.1;
    \item the $24$-cell with the colouring shown in \cite{BM}, Figure 6 and the 63 states described in \cite{BM}, Section 2.2. In this case the manifold $M$ is the manifold $X$ described in \cite{BM}, Section 2.2;
    \item the $120$-cell with the colouring and the state described in \cite{BM}, Section 2.4. In this case the manifold $M$ is the manifold $Z$ described in \cite{BM}, Section 2.4.
\end{itemize}
In every case we obtained the infinitesimal rigidity of the infinite cyclic covering. Every computations but one are numerical. With the 24-cell and one state (the most symmetric one, as described in \cite{BM}, Section 2.2.1 and shown in Figure \ref{24_cell_colostate:fig}), we carried out the computation symbolically. The results are shown in Tables \ref{C24:table}, \ref{C242:table}, and \ref{results:table}. In particular we proved:
\begin{teo}
The infinite cyclic covering of the manifold $X$ (described in \cite{BM}, Section 2.2) associated to the map induced by the status described in \cite{BM}, Section 2.2.1 is infinitesimally rigid.
\end{teo}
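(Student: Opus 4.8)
The plan is to apply the reduction machinery of Sections~\ref{infinite_cyclic_cover:section}--\ref{Infinitesimal_deformation:section} to the cube complex $\widetilde{C}$ dual to the tessellation of $\widetilde{M}$, turning the a priori infinite-dimensional computation of $H^1(\pi_1(\widetilde{M}),\mathfrak{g}_{\agg\rho})$ into a finite linear-algebra problem and then running that problem symbolically. First I would check the hypotheses of Corollary~\ref{Fnmsurj:cor} for the $24$-cell equipped with the colouring of \cite{BM}, Figure~6 and the state of \cite{BM}, Section~2.2.1: the orientation on the edges of $C$ is the one induced by a state in the sense of \cite{BM}, so $C$ is coherently --- in particular quasi-coherently --- oriented; $\dim M = 4 < 10$; and the ascending and descending links of every vertex of $C$ are connected, which is precisely the property recorded in \cite{BM} for this state (it is the hypothesis of Fact~\ref{inclusion_pi1_surjective:fact} and what makes the associated circle-valued Morse function perfect). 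Hence for any $s\geq 1$ the inclusion $i\colon F_{[-1,2s-1]}\hookrightarrow\widetilde{M}$ is $\pi_1$-surjective, and by Proposition~\ref{teorema_subcom_finito} it suffices to show that $\rho'=\rho\circ i_*$ is infinitesimally rigid.

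Next I would pass to the fundamental groupoid. Writing $V'$ for the vertex set of $F=F_{[-1,2s-1]}$ and $\sigma'=p_*$ for the groupoid representation sending each oriented edge of $F$ to the reflection $r_F$ along the corresponding facet of $P=\matH^4/\Gamma$, as in Section~\ref{Infinitesimal_deformation:section}, the fact that $\rho$ has limit set all of $S^3$ lets me combine Corollary~\ref{rel_gruppoide_gruppo_2:cor}, Proposition~\ref{rel_gruppoide_gruppo:prop} and Corollary~\ref{B:cor} into
\[
\dim H^1(\pi_1(F,v_0),\mathfrak{g}_{\agg\rho'})
\;=\;
\dim Z^1(\pi_1(F,V'),\mathfrak{g}_{\agg\sigma'})\;-\;10\,|V'|,
\]
since $\dim\mathfrak g=\dim{\rm O}^+(4,1)=10$. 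Thus the theorem is equivalent to the single numerical equality $\dim Z^1(\pi_1(F,V'),\mathfrak{g}_{\agg\sigma'})=10\,|V'|$.

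Finally I would compute $\dim Z^1$ as the nullity of the explicit linear system of Proposition~\ref{Dh:prop}, assembled exactly as in Section~\ref{System:subsubsection}: the unknowns are the matrices $D_i$ attached to the oriented edges of $F$, and the constraints are the tangency equations $D_i^t J M_i + (JM_i)^t D_i = 0$ together with one four-term matrix equation per square of $F$, simplified using $M_i=M_i^{-1}$, the coincidence of opposite sides of each square, and the commutation $M_1M_2=M_2M_1$ of adjacent reflections. Since the reflections $r_F$ of the right-angled $24$-cell have algebraic entries, the system and its rank can be handled by exact symbolic computation; carrying this out --- already for $F_{[-1,1]}$, which satisfies $n-m=2$ --- gives $\dim Z^1 = 10\,|V'|$, hence $\dim H^1(\pi_1(F,v_0),\mathfrak{g}_{\agg\rho'})=0$, and Proposition~\ref{teorema_subcom_finito} concludes. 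I expect the main obstacle to be the sheer size of the symbolic rank computation --- the system has on the order of $(n+1)^2\, n_{\text{facets}}\, 2^{c-1}$ unknowns --- so the groupoid reduction and the reflection simplifications, which eliminate every matrix inversion, are exactly what make the exact computation feasible; a minor point, already noted in the remark above, is that this method only bounds $\dim H^1$ from above, but the bound $0$ is all that infinitesimal rigidity requires.
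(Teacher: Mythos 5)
Your proposal is essentially the paper's own argument, laid out in the same order: reduce to the finite subcomplex $F_{[-1,2s-1]}$ via Corollary~\ref{Fnmsurj:cor} (after noting that a state à la \cite{BM} gives a coherent, hence quasi-coherent, orientation with connected ascending and descending links), pass to the fundamental groupoid $\pi_1(F,V')$ with representation $\sigma'=p_*$ sending edges to reflections of $\Gamma$, assemble the linear system of Proposition~\ref{Dh:prop} exactly as in Section~\ref{System:subsubsection}, compute $\dim Z^1$ by a symbolic rank calculation, and deduce rigidity from Proposition~\ref{teorema_subcom_finito}. Your bookkeeping identity $\dim H^1(\pi_1(F,v_0),\mathfrak g_{\agg\rho'})=\dim Z^1(\pi_1(F,V'),\mathfrak g_{\agg\sigma'})-\dim G\cdot|V'|$ is the same count the paper performs (combining Proposition~\ref{rel_gruppoide_gruppo:prop} and Corollary~\ref{B:cor}), and you correctly observe, as the tables confirm for this particular state, that the computation already succeeds on $F_{[-1,1]}$. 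The one small point left tacit by both you and the paper is why the limit set of $\rho(\pi_1(\widetilde M))$ is all of $S^3$ (needed for Proposition~\ref{B:prop} / Corollaries~\ref{B:cor}, \ref{rel_gruppoide_gruppo_2:cor}): this holds because $\pi_1(\widetilde M)=\ker f_*$ is an infinite normal subgroup of the non-elementary lattice $\pi_1(M)$, so it shares its limit set; it would be worth stating this explicitly rather than asserting the fact.
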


Notice that in all the cases considered in this section the function $f \colon C \to S^1$ can be extended on $M$ and is homotopic to a perfect circle-valued Morse function (see \cite{BM}).

\begin{table}
\begin{center}
\begin{tabular}{cc || cc || cc}
N & Volume & $\dim(H^1(F_{[-1,1]})$ & $\sfrac{\Sigma_{\text{min}}}{\sigma_{\text{max}}}$ & $\dim(H^1(F_{[-1,3]})$ & $\sfrac{\Sigma_{\text{min}}}{\sigma_{\text{max}}}$ \\
\hline
        1 & 194.869 & 0 & Symbolic & 0 & 7.235622e+12 \\ 
        2 & 189.473 & 0 & 2.397118e+12 & 0 & 6.148398e+12 \\ 
        3 & 186.874 & 0 & 2.335225e+12 & 0 & 6.532320e+12 \\ 
        4 & 186.34 & 1 & 3.543064e+12 & 0 & 7.089249e+12 \\ 
        5 & 185.307 & 0 & 3.202601e+12 & 0 & 6.263295e+12 \\ 
        6 & 185.035 & 0 & 3.660549e+12 & 0 & 4.937802e+12 \\ 
        7 & 184.813 & 0 & 2.220017e+12 & 0 & 4.920229e+12 \\ 
        8 & 184.301 & 7 & 5.933310e+12 & 0 & 7.657107e+12 \\ 
        9 & 184.067 & 3 & 4.489610e+12 & 0 & 5.853079e+12 \\ 
        10 & 183.873 & 0 & 2.429848e+12 & 0 & 5.587981e+12 \\ 
        11 & 183.867 & 0 & 3.748772e+12 & 0 & 5.742814e+12 \\ 
        12 & 183.544 & 2 & 2.986599e+12 & 0 & 6.617660e+12 \\ 
        13 & 183.437 & 1 & 3.461792e+12 & 0 & 6.452055e+12 \\ 
        14 & 183.393 & 0 & 2.532459e+12 & 0 & 5.506283e+12 \\ 
        15 & 183.122 & 0 & 3.923543e+12 & 0 & 7.137961e+12 \\ 
        16 & 182.36 & 1 & 3.219696e+12 & 0 & 5.422404e+12 \\ 
        17 & 182.281 & 1 & 3.659665e+12 & 0 & 4.927091e+12 \\ 
        18 & 182.171 & 0 & 2.146996e+12 & 0 & 4.765655e+12 \\ 
        19 & 181.283 & 0 & 3.882594e+12 & 0 & 4.002973e+12 \\ 
        20 & 181.127 & 0 & 3.311863e+12 & 0 & 4.097215e+12 \\ 
        21 & 181.025 & 0 & 2.291793e+12 & 0 & 4.734293e+12 \\ 
        22 & 180.934 & 1 & 2.509777e+12 & 0 & 5.412036e+12 \\ 
        23 & 180.825 & 2 & 1.236372e+12 & 0 & 6.850040e+12 \\ 
        24 & 180.661 & 1 & 3.918858e+12 & 0 & 6.296845e+12 \\ 
        25 & 180.451 & 1 & 4.446158e+12 & 0 & 6.131600e+12 \\ 
        26 & 180.387 & 0 & 1.875859e+12 & 0 & 3.726183e+12 \\ 
        27 & 180.331 & 1 & 3.638321e+12 & 0 & 6.280977e+12 \\ 
        28 & 180.248 & 0 & 3.738838e+12 & 0 & 6.589169e+12 \\ 
        29 & 180.128 & 0 & 2.809597e+12 & 0 & 4.884660e+12 \\ 
        30 & 179.869 & 0 & 2.775650e+12 & 0 & 5.124980e+12 \\ 
        31 & 179.754 & 0 & 2.232866e+12 & 0 & 5.923731e+12 \\ 
        32 & 179.657 & 1 & 2.477102e+12 & 0 & 4.884932e+12 \\

\end{tabular}
\vspace{.2 cm}
\nota{The numerical results on the 63 states on the 24-cell. Each state gives rise to a function $f \colon C \to S^1$. To distinguish these states we use the volume of the singular fiber $M^{\rm sing}$ described in \cite{BM}. The second column represents the volume of $M^{\rm sing}$. The third and the fourth are the dimension of the infinitesimal deformations calculated using MATLAB and the gap of the singular values of the system obtained using $F_{[-1,1]}$. The last two are the corresponding values obtained using $F_{[-1,3]}$. In the Tables 2-3 of \cite{BM} more information about $M^{\rm sing}$ can be found.}
\label{C24:table}
\end{center}
\end{table}

\begin{table}
\begin{center}
\begin{tabular}{cc || cc || cc}
N & Volume & $\dim(H^1(F_{[-1,1]})$ & $\sfrac{\Sigma_{\text{min}}}{\sigma_{\text{max}}}$ & $\dim(H^1(F_{[-1,3]})$ & $\sfrac{\Sigma_{\text{min}}}{\sigma_{\text{max}}}$ \\
\hline
        33 & 179.181 & 0 & 3.483970e+12 & 0 & 4.317760e+12 \\ 
        34 & 178.903 & 1 & 1.508899e+12 & 0 & 4.108744e+12 \\ 
        35 & 178.796 & 2 & 2.522961e+12 & 0 & 5.228315e+12 \\ 
        36 & 178.71 & 0 & 3.393137e+12 & 0 & 4.622471e+12 \\ 
        37 & 178.55 & 1 & 2.436939e+12 & 0 & 5.807914e+12 \\ 
        38 & 178.498 & 2 & 2.529432e+12 & 0 & 3.768823e+12 \\ 
        39 & 178.355 & 0 & 3.521648e+12 & 0 & 3.000208e+12 \\ 
        40 & 178.322 & 0 & 1.907534e+12 & 0 & 3.418561e+12 \\ 
        41 & 177.899 & 0 & 3.774676e+12 & 0 & 4.809060e+12 \\ 
        42 & 177.794 & 0 & 2.519382e+12 & 0 & 5.130856e+12 \\ 
        43 & 177.552 & 3 & 2.755276e+12 & 0 & 6.341695e+12 \\ 
        44 & 177.363 & 0 & 3.157805e+12 & 0 & 5.056077e+12 \\ 
        45 & 177.25 & 0 & 3.070059e+12 & 0 & 5.920372e+12 \\ 
        46 & 177.111 & 1 & 3.973227e+12 & 0 & 6.168683e+12 \\ 
        47 & 176.982 & 1 & 2.323239e+12 & 0 & 4.127369e+12 \\ 
        48 & 176.899 & 0 & 3.272597e+12 & 0 & 4.647325e+12 \\ 
        49 & 175.422 & 0 & 2.507132e+12 & 0 & 4.161510e+12 \\ 
        50 & 175.17 & 1 & 3.826100e+12 & 0 & 5.070140e+12 \\ 
        51 & 175.085 & 0 & 2.458223e+12 & 0 & 3.874525e+12 \\ 
        52 & 174.082 & 0 & 2.269447e+12 & 0 & 4.997747e+12 \\ 
        53 & 173.808 & 0 & 2.382119e+12 & 0 & 3.321352e+12 \\ 
        54 & 173.331 & 1 & 3.010625e+12 & 0 & 5.122391e+12 \\ 
        55 & 173.211 & 0 & 2.826802e+12 & 0 & 4.510913e+12 \\ 
        56 & 172.693 & 0 & 2.840796e+12 & 0 & 3.793823e+12 \\ 
        57 & 172.582 & 0 & 2.119408e+12 & 0 & 3.818880e+12 \\ 
        58 & 172.161 & 0 & 2.778001e+12 & 0 & 3.560645e+12 \\ 
        59 & 171.484 & 1 & 2.398577e+12 & 0 & 4.459881e+12 \\ 
        60 & 170.918 & 1 & 2.293547e+12 & 0 & 2.133791e+12 \\ 
        61 & 166.466 & 1 & 1.581402e+12 & 0 & 1.718246e+12 \\ 
        62 & 163.95 & 1 & 2.274303e+12 & 0 & 2.227807e+12 \\ 
        63 & 154.991 & 3 & 2.094852e+12 & 0 & 2.104681e+12 \\ 
        
\end{tabular}
\vspace{.2 cm}
\nota{The numerical results on the 63 states on the 24-cell. Each state gives rise to a function $f \colon C \to S^1$. To distinguish these states we use the volume of the singular fiber $M^{\rm sing}$ described in \cite{BM}. The second column represents the volume of $M^{\rm sing}$. The third and the fourth are the dimension of the infinitesimal deformations calculated using MATLAB and the gap of the singular values of the system obtained using $F_{[-1,1]}$. The last two are the corresponding values obtained using $F_{[-1,3]}$. In the Tables 2-3 of \cite{BM} more information about $M^{\rm sing}$ can be found (continue).}
\label{C242:table}
\end{center}
\end{table}

\begin{table}
\begin{center}
\begin{tabular}{c | c | cc }

Polytope & Size system & $\dim(H^1(F_{[-1,1]})$ & $\sfrac{\Sigma_{\text{min}}}{\sigma_{\text{max}}}$  \\

\hline

$P_4$ & $7000 \times 4000 $ & 0 & 1.2631e+12 \\
120-cell & $120000 \times 48000$ & 0 & 2.2308e+10 \\
$P_5$ & $ 175104 \times 73728 $ & 0 & Symbolic  \\

\end{tabular}
\vspace{.2 cm}
\nota{The results on the others right-angled polytopes. The first column describes the polytope. The second column represents the size of the linear system given to MATLAB, associated with $F_{[-1,1]}$. The third and the fourth ones contain the dimension of the infinitesimal deformations calculated using MATLAB and the gap of the singular values of the system.}
\label{results:table}
\end{center}
\end{table}

\begin{figure}
 \begin{center}
  \includegraphics[width = 8cm]{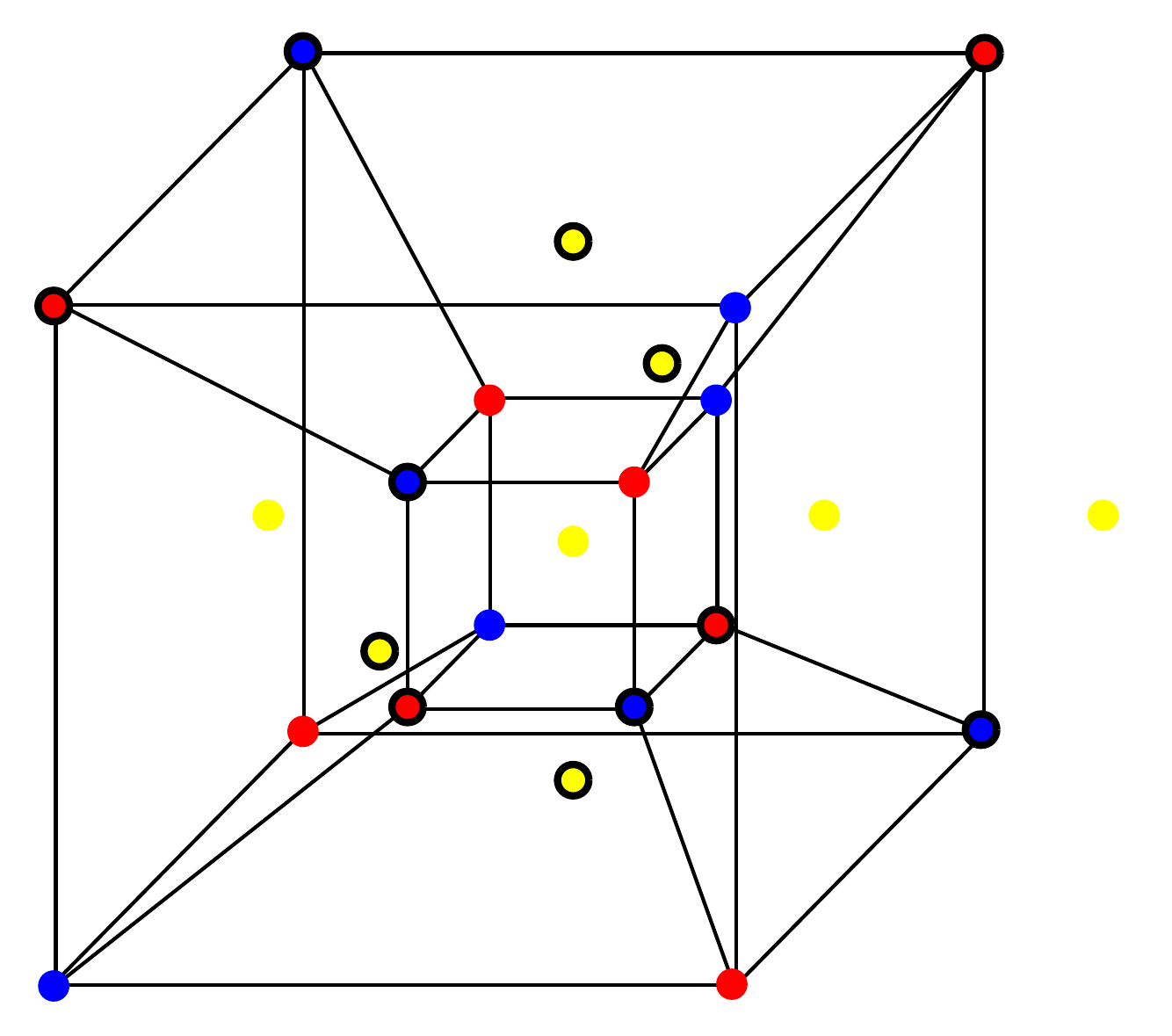}
 \end{center}
 \nota{The colouring and the state on the 24-cell for which we made out symbolic computations.}
 \label{24_cell_colostate:fig}
\end{figure}

\begin{figure}
 \begin{center}
  \includegraphics[width = 12cm]{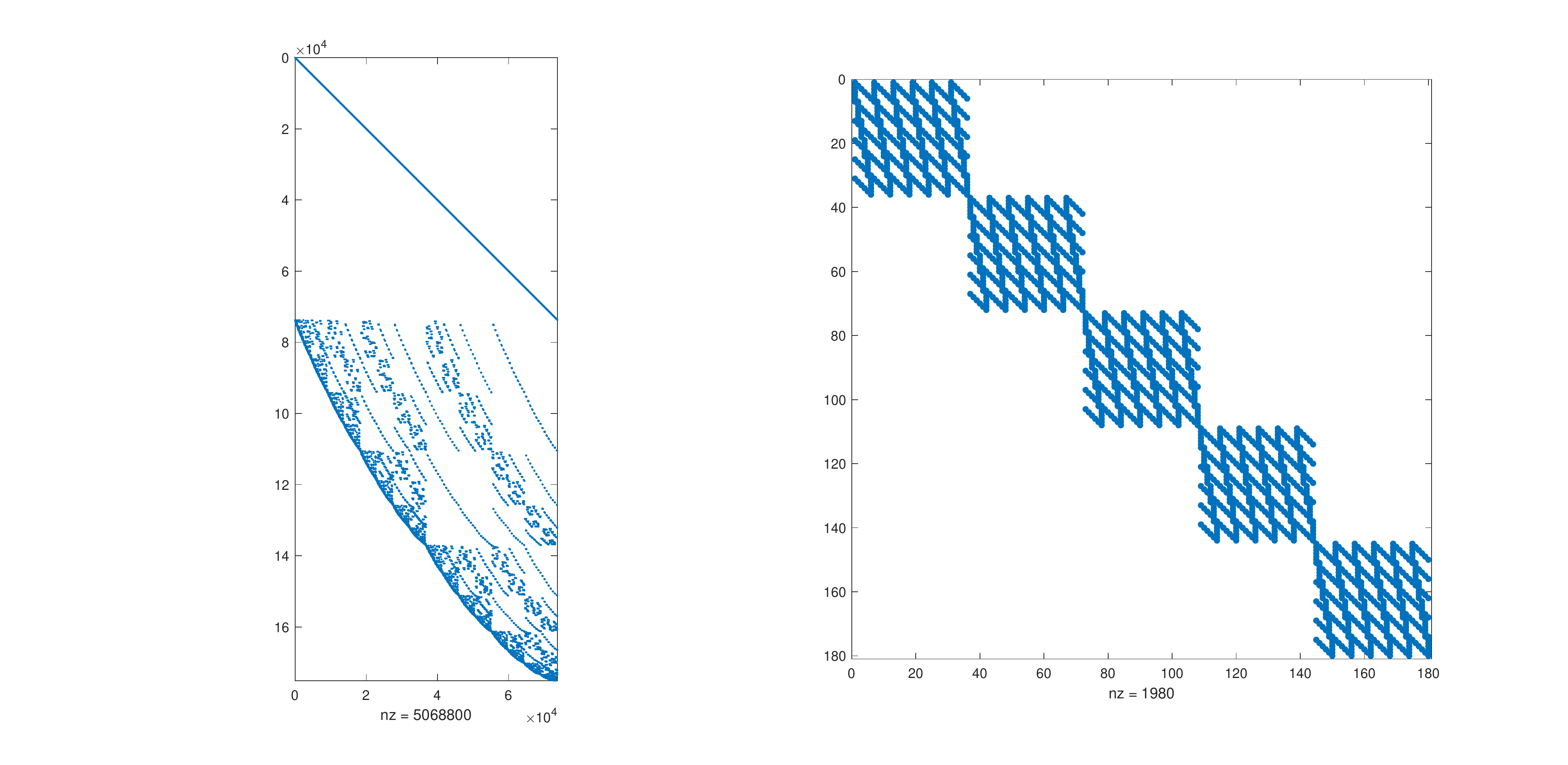}
 \end{center}
 \nota{The pictures obtained by using the MATLAB function \emph{spy} on the $ 175104 \times 73728 $ linear system coming from $P_5$ (left) and on its principal minor $180 \times 180$ (right). In the left picture we notice clearly the distinction between the equations given by the tangency conditions and the ones given by the squares. Given a $m \times n$ matrix $M$, the function \emph{spy($M$)} creates a $m \times n$ white grid and colours the element $(a,b)$ of the grid if and only if $M_{a,b}$ is not 0.}
 \label{Matrix_and_minor_spy:fig}
\end{figure}
\subsubsection{Dimension 5}\label{dim_5:section}
In \cite{itamarmig}, Italiano, Martelli, and Migliorini found an interesting example of fibration $f \colon M \to S^1$ in dimension 5. We can apply our algorithm to their construction to prove the infinitesimal rigidity of the associated infinite cyclic covering.

Following their construction, we use the polytope $P_5$, the paired colouring shown in \cite{itamarmig}, Figure 3 and the state shown in \cite{itamarmig}, Figure 9. To define the orientation on the edges of $C$ we use the convention in \cite{itamarmig}, Section 1.6.

The linear system associated with $F_{[-1,1]}$ has size $175104 \times 73728$. Trying to compute the rank of the symbolic linear system built as in Section \ref{System:subsubsection} made MATLAB freeze. In order to compute its exact rank, we had to simplify the system using its structure, see Figure \ref{Matrix_and_minor_spy:fig}. The manipulations derive from linear algebra considerations and the details can be found in \cite{code}. In particular we proved the following:
\begin{teo}
The infinite cyclic covering of the manifold $M^5$ associated to the map $f$ (both described in \cite{itamarmig}, Section 1) is infinitesimally rigid.
\end{teo}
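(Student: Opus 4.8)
The strategy is to reduce the assertion, through the finite subcomplex $F_{[-1,1]}$, to a single exact linear-algebra computation, and then to perform that computation symbolically. First I would record that the combinatorial data of \cite{itamarmig} — the polytope $P_5$, its paired colouring, and the chosen state — satisfy the hypotheses of Corollary \ref{Fnmsurj:cor}. Indeed the induced orientation on the edges of the cube complex $C$ is quasi-coherent in the sense of Definition \ref{quasi_coherent:def} (this is precisely the situation of \cite{itamarmig}: each square of $C$ is coherently oriented or bad), all ascending and descending links of vertices of $C$ are connected so that Fact \ref{inclusion_pi1_surjective:fact} applies, and $\dim M^5 = 5 < 10$. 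Hence the inclusion $i\colon F_{[-1,1]}\hookrightarrow \widetilde M$ induces a surjection $i_*\colon \pi_1(F_{[-1,1]},v_0)\twoheadrightarrow \pi_1(\widetilde M,v_0)$, and by Proposition \ref{teorema_subcom_finito} it suffices to prove that the pulled-back representation $\rho' = \rho\circ i_*$ satisfies $H^1(\pi_1(F_{[-1,1]},v_0),\mathfrak g_{\agg\rho'}) = 0$.

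Next I would pass to the fundamental groupoid $\pi_1(F_{[-1,1]},V')$, where $V'$ is the vertex set of $F_{[-1,1]}$, and to the groupoid representation $\sigma' = p_*$ coming from the orbifold covering $p\colon \widetilde M\to P_5 = \matH^5/\Gamma$, which sends each edge-generator $g_i$ to the reflection $M_i = r_F$ along the corresponding facet $F$ of $P_5$. By the groupoid version of Proposition \ref{Dh:prop}, the space $Z^1(\pi_1(F_{[-1,1]},V'),\mathfrak g_{\agg\sigma'})$ is the solution space of the explicit linear system consisting of the tangency equations $D_i^t J M_i + (J M_i)^t D_i = 0$ together with one four-term relation $D_1 M_1 - M_1 M_2 D_2 M_1 + M_1 M_2 D_3 M_2 - D_4 M_2 = 0$ per square of $F_{[-1,1]}$, where one has already used $M_i = M_i^{-1}$, the identification of opposite edges of a square, and the commutation of reflections along adjacent facets of a right-angled polytope. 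Since $\pi_1(\widetilde M)$ is a non-trivial normal subgroup of the non-elementary lattice $\pi_1(M^5)$, its limit set is all of $S^4$; thus Proposition \ref{B:prop}, Corollary \ref{B:cor} and Corollary \ref{rel_gruppoide_gruppo_2:cor} give $\dim H^1(\pi_1(F_{[-1,1]},v_0),\mathfrak g_{\agg\rho'}) = \dim Z^1(\pi_1(F_{[-1,1]},V'),\mathfrak g_{\agg\sigma'}) - \dim(G)\cdot|V'|$ with $\dim G = n(n+1)/2 = 15$. So the theorem comes down to checking that the nullity of the linear system above is exactly $15\,|V'|$, equivalently that its rank attains the largest value $(n+1)^2\, n_{\text{facets}}\, 2^{c-1} - 15\,|V'|$ compatible with the unavoidable contributions of the coboundaries and of the chosen connecting arcs.

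Finally I would compute this rank over $\matQ$ with exact arithmetic. The system for $F_{[-1,1]}$ has size roughly $175104\times 73728$, and a direct symbolic rank computation in MATLAB does not terminate; the main obstacle is to replace it by an equivalent, much smaller system of the same rank. The plan is to exploit the block structure visible in Figure \ref{Matrix_and_minor_spy:fig} — the tangency block versus the square block — together with elementary, rank-preserving manipulations justified by the special form of the matrices $M_i$ (they are involutions, and commute when the facets are adjacent), eliminating variables block by block; the explicit sequence of reductions is carried out in \cite{code}. Once the reduced system is in hand, one computes its rank symbolically, verifies that it equals the value above so that $H^1(\pi_1(F_{[-1,1]},v_0),\mathfrak g_{\agg\rho'}) = 0$, and concludes by Proposition \ref{teorema_subcom_finito} that the holonomy $\rho$ of the infinite cyclic covering $\widetilde M$ is infinitesimally rigid. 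I expect the rank-preserving reduction to be the delicate step, since it must be simultaneously valid and aggressive enough to bring a system of this size within reach of exact symbolic linear algebra.
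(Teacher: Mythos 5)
Your proposal is correct and follows essentially the same route as the paper: reduce to the finite subcomplex $F_{[-1,1]}$ via Corollary \ref{Fnmsurj:cor} and Proposition \ref{teorema_subcom_finito}, pass to the fundamental groupoid with $\sigma' = p_*$ valued in facet reflections, assemble the linear system from Proposition \ref{Dh:prop} (with the $M_i = M_i^{-1}$, opposite-edge and commutation simplifications), subtract the $\dim(G)\cdot|V'|$ contribution accounted for by Corollaries \ref{B:cor} and \ref{rel_gruppoide_gruppo_2:cor}, and compute the rank symbolically after exploiting the block structure to reduce the $175104\times 73728$ system to a tractable size. This is precisely what the paper does, including the identification of the symbolic rank reduction as the delicate computational step deferred to \cite{code}.
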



\section{Related Results and Open Questions}\label{questions:section}

The results we found suggest several patterns that we discuss in this section.

\subsection{Ignoring relations}
It appears that it is often enough to use $F_{[-1,1]}$ to prove the rigidity of the manifold $\widetilde{M}$. The algorithm applied to this specific subcomplex can be interpreted in a nice way. Here we elaborate on this aspect.

We want to compare the algorithm applied to $F_{[-1,1]}$ with the algorithm applied to $C$, the cube complex on which the finite-volume manifold $M$ retracts. The number of vertices of $F_{[-1,1]}$ is $\frac32$ times the number of vertices of $C$: this is because the odd vertices have two lifts in $F_{[-1,1]}$ while the even vertices have only one lift. Let $V$ be the set of vertices of $C$ and $V'$ be the set of vertices of $F_{[-1,1]}$. The groupoids $\pi_1(C, V)$ and $\pi_1(F_{[-1,1]}, V')$ have the same number of generators: this holds because every edge in $C$ has exactly one lift in $F_{[-1,1]}$. If we look at the squares (that corresponds to relations in the groupoid), some squares of $C$ have one lift in $F_{[-1,1]}$ and some of them have zero lifts in $F_{[-1,1]}$, see Figure \ref{sqares_oct:fig}. The ones that have no lift in $F_{[-1,1]}$ are the ones whose lifts connect two even vertices that have different level. This means that the presentations of the groupoids $\pi_1(C, V)$ and $\pi_1(F_{[-1,1]}, V')$ differ only by a certain number of relations, that appear in $\pi_1(C, V)$ and do not appear in $\pi_1(F_{[-1,1]}, V')$.

When we build the linear system associated to $\pi_1(C, V)$, by the Mostow rigidity we know that the dimension of the kernel must be $\dim(G) \cdot \#V$: this is because the manifold $M$ has finite volume. With some states (the ones that made us able to prove the rigidity by looking at $F_{[-1,1]}$), ignoring the relations given by the squares that have no lift in $F_{[-1,1]}$ raised the dimension of the kernel to $\dim(G) \cdot \frac32 \cdot  \#V$. In other cases the kernel became greater (in these cases we needed to consider $F_{[-1,3]}$ to find rigidity, see Tables \ref{C24:table} and \ref{C242:table}).

\begin{quest}\label{behaviour_relations:quest}
Is there any nice way to distinguish between the states such that the complex $F_{[-1,1]}$ is enough to prove rigidity and the other ones?
\end{quest}

\subsection{Always rigid?}
In the papers \cite{BM,itamarmig} there are several examples where $\ker(f_*)$ is finitely generated. In some of these cases (the ones shown in Sections \ref{dim_4:section} and \ref{dim_5:section}) it was possible to apply our method, and we were always able to prove (or to obtain strong numerical evidence in favour of the fact) that the hyperbolic structure was infinitesimally rigid. Hence, it is quite natural to conjecture the following:

\begin{conj}\label{rigidity_always:conj}
Let $M$ be a finite volume hyperbolic manifold in dimension greater of equal than $4$. Let $f \colon M \to S^1$ be a non-homotopically trivial smooth map such that $\ker(f_*)$ is finitely generated, where $f_*$ is the map induced on the fundamental groups. Then the cyclic covering associated to the subgroup $\ker(f_*)$ is infinitesimally rigid.
\end{conj}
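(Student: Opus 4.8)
The plan is to reduce Conjecture \ref{rigidity_always:conj}, via a Wang exact sequence together with the local rigidity of finite-volume hyperbolic manifolds, to a vanishing statement for the twisted cohomology of the cyclic cover itself, and then to attack that statement by Hodge-theoretic methods that exploit the $\matZ$-symmetry. Write $\Gamma=\pi_1(M)$, let $\bar\rho\colon\Gamma\to G=\Isom(\matH^n)$ be the holonomy of $M$, let $N=\ker(f_*)=\pi_1(\widetilde{M})$ and $\rho=\bar\rho|_N$. Then $N\trianglelefteq\Gamma$ with $\Gamma/N\cong\matZ$, generated by the class of the deck transformation $\tau$. By hypothesis $N$ is finitely generated, so $W:=H^1(N,\mathfrak g_{\agg\rho})$ is finite-dimensional; the goal is to show $W=0$, since then $\widetilde{M}$ is infinitesimally rigid.

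First I would record that the limit set of $\rho(N)$ is all of $S^{n-1}$: it is a closed $\bar\rho(\Gamma)$-invariant subset of the limit set of the lattice $\bar\rho(\Gamma)$, which is $S^{n-1}$, and since the action of a lattice on its limit set is minimal and $N\neq 1$, equality holds. By Proposition \ref{B:prop} this yields $H^0(N,\mathfrak g_{\agg\rho})=0$ and $\dim B^1(N,\mathfrak g_{\agg\rho})=\dim G$, and the same for $\Gamma$. The Wang exact sequence of $1\to N\to\Gamma\to\matZ\to 1$ with coefficients in $\mathfrak g_{\agg\bar\rho}$ then reads
\[ 0\longrightarrow H^1(\Gamma,\mathfrak g_{\agg\bar\rho})\longrightarrow W \xrightarrow{\ \tau^*-\id\ } W \longrightarrow H^2(\Gamma,\mathfrak g_{\agg\bar\rho})\longrightarrow\cdots. \]
Since $M$ has finite volume and $n\geq 4$, the local rigidity of finite-volume hyperbolic $n$-manifolds (Garland--Raghunathan) gives $H^1(\Gamma,\mathfrak g_{\agg\bar\rho})=0$. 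Hence $\tau^*-\id$ is injective on the finite-dimensional space $W$, so it is an automorphism: the deck transformation acts on $W$ with no non-zero fixed vector.

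It remains to show that this forces $W=0$, i.e.\ that $H^1(N,\mathfrak g_{\agg\rho})$ vanishes — a local-rigidity statement for the geometrically infinite manifold $\widetilde{M}$. The route I would try is Hodge-theoretic: $\widetilde{M}$ is aspherical, so $W=H^1_{\mathrm{dR}}(\widetilde{M},\mathfrak g_{\agg\rho})$; complexifying and passing to a $\tau^*$-eigenvector with eigenvalue $\lambda\neq 1$ (possible by the previous step), one would produce a closed and co-closed $\mathfrak g$-valued $1$-form $\omega$ with $\tau^*\omega=\lambda\omega$. The cyclic cover carries a handle decomposition periodic under $\tau$ with handles of bounded index, hence is quasi-isometric to an infinite periodic "tube" whose cross-sections have finite topology; one would use this periodicity, together with $|\lambda|$ being fixed, to control the growth of $\omega$ along the fibring direction and conclude that $\omega$ is in fact $L^2$. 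Then $\omega=0$ by the Matsushima--Murakami/Raghunathan Bochner vanishing, whose curvature term is strictly positive for $n\geq 3$.

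The hard part — and the reason this remains a conjecture — is precisely this last step. No general rigidity theorem is available for geometrically infinite hyperbolic $n$-manifolds with finitely generated fundamental group in dimension $n\geq 4$, and the analytic input required above (the existence of a good $\tau$-equivariant representative on the open manifold, passing from a cohomology class represented by a form of a priori uncontrolled growth to a genuinely $L^2$ class, and running a Bochner argument on a manifold of infinite volume where integration by parts is not free) is not supplied by existing techniques; this is exactly why the present paper verifies the vanishing case by case by machine rather than proving it in general. A purely combinatorial alternative — an induction over a Bestvina--Brady/handle hierarchy of $N$ coming from $f$, reducing $H^1(N,\mathfrak g_{\agg\rho})$ by Mayer--Vietoris to the twisted cohomology of finite-volume, already-rigid pieces — runs into the difficulty that the gluings along the singular fibres $C^{\rm sing}_t$ are far from acylindrical and the adjoint coefficient system does not split across them, so controlling the connecting maps also seems to need new ideas.
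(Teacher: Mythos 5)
This statement is labeled a \emph{conjecture} in the paper (Conjecture~\ref{rigidity_always:conj}) and no proof is offered; the paper's evidence for it is the case-by-case machine verification of Section~\ref{numerical:section}. You correctly recognize this and, to your credit, do not claim to close the gap. Your partial reductions are sound: since $N=\ker(f_*)$ is an infinite normal subgroup of the lattice $\Gamma=\pi_1(M)$, its limit set is indeed all of $S^{n-1}$ (a closed $\Gamma$-invariant subset, minimality of the boundary action does the rest), so $H^0(N,\mathfrak g_{\agg\rho})=0$; and then the Wang (Lyndon--Hochschild--Serre) sequence for $1\to N\to\Gamma\to\matZ\to 1$, together with Garland--Raghunathan rigidity $H^1(\Gamma,\mathfrak g_{\agg\bar\rho})=0$ in dimension $n\geq 4$, gives that $\tau^*-\mathrm{id}$ is injective, hence an automorphism, on the finite-dimensional space $W=H^1(N,\mathfrak g_{\agg\rho})$. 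This is a genuine and useful observation: it shows the conjecture is equivalent to the absence of \emph{any} non-unit eigenvalue of $\tau^*$ on $W$, not just the absence of $1$.

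The genuine gap is exactly where you place it: no available analytic or combinatorial technique converts ``$\tau^*$ has no fixed vectors on $W$'' into ``$W=0$.'' The Matsushima--Murakami/Bochner route needs an $L^2$ harmonic representative, and on the infinite-volume manifold $\widetilde M$ there is no a priori control of growth; a $\tau^*$-eigenform with $|\lambda|\neq 1$ will typically grow or decay exponentially along the periodic direction, and even when $|\lambda|=1$ the passage from a de Rham class to an $L^2$ form on an open manifold is not free. The Mayer--Vietoris/hierarchy alternative you mention also stalls, for the reason you give. So what you have is not a proof but a clean reformulation plus an honest identification of the missing step, which is consistent with the paper leaving the statement as a conjecture and supporting it only numerically. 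One small caveat worth keeping in mind if you pursue the Wang-sequence angle further: the reduction uses that $W$ is finite-dimensional, which is exactly the hypothesis that $N$ be finitely generated (so this route does not obviously extend to the non--finitely-generated case, nor should one expect it to).
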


\bibliographystyle{plain}
\bibliography{references.bib}

\end{document}